\def\beq{\begin{equation}}
\def\eeq{\end{equation}}
\def\bea{\begin{eqnarray}}
\def\eea{\end{eqnarray}}
\def\bsp{\begin{split}}
\def\esp{\end{split}}
\def\am{\arg\min}
\def\ol{\overline}
\def\tfrac#1#2{{\textstyle \frac{#1}{#2}}}  
\def\cR{\mathcal{R}}
\def\cS{\mathcal{S}}
\def\N{{{\rm I}\!{\rm N}}}
\def\R{{{\rm I}\!{\rm R}}}
\def\be#1{\begin{equation} \label{#1} }
\def\ee{\end{equation}}
\def\barr{\begin{array}}
\def\earr{\end{array}}
\def\norm#1{\hspace{0.2ex} \left\|#1\right\|}
\def\ol#1{\overline{#1}}
\def\tfrac#1#2{{\textstyle \frac{#1}{#2}}}
\def\qdag{q^\dagger}
\def\L{{\cal L}}
\def\M{{\cal M}}
\newcommand{\normklein}[1]{\|{#1}\|}
\newcommand{\normQ}[1]{\norm{#1}_Q}
\newcommand{\normV}[1]{\norm{#1}_V} 
\newcommand{\normQklein}[1]{\normklein{#1}_Q}   
\newcommand{\normGklein}[1]{\normklein{#1}_G} 
\newcommand{\normVklein}[1]{\normklein{#1}_V} 
\newcommand{\normWklein}[1]{\normklein{#1}_W} 
\newcommand{\bF}{\mathbf{F}}
\newcommand{\bY}{\mathbf{Y}}
\newcommand{\normQV}[1]{\norm{#1}_{Q\times V}}      
 \newcommand{\idV}{\operatorname{id}}
 \newcommand{\idQ}{\operatorname{id}}
 \newcommand{\idQV}{\operatorname{id}}
\newcommand{\dualW}[2]{\langle#1,#2\rangle_{W^*,W}}    
\newcommand{\dualV}[2]{\langle#1,#2\rangle_{V^*,V}}    
\def\cR{\mathcal{R}}
\def\cS{\mathcal{S}}
\def\cT{\mathcal{T}}
\newcommand{\scalarQ}[2]{(#1,#2)_Q}                          
\newcommand{\scalarV}[2]{(#1,#2)_V}                          
\newcommand{\scalarG}[2]{(#1,#2)_G} 
\newcommand{\scalarWstern}[2]{(#1,#2)_{W^*}}
\newcommand{\normWstern}[1]{\norm{#1}_{W^*}}
\newcommand{\normWsternklein}[1]{\normklein{#1}_{W^*}}
\newcommand{\vektor}[2]{\begin{pmatrix}{#1}\\{#2}\end{pmatrix}}
\newcommand{\qold}{q_{\operatorname{old}}}
\newcommand{\uold}{u_{\operatorname{old}}}
\newcommand{\qkdelta}{q^{k,\delta}}
\newcommand{\qksterndelta}{q^{k_*,\delta}}
\newcommand{\uk}{u^k}
\newcommand{\ukdelta}{u^{k,\delta}}
\newcommand{\uksterndelta}{u^{k_*,\delta}}
\newcommand{\qh}{q_h}
\newcommand{\xh}{x_h}
\newcommand{\zh}{z_h}
\newcommand{\uh}{u_h}
\newcommand{\yh}{y_h}
\newcommand{\xeinsh}{x_h^1}
\newcommand{\qhzweik}{q^k_{h^2_k}}
\newcommand{\vhzweik}{v^k_{h^2_k}}
\newcommand{\zhkplus}{z_{h}^{k+1}}
\newcommand{\vhk}{v^k_h}
\newcommand{\uhk}{u^k_h}
\newcommand{\qoldnull}{q_{\operatorname{old}}^0}
\newcommand{\uoldnull}{u_{\operatorname{old}}^0}
\newcommand{\qhnullnull}{q^0_{h_0}}
\newcommand{\uhnullnull}{u^0_{h_0}}
\newcommand{\zhnullnull}{z^0_{h_0}}
\newcommand{\qhnull}{q^0_{h}}
\newcommand{\uhnull}{u^0_{h}}
\newcommand{\zhnull}{z^0_{h}}
\newcommand{\qhk}{q^k_h}
\newcommand{\qhkk}{q^k_{h_k}}
\newcommand{\uhkk}{u^k_{h_k}}
\newcommand{\pk}{p^k}
\newcommand{\qk}{q^k}
\newcommand{\zk}{z^k}
\newcommand{\qoldksterndeltal}{q_{\operatorname{old}}^{k_*(\delta_l)}}
\newcommand{\uoldksterndeltal}{u_{\operatorname{old}}^{k_*(\delta_l)}}
\newcommand{\qoldkstern}{q_{\operatorname{old}}^{k_*}}
\newcommand{\uoldkstern}{u_{\operatorname{old}}^{k_*}}
\newcommand{\qoldzwei}{q_{\operatorname{old}}^2}
\newcommand{\qolddrei}{q_{\operatorname{old}}^3}
\newcommand{\qoldvier}{q_{\operatorname{old}}^4}
\newcommand{\uoldk}{u_{\operatorname{old}}^k}
\newcommand{\uoldhk}{u_{\operatorname{old},h}^k}
\newcommand{\ukminuseins}{u^{k-1}}
\newcommand{\qoldk}{q_{\operatorname{old}}^k}
\newcommand{\qkminuseins}{q^{k-1}}
\newcommand{\uoldkplus}{u_{\operatorname{old}}^{k+1}}
\newcommand{\qoldkplus}{q_{\operatorname{old}}^{k+1}}
\newcommand{\qhkstern}{q_h^{k_*}}
\newcommand{\uhkstern}{u_h^{k_*}}
\newcommand{\qhkminuseinsk}{q^{k-1}_{h_{k-1}}}
\newcommand{\uhkminuseinsk}{u^{k-1}_{h_{k-1}}}
\newcommand{\Lk}{L_{k-1}}
\newcommand{\Kk}{K_{k-1}}
\newcommand{\Ck}{C_{k-1}}
\newcommand{\Lhk}{L_{h,k-1}}
\newcommand{\Khk}{K_{h,k-1}}
\newcommand{\Chk}{C_{h,k-1}}
\newcommand{\rg}{r^g_{k-1}}
\newcommand{\rf}{r^f_{k-1}}
\newcommand{\cadj}{c_{\operatorname{adj}}}
 \newcommand{\Ffett}{\mathbf{F}}
 \newcommand{\gfettdelta}{\mathbf{g}^\delta}
 \newcommand{\Ieinsh}{I_{1,h}}
 \newcommand{\Izweih}{I_{2,h}}
 \newcommand{\Idreih}{I_{3,h}}
 \newcommand{\Ivierh}{I_{4,h}}
 \newcommand{\Ieinshk}{I_{1,h}^k}
 \newcommand{\Izweihk}{I_{2,h}^k}
 \newcommand{\Idreihk}{I_{3,h}^k}
 \newcommand{\Ivierhk}{I_{4,h}^k}
\newcommand{\uttheta}{\tilde{\underline\theta}}
\newcommand{\ottheta}{\tilde{\overline \theta}}
\author{B.~Kaltenbacher \and A.~Kirchner \and B.~Vexler}
\title{Goal oriented adaptivity in the IRGNM for parameter identification in PDEs II:\\
all-at once formulations}
\newtheorem{cor}{Corollary}
\newtheorem{thm}{Theorem}
\newtheorem{lem}{Lemma}
\newtheorem{rem}{Remark}
\newtheorem{ass}{Assumption}
\newtheorem{algorithm}{Algorithm}{\bf}{\it}
\begin{document}

\maketitle

\abstract{
In this paper we investigate adaptive discretization of the iteratively regularized Gauss-Newton method IRGNM. All-at-once formulations considering the PDE and the measurement equation simultaneously allow to avoid (approximate) solution of a potentially nonlinear PDE in each Newton step as compared to the reduced form \cite{KKVV13}. We analyze a least squares and a generalized Gauss-Newton formulation and in both cases prove convergence and convergence rates with a posteriori choice of the regularization parameters in each Newton step and of the stopping index under certain accuracy requirements on four quantities of interest. Estimation of the error in these quantities by means of a weighted dual residual method is discussed, which leads to an algorithm for adaptive mesh refinement. Numerical experiments with an implementation of this algorithm show the numerical efficiency of this approach, which especially for strongly nonlinear PDEs outperforms the nonlinear Tikhonov regularization considered in \cite{KKV10}.
}

\section{Introduction}
We consider the problem of identifying a parameter $q$ in a PDE
\be{eq_PDE} 
A(q,u)=f
\ee
from measurements of the state $u$
\be{eq_meas}
C(u)=g\,,
\ee
where $q\in Q$, $u\in V$, $g\in G$, $Q,V,G$ are Hilbert spaces and $A\colon Q \times V\to W^*$
with $W^*$ denoting the dual space of some Hilbert space $W$ and $C\colon V\to G$ differential and observation operators, respectively.
Among many others, for example the classical model problem of identifying the diffusion coefficient $q$ in the linear elliptic PDE
$$ -\nabla (q \nabla u)=f \mbox{ in } \Omega$$
from measurements of $u$ in $\Omega$ can be cast in this form with $Q\subseteq L^\infty(\Omega)$, $V,W\subseteq H^1(\Omega)$, $G=L^2(\Omega)$, $A(q,u)=-\nabla (q \nabla u)$ and $C$ the embedding of $H^1(\Omega)$ into $L^2(\Omega)$.

The usual approach for tackling such inverse problems is to reduce them to an operator equation 
\be{OEq}
F(q) = g,
\ee
where $F=C\circ S$ is the composition of the parameter-to-solution map for \eqref{eq_PDE}
\be{eq_par2sol}
\begin{array}{rcl}
S\colon Q&\to&V\\
q&\mapsto&u
\end{array}
\ee
with the measurement operator $C$. 
The forward operator $F$ will then be a nonlinear operator between $Q$ and $G$ with typically unbounded inverse, so that recovery of $q$ is an ill-posed problem.
Since the given data $g^\delta$ are noisy with some noise level $\delta$
\be{eq_delta}
||g - g^\delta|| \leq \delta,
\ee
regularization is needed.

We will here as in \cite{KKVV13} consider the paradigm of the Iteratively Regularized Gauss-Newton Method (IRGNM) cf., e.g., 
\cite{Baku92,BakuKokurin,BNS94,HohageDiss,KH10,KNSBuch} and its adaptive discetization.
However, instead of reducing to \eqref{OEq}, we will simulteously consider the measurement equation and the PDE:
\begin{eqnarray}
C(u)&=&g \mbox{ in } G \label{eq_IRGNM_Cugdel0}\\ 
A(q,u)&=&f \mbox{ in } W^* \label{eq_IRGNM_Auqf0}
\end{eqnarray} 
as a system of operator equations for $(q,u)$, which we will abbreviate by
\beq\label{eq_IRGNM_Fqug}
\mathbf{F}(q,u) = \mathbf{g},
\eeq
where
\beq\label{eq_IRGNM_bfF}
  \mathbf{F}\colon Q\times V\to G\times W^*\,,\quad 
  \Ffett(q,u) = \vektor{C(u)}{A(q,u)}\,,\qquad \text{and} \qquad \mathbf{g} = \vektor{g}{f} \in G\times W^*\,.
\eeq
{ 
The noisy data for this all-at-once formulation is denoted by
\[
	\mathbf{g}^\delta = \vektor{g^\delta}{f} \in G\times W^*\,. 
\]}
This will allow us to avoid a major drawback of the method in \cite{KKVV13}, namely the necessity of solving the possibly nonlinear
PDE (to a certain precision) in each Newton step in order to evaluate $F(q)=C(S(q))$. 
Another key difference to the paper \cite{KKVV13} is that here the $u$ part of the previous iterate will not be subject to new discretization in the current iteration but keep its (usually coarser, hence cheaper) discretization from the previous step.

Therewith, we will arrive at iterations of the form 
\begin{eqnarray}
\lefteqn{(\qk, \uk)}\nonumber\\ 
&= \am_{q,u}& 
\varrho \|A'_q(\qkminuseins,\ukminuseins)(q-\qkminuseins)+A'_u(\qkminuseins,\ukminuseins)(u-\ukminuseins)
+A(\qkminuseins,\ukminuseins)-f\|_{W^*}^r
\nonumber\\
&&+\|C(\ukminuseins)+C'(\ukminuseins)(u-\ukminuseins)-g^\delta\|_G^2+ \alpha_{k} \|q-q_0\|_Q^2\,.
\label{IRGN_aao_var_intro}
\end{eqnarray}
with $\varrho>0$, $r\in\{1,2\}$.\\
For $r=2$,
this yields a least squares formulation, see Section \ref{subsec_IRGNM_ls}.\\
In case $r=1$ and $\varrho$ sufficiently large, by exactness of the norm with exponent one as a penalty, this leads to a Generalized Gauss-Newton type \cite{Bock} form of the IRGNM
\begin{eqnarray}
\lefteqn{(\qk, \uk)}\nonumber\\ 
&&= \am_{q,u} \|C(\ukminuseins)+C'(\ukminuseins)(u-\ukminuseins)-g^\delta\|_G^2+ \alpha_k \|q-q_0\|_Q^2
\nonumber\\
&&\mbox{ s.t. }A'_q(\qkminuseins,\ukminuseins)(q-\qkminuseins)+A'_u(\qkminuseins,\ukminuseins)
(u-\ukminuseins)+A(\qkminuseins,\ukminuseins)=f \mbox{
in }W^*\,.
\label{IRGN_sqp_intro}
\end{eqnarray}
see Section \ref{subsec_IRGNM_sqp}.
\begin{rem}
Although $\qk,\uk$ obviously depend on $\delta$, i.e.  $\qk = \qkdelta,\uk = \ukdelta$, 
we omit the superscript $\delta$ for better readability.
\end{rem}

All-at-once formulations have also been considered, e.g., in \cite{AscherHaber03,BirosGhattas04,BurgerMuehlhuberIP,BurgerMuehlhuberSINUM}, 
however, our approach focuses on adaptive discretization using a posteriori error estimators. Additionally it differs from the previous ones 
in the following sense: In \cite{BurgerMuehlhuberIP,BurgerMuehlhuberSINUM} a Levenberg-Marquardt approach is considered, whereas we work with 
an iterative regularized Gauss-Newton approach which allows us to also prove convegergence rates (which is an involved task in a Levenberg-Marquardt 
setting, that has been resolved only relatively recently, \cite{Hanke10}). Moreover we use a different regularization parameter choice in each Newton 
step than \cite{BurgerMuehlhuberIP,BurgerMuehlhuberSINUM}. The papers \cite{AscherHaber03,BirosGhattas04} put more emphasis on computational aspects and 
applications than we do here.

For both cases $r=1$, $r=2$ in \eqref{IRGN_aao_var_intro} we will investigate convergence and convergence rates in the continuous and adaptively discretized 
setting with discrepancy type choice of $\alpha_k$ (which in most of what follows will be replaced by $\frac{1}{\beta_k}$) and the overal stopping index $k_*$. The discretization errors with respect to certain quantities of interest will serve as refinement criteria during the Gauss-Newton iteration, where at the 
same time, we control the size of the regularization parameter. In order to estimate this discretization error we use goal-oriented error estimators (cf. \cite{BeckerRannacher,BeckerVexler}).

For the least squares case we will (for the sake of completeness but not in the main steam of this paper) also provide a result on convergence with a priori 
parameter choice in the continuous setting, see the appendix.
In Section \ref{sec_IRGNM_numericalresults}, we will provide numerical results and in Section \ref{sec_conclusions} some conclusions.

Throughout this paper, we will make the following assumptions:

\begin{ass}\label{ass_IRGNM_aao_qdagger}
There exists a solution $(q^\dagger,u^\dagger) \in  \mathcal{B}_{\rho}(q_0,u_0) \subset \mathcal{D}(A)\cap (Q\times\mathcal{D}(C))\subseteq Q\times V$ to 
\eqref{eq_IRGNM_Fqug}, where $(q_0,u_0)$ is some initial guess and $\rho$ (not to be confused with the penalty parameter $\varrho$ in \eqref{IRGN_aao_var_intro})
is the radius of the neighborhood in which local convergence of the Newton type iterations under consideration will be shown.
\end{ass}

\begin{ass}\label{ass_auinv} 
The PDE \eqref{eq_PDE} and especially also its linearization
at $(q,u)$ is uniquely and stably solvable.
\end{ass}

\begin{ass}\label{ass_IRGNM_GQnormevaluatedexactly}
The norms in $G$, $Q$, 
{  as well as the operator $C$ and the semilinear form $a\colon Q\times V\times W \to \R$ defined by the relation 
$a(q,u)(v) = \dualW{A(q,u)}{v}$ (where $\dualW{.}{.}$ denotes the duality pairing between $W^*$ and $W$)
are assumed to be evaluated exactly.}
\end{ass}

\section{A least squares formulation}\label{subsec_IRGNM_ls}

Direct application of the IRGNM to \eqref{eq_IRGNM_Cugdel0}, \eqref{eq_IRGNM_Auqf0}, i.e., to the all-at-once system \eqref{eq_IRGNM_Fqug}
yields the iteration
\begin{align}
\vektor{\qk}{\uk} 
& = \vektor{\qkminuseins}{\ukminuseins}
 - \left(\mathbf{F}'(\qkminuseins ,\ukminuseins )^* \mathbf{F}'(\qkminuseins ,\ukminuseins )
 + \left(\begin{array}{cc}\alpha_k \idQ&0\\0&\mu_k \idV\end{array}\right)\right)^{-1}\nonumber\\
& \quad \cdot \left(\mathbf{F}'(\qkminuseins ,\ukminuseins )^* (\mathbf{F}(\qkminuseins ,\ukminuseins )-\mathbf{g}^\delta) + 
\left(\begin{array}{c}\alpha_k(\qkminuseins -q_0)\\ \mu_k(\ukminuseins -u_0)\end{array}\right)\right)
\label{eq_IRGNM_aao_0}
\end{align}
with regularization parameters $\alpha_k$, $\mu_k$ for the $q$ and $u$ part of the iterates, respectively.

We will first of all show that Assumption \ref{ass_auinv} allows us to set the regularization parameter $\mu_k$ for the $u$ part to zero.
For this purpose, we introduce the abbreviations
\be{eq_IRGNM_KL} 
  K\colon V\to W^*\,,\quad K\coloneqq A'_u(q,u)\qquad\text{and}\qquad  L\colon Q\to W^*\,, \quad L\coloneqq A'_q(q,u)
\ee
with Hilbert space adjoints $K^*\colon W^*\to V$ and $L^*\colon W^*\to Q$, i.e.,  
\begin{eqnarray}\label{eq_IRGNM_Hspadj}
(Lq,w^*)_{W^*} = (q,L^*w^*)_Q \qquad 
\forall q\in Q,w^*\in W^*\,,
\nonumber\\
( Kv,w^*)_{W^*} = (v,K^*w^*)_V \qquad \forall v\in V,w^*\in W^*\,,
\end{eqnarray} 
where $(.,.)_{W^*}$ and $(.,.)_{V}$ denote the inner products in $W^*$ and $V$.

In the same way we define the Hilbert space adjoint $C'(u)^*\colon G\to V$ for $C'(u)\colon V\to G$, i.e.,
\[
(C'(u)(\delta u),\varphi)_{G} = (\delta u,C'(u)^*\varphi)_{V}\,,
\]
where $(.,.)_{G}$ denotes the inner product in $G$.

We denote the derivate of $\mathbf{F}$ at a pair $(q,u)$ by $\mathbf{T}$, i.e.,
\be{eq_IRGNM_T}
  \mathbf{T}\colon Q\times V\to G\times W^*\,,\quad 
  \mathbf{T} =\mathbf{F}'(q,u)=
 \left(\begin{array}{cc}0&C'(u)\\
 A'_q(q,u)&A'_u(q,u)\end{array}\right)
 = \left(\begin{array}{cc}0&C'(u)\\
 L &K\end{array}\right)
\ee
and define the norm
\be{eq_IRGNM_timesnormdef}
    \norm{\vektor{q}{u}}_{Q\times V}^2 \coloneqq \norm{q}_Q^2 + \norm{u}_V^2 \quad \text{and the operator norm} \quad 
    \norm{T}_{Q\times V}\coloneqq \sup_{x\in Q\times V,x\neq 0}
    \frac {\norm{Tx}_{Q\times V}}{\norm{x}_{Q\times V}}\,. 
\ee
for some $x\in Q\times V$ and some operator $T\colon Q\times V\to Q\times V$.

 Further we define 
 \be{eq_IRGNM_Ydef}
   \bY_{\alpha,\mu} \coloneqq \left(\mathbf{T}^*\mathbf{T}+
   \left(\begin{array}{cc}\alpha \idQ &0\\
   0&\mu \idV \end{array}\right)
   \right)
\ee
for $\alpha>0$, $\mu\ge 0$. 

\begin{lem}\label{lem_IRGNM_Talphamu}
Under Assumption \ref{ass_auinv}
\begin{itemize}
\item[(i)] for any $\alpha>0$, $\mu\geq0$ the inverse $\bY_{\alpha,\mu}^{-1}$ of $\bY_{\alpha,\mu}$ exists
\item[(ii)]
\[
\normQV{\bY_{\alpha,\mu}^{-1} \mathbf{T}^*\mathbf{T}}\le 1+\max\{\alpha,\mu\} \normQV{\bY_{\alpha,\mu}^{-1}}\,,
\]
\item[(iii)]
\beq\label{eq_IRGNM_Talphamu}
\normQV{\bY_{\alpha,\mu}^{-1}} 
\leq 
c_T \left(\frac 1  \alpha + 1 \right)
\eeq
for all $\alpha\in(0,1]$, $\mu\geq0$ and some $c_T>0$ independent of $\alpha,\mu$, where the bound
$c_T$ in \eqref{eq_IRGNM_Talphamu} is independent of $q$ and $u$, if the operators $K$, $K^{-1}$ and $L$, are bounded uniformly in $(q,u)$.
\end{itemize}
\end{lem}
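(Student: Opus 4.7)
My plan is to reduce the three claims to computing the block structure of $\mathbf{T}^*\mathbf{T}$ and then establishing a sharp coercivity estimate for $\bY_{\alpha,\mu}$.

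\textbf{Block form and quadratic form.} First I would write out
\[
\mathbf{T}^*\mathbf{T} = \begin{pmatrix} L^*L & L^*K \\ K^*L & C'(u)^*C'(u) + K^*K \end{pmatrix},
\]
so that for any increment $(q,u)\in Q\times V$,
\[
\bigl\langle \bY_{\alpha,\mu}(q,u),(q,u)\bigr\rangle_{Q\times V} = \|C'(u)u\|_G^2 + \|Lq+Ku\|_{W^*}^2 + \alpha\|q\|_Q^2 + \mu\|u\|_V^2,
\]
where the notation $C'(u)u$ stands for the action of the (fixed) linearization $C'$ on the increment. This expresses $\bY_{\alpha,\mu}$ as symmetric and nonnegative.

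\textbf{Parts (i) and (iii).} These both reduce to showing coercivity of $\bY_{\alpha,\mu}$ with an $\alpha$-dependent constant. By Assumption~\ref{ass_auinv}, $K$ has a bounded inverse, so $\|u\|_V \le \beta\|Ku\|_{W^*}$ for $\beta=\|K^{-1}\|$. The estimate $\|Ku\|\le\|Lq+Ku\|+\|L\|\|q\|$ combined with Young's inequality with a parameter $\varepsilon$ chosen as $\varepsilon=\|L\|^2/(\|L\|^2+\alpha/2)$ yields
\[
\|Lq+Ku\|_{W^*}^2+\tfrac{\alpha}{2}\|q\|_Q^2 \;\ge\; \frac{\alpha/2}{\|L\|^2+\alpha/2}\,\|Ku\|_{W^*}^2 \;\ge\; \frac{\alpha/(2\beta^2)}{\|L\|^2+\alpha/2}\,\|u\|_V^2.
\]
Keeping the remaining $\tfrac{\alpha}{2}\|q\|^2$ and using $\alpha\le 1$ to bound $\|L\|^2+\alpha/2\le \|L\|^2+\tfrac12$, I obtain $\langle \bY_{\alpha,\mu}(q,u),(q,u)\rangle\ge c\,\alpha\,\|(q,u)\|_{Q\times V}^2$ with $c$ depending only on $\|L\|$ and $\beta$, uniformly in $\mu\ge 0$. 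By Lax--Milgram this proves invertibility (i), and gives $\|\bY_{\alpha,\mu}^{-1}\|\le 1/(c\,\alpha)\le c_T(1/\alpha+1)$, which is (iii). Uniformity of $c_T$ in $(q,u)$ follows from the uniform bounds on $K$, $K^{-1}$ and $L$.

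\textbf{Part (ii).} This is a short algebraic step. Writing $\mathbf{T}^*\mathbf{T} = \bY_{\alpha,\mu} - D$ with $D=\mathrm{diag}(\alpha\,\idQ,\mu\,\idV)$ I get $\bY_{\alpha,\mu}^{-1}\mathbf{T}^*\mathbf{T}=\idQV-\bY_{\alpha,\mu}^{-1}D$, so the triangle inequality and $\|D\|_{Q\times V}=\max\{\alpha,\mu\}$ finish the bound.

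\textbf{Expected main obstacle.} The delicate point is part (iii): a naive Young splitting of the cross term $2(Lq,Ku)_{W^*}$ with a fixed $\varepsilon$ only yields coercivity once $\alpha>\|L\|^2$, which is useless for the small $\alpha$ regime that drives the IRGNM analysis. The key trick is to let $\varepsilon$ depend on $\alpha$ so that the $\|q\|^2$ penalty exactly absorbs the cross term, leaving a positive multiple of $\|Ku\|^2$ whose coefficient scales like $\Theta(\alpha)$ as $\alpha\to 0$; this is what produces the sharp $1/\alpha$ blow-up of $\|\bY_{\alpha,\mu}^{-1}\|$ and nothing worse.
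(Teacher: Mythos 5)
Your proof is correct, but it takes a genuinely different route from the paper. The paper proves (i) and (iii) by a block elimination: with $P=L^*L+\alpha\,\idQ$ and $M=C'(u)^*C'(u)+K^*K+\mu\,\idV$ it forms the Schur complement $N=P-L^*KM^{-1}K^*L$, shows $\scalarQ{Nq}{q}\ge\alpha\|q\|_Q^2$ via the estimate $\|M^{-1/2}K^*\|_{W^*\to V}\le 1$, writes down the explicit block inverse $O_{\alpha\mu}=\bY_{\alpha,\mu}^{-1}$, and then obtains (iii) from $\|N^{-1}\|_{Q\to Q}\le 1/\alpha$, $\|M^{-1}\|_{V\to V}\le\|K^{-1}\|_{W^*\to V}^2$ together with norm bounds on the four blocks. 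You instead prove coercivity of the whole operator directly from the quadratic form $\|C'(u)\delta u\|_G^2+\|L\delta q+K\delta u\|_{W^*}^2+\alpha\|\delta q\|_Q^2+\mu\|\delta u\|_V^2$, using the $\alpha$-dependent Young splitting to absorb the cross term and $\|K^{-1}\|$ to pass from $\|K\delta u\|_{W^*}$ to $\|\delta u\|_V$, and then invoke Lax--Milgram; I checked the key inequality $\|L\delta q+K\delta u\|_{W^*}^2+\tfrac\alpha2\|\delta q\|_Q^2\ge\tfrac{\alpha/2}{\|L\|^2+\alpha/2}\|K\delta u\|_{W^*}^2$ and it is valid (indeed sharp), and the resulting coercivity constant $c\,\alpha$ with $c$ depending only on $\|L\|$ and $\|K^{-1}\|$ gives (i) for all $\alpha>0$, $\mu\ge 0$ and (iii) with the required uniformity. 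Your argument is shorter and more elementary, avoiding the explicit inverse formula; what the paper's Schur-complement computation buys in exchange is structural information, namely an explicit representation of $\bY_{\alpha,\mu}^{-1}$ in which one sees that only the blocks containing $N^{-1}$ carry the $1/\alpha$ growth while the $u$-block is controlled by $\|K^{-1}\|^2$ uniformly in $\mu$ (the observation motivating the choice $\mu_k=0$); in your version this fact is present only implicitly, through the $\mu$-independence of the coercivity constant. Part (ii) is word-for-word the same argument as in the paper.
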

\begin{proof}
\begin{itemize}
\item[(i):]
With the abbreviations 
\[
P=L^*L + \alpha \idQ\quad \text{and} \quad  M=C'(u)^*C'(u)+K^*K +\mu \idV
\]
we have 
\[
\bY_{\alpha,\mu} 
=\left(\begin{array}{cc} P& L^*K\\
K^*L&M \end{array}\right)\,. 
\]
{  
Since Assumption \ref{ass_IRGNM_GQnormevaluatedexactly} implies that $K$ is invertible, $M^{-1}$ exists, such that we 
can define some kind of Schur complement
\[
  N\coloneqq P-L^*K M^{-1} K^*L =L^*L+\alpha \idQ -L^*K M^{-1} K^*L\,.
\]
We will now show that $N$ is also invertible. 
Using the fact that
\begin{align*}
\|M^{-1/2}K^*\|_{W^*\to V}^2&= \|KM^{-1/2}\|_{V\to W^*}^2\\
& = \sup_{v\in V, v\not=0}\frac{\|KM^{-1/2}v\|_{W^*}^2}{\|v\|_V^2}\\
&=\sup_{v\in V, v\not=0}\frac{\|Kv\|_{W^*}^2}{\|M^{1/2}v\|_V^2}\\
&= \sup_{v\in V, v\not=0}\frac{\|Kv\|_{W^*}^2}{\|C'(u)(v)\|_G^2+\|Kv\|_{W^*}^2 +\mu \|v\|_{V}^2}\leq 1\,.
\end{align*}
for any $q\in Q$ we get 
\begin{align*}
\scalarQ{Nq}{q}
& = \scalarQ{L^*Lq +\alpha q - L^*KM^{-1}K^*Lq}{q}\\
& \ge \|Lq\|_{W^*}^2+\alpha\|q\|_Q^2- \|M^{-1/2}K^*\|_{W^*\to V}^2 \|Lq\|_{W^*}^2\\
&\geq \alpha\|q\|_Q^2\,,
\end{align*}
which implies the existence of $N^{-1}$, since $M$ and therewith also $N$ is self-adjoint. 
For
}
\be{eq_IRGNM_Tinv}
O_{\alpha\mu}\coloneqq
\left(\begin{array}{cc} N^{-1}& -N^{-1}L^*KM^{-1}\\
-M^{-1}K^*LN^{-1}&M^{-1}+M^{-1}K^*LN^{-1}L^*KM^{-1}
\end{array}\right)
\ee
there holds 
\[
O_{\alpha\mu}
\left(\begin{array}{cc} P& L^*K\\
K^*L & M
\end{array}\right) 
= \left(\begin{array}{cc} A& B\\
C & D
\end{array}\right)
\]
with 
\begin{align*}
A &\coloneqq N^{-1} \left(P  - L^*KM^{-1}K^*L\right) = \idQ\\
B &\coloneqq N^{-1}L^*K + -N^{-1}L^*KM^{-1}M = 0 \\
C &\coloneqq -M^{-1}K^*LN^{-1}P + \left(M^{-1}+M^{-1}K^*LN^{-1}L^*KM^{-1}\right)K^*L\\
  &= - M^{-1}K^*L \left[ N^{-1}\left( P - L^*KM^{-1} K^*L\right) - \idQ\right]=0\\ 
D &\coloneqq -M^{-1}K^*LN^{-1}L^*K + \left(M^{-1}+M^{-1}K^*LN^{-1}L^*KM^{-1}\right)M = \idV\,, 
\end{align*}
we have
\beq\label{eq_IRGNM_TastTinv}
O_{\alpha\mu} = \bY_{\alpha,\mu}^{-1}\,.
\eeq
\item[(ii):]  
\begin{align*}
\normQV{\bY_{\alpha,\mu}^{-1} \mathbf{T}^*\mathbf{T}}
& = \normQV{\idQV-\bY_{\alpha,\mu}^{-1} \left(\begin{array}{cc}\alpha \idQ &0\\
   0&\mu \idV \end{array}\right)}\\
&\leq 1+\max\{\alpha,\mu\} \normQV{\bY_{\alpha,\mu}^{-1}}
\end{align*}
\item[(iii):]  
{  For any $v\in V$ we get}
\begin{eqnarray*}
\scalarV{Mv}{v}
&=&\|C'(u)(v)\|_G^2+\|Kv\|_{W^*}^2+\mu \|v\|_{V}^2
\geq \|Kv\|_{W^*}^2 \geq \frac{1}{\|K^{-1}\|_{W^*\to V}^2} \|v\|_V^2\,,
\end{eqnarray*}
hence we have 
\beq\label{eq_IRGNM_normsPhatinvMinv}
\|N^{-1}\|_{Q\to Q}\leq \frac{1}{\alpha}\,, \quad 
\|M^{-1}\|_{V\to V}\leq \|K^{-1}\|_{W^*\to V}^2\,.
\eeq
For $O_{\alpha\mu}$ (cf. \eqref{eq_IRGNM_Tinv}) this yields
{
 
\begin{align*}
  \normklein{O_{\alpha\mu}}_{Q\times V}^2 
  &\le \sup_{(q,u)\in Q\times V, (q,u)\neq 0}
  \frac { \normQ{N^{-1} q + N^{-1}L^*KM^{-1}u}^2} {\normQ{q}^2 + \normV{u}^2}\\
  &\quad + \sup_{(q,u)\in Q\times V, (q,u)\neq 0}
   \frac {\normV{-M^{-1}KLN^{-1}q+\left(M^{-1}+M^{-1}K^*LN^{-1}L^*KM^{-1}\right)u}^2} {\normQ{q}^2 + \normV{u}^2}\\  
  &\le 2\left(
  \norm{N^{-1}}_{Q\to Q}^2 + \norm{N^{-1}L^*KM^{-1}}_{V\to Q}^2 \right.\\
  &\quad \left.+ \norm{M^{-1}K^*LN^{-1}}_{Q\to V}^2 +\norm{M^{-1}+M^{-1}K^*LN^{-1}L^*KM^{-1}}_{V\to V}^2\right)\\
&\le 2\norm{N^{-1}}_{Q\to Q}^2(1+ \norm{L}_{Q\to W^*}^2\norm{K}_{V\to W^*}^2\norm{M^{-1}}_{V\to V}^2)^2+ 2\norm{M^{-1}}_{V\to V}^2\\
&\le \frac{2}{\alpha^2}(1+ \norm{L}_{Q\to W^*}^2\norm{K}_{V\to W^*}^2\norm{K^{-1}}_{W^*\to V}^4)^2+2\norm{K^{-1}}_{W^*\to V}^4
\end{align*}
}

\end{itemize}
\end{proof}

Motivated by Lemma \ref{lem_IRGNM_Talphamu}, and setting $\alpha_k=\frac{1}{\beta_k}$ we define a regularized iteration by
\begin{align}
\vektor{\qk}{\uk} 
& = \vektor{\qkminuseins}{\ukminuseins}
 - \left(\mathbf{F}'(\qkminuseins ,\ukminuseins )^* \mathbf{F}'(\qkminuseins ,\ukminuseins )
 + {\frac 1 {\beta_k}} \left(\begin{array}{cc}\idQ&0\\0&0\end{array}\right)\right)^{-1}\nonumber\\
& \quad \cdot \left(\mathbf{F}'(\qkminuseins ,\ukminuseins )^* (\mathbf{F}(\qkminuseins ,\ukminuseins )
-\mathbf{g}^\delta) + {\frac 1 {\beta_k}}
\left(\begin{array}{c}\qkminuseins -q_0\\0\end{array}\right)\right)
\label{eq_IRGNM_aao}
\end{align}
or equivalently $\vektor{\qk }{\uk }$ as solution to the unconstrained minimization problem 
\begin{align}
\min_{(q,u)\in Q\times V} \cT_{\beta_k}(q,u)&
\coloneqq\|L_{k-1}(q-\qkminuseins )+K_{k-1}(u-\ukminuseins )
+A(\qkminuseins ,\ukminuseins )-f\|_{W^*}^2  \nonumber\\
&\quad +\|C(\ukminuseins )+C'(\ukminuseins )(u-\ukminuseins )-g^\delta\|_G^2+ {\frac 1 {\beta_k}} \|q-q_0\|_Q^2
\label{eq_IRGNM_aao_var}
\end{align}
with the abbreviations 
\be{eq_IRGNM_ls_abbrevKL}
L_{k-1}=A'_q(\qkminuseins ,\ukminuseins )\qquad \text{and} \qquad K_{k-1}=A'_u(\qkminuseins ,\ukminuseins )\,,
\ee
where we have set the regularization parameter for the
component $u$ to zero, which is justified by (i) in Lemma \ref{lem_IRGNM_Talphamu}.

{
 
  The optimality conditions of first order for \eqref{eq_IRGNM_aao_var} read  
  \begin{align*}
  0 &= \left(\cT_{\beta_k}\right)'_q (q,u)(\delta q)  \\
  &= 2 \scalarWstern{L_{k-1}(q -\qkminuseins )+K_{k-1}(u -\ukminuseins )+A(\qkminuseins ,\ukminuseins )-f} {L_{k-1}(\delta q)}+  2\scalarQ{q-q_0}{\delta q}\\
  0 &= \left(\cT_{\beta_k}\right)'_u (q,u)(\delta u)  \\
  &= 2 \scalarWstern{L_{k-1}(q -\qkminuseins )+K_{k-1}(u -\ukminuseins )+A(\qkminuseins ,\ukminuseins )-f} {K_{k-1}(\delta u)}\\
  &\quad+ 2 \scalarG{C(\ukminuseins )+C'(\ukminuseins )(u-\ukminuseins )-g^\delta}{C'(\ukminuseins )(\delta u)}
  \end{align*}  
}

\medskip

We refer to the Appendix for a convergence and convergence rates results for \eqref{eq_IRGNM_aao} with a priori choice of the regularization parameters and in a continuous setting.

Here we are rather interested in a posteriori parameter choice rules and adaptive discretization. 
So in each step $k$ we will replace the infinite dimensional spaces $Q,V,W$ in \eqref{eq_IRGNM_aao} by finite dimensional ones 
$Q_{h},V_{h},W_{h}=Q_{h_k},V_{h_k},W_{h_k}$
\begin{align}
\vektor{\qk}{\uk} 
=& \am_{q\in Q_h,u\in V_h}
\|L_{k-1}(q-\qold )+K_{k-1}(u-\uold )
+A(\qold ,\uold )-f\|_{W_h^*}^2  \nonumber\\
&\quad +\|C(\uold )+C'(\uold )(u-\uold )-g^\delta\|_G^2+ {\frac 1 {\beta_k}} \|q-q_0\|_Q^2\,.
\label{eq_IRGNM_aao_var_h}
\end{align}
where $(\qold,\uold)=(\qkminuseins, \ukminuseins)=(\qkminuseins_{h_{k-1}}, \ukminuseins_{h_{k-1}})$ is the previous iterate, which itself is discretized by the use of spaces $Q_{h_{k-1}},V_{h_{k-1}},W_{h_{k-1}}$.
The discretization $h_k$ may be different in each Newton step (typically it will get finer for increasing $k$), but we suppress dependence of $h$ on $k$ in our notation in most of what follows. \\
To still obtain convergence of these discretized iterates, it is essential to control the discretization error in certain quantities, which are defined, analogously to \cite{KKVV13}, via the functionals 
\begin{align*}
I_1\colon &\ Q\times V\times Q\times V \times \R\,,\to \R \\
I_2\colon &\ Q\times V\times Q\times V \,,\to \R \\
I_3 \colon &\ Q\times V\,,\to \R \\
I_4\colon &\ Q\times V\,,\to \R 
\end{align*}
where we insert the previous and current iterates $(\qold,\uold)$, $(q,u)$, respectively:
\beq \label{eq_IRGNM_aao_I1234reduced}
\begin{aligned}
I_1(\qold,\uold,q,u,\beta)
& =  \norm{\Ffett'(\qold,\uold) \binom{q-\qold}{u-\uold} 
				+\Ffett(\qold,\uold) - \gfettdelta}_{G\times W^*}^2 
				+ \frac{1}{\beta} \norm{q-q_0}_Q^2\\
& = \norm{A_q'(\qold,\uold)(q-\qold)
 			 +A_u'(\qold,\uold)(u-\uold)+A(\qold,\uold)-f}^2_{W^*}\\
& \quad + \norm{C'(u)(u-\uold)+C(\uold)-g^\delta}^2_G+\frac{1}{\beta}\norm{q-q_0}^2_Q\\
I_2(\qold,\uold,q,u) 
& = \norm{\Ffett'(\qold,\uold) \binom{q-\qold}{u-\uold} +\Ffett(\qold,\uold) - \gfettdelta}_{G\times W^*}^2\\
& = \norm{A_q'(\qold,\uold)(q-\qold)+A_u'(\qold,\uold)(u-\uold)  +A(\qold,\uold)-f}^2_{W^*}\\
& \quad+ \norm{C'(u)(u-\uold)C(\uold)-g^\delta}^2_G\\
I_3(\qold,\uold) 
& = \norm{\Ffett(\qold,\uold)-\gfettdelta}^2_{G\times W^*}\\
& = \norm{A(\qold,\uold)-f}^2_{W^*} + \norm{C(\uold)-g^\delta}^2_G\\
I_4(q,u) &= \norm{\Ffett(q,u)-\gfettdelta}^2_{G\times W^*}\\
 	 &= \norm{A(q,u)-f}^2_{W^*}+\norm{C(u)-g^\delta}^2_G\,.
\end{aligned}
\ee
and quantities of interest
%
\be{eq_IRGNM_aao_I1234k}
\begin{aligned}
I_1^k &= I_1(\qoldk,\uoldk,\qk,\uk,\beta_k) \\
I_2^k &= I_2(\qoldk,\uoldk,\qk,\uk)\\
I_3^k &= I_3(\qoldk,\uoldk) \\
I_4^k &= I_4(\qk,\uk)\,.
\end{aligned}
\ee
Their discrete analogs are correspondingly defined by
\begin{align*}
\Ieinsh \colon &\ Q\times V\times Q\times V \times \R\,,\to \R \\
\Izweih \colon &\ Q\times V\times Q\times V \,,\to \R \\
\Idreih \colon &\ Q\times V\,,\to \R \\
\Ivierh \colon &\ Q\times V\,,\to \R 
\end{align*}
\beq \label{eq_IRGNM_aao_I1234reduceddiscrete}
\begin{aligned}
\Ieinsh(\qold,\uold,q,u,\beta)
& =  \norm{\Ffett'(\qold,\uold) \binom{q-\qold}{u-\uold} 
				+\Ffett(\qold,\uold) - \gfettdelta}_{G\times W_h^*}^2 
				+ \frac{1}{\beta} \norm{q-q_0}^2_Q\\
& = \norm{A_q'(\qold,\uold)(q-\qold)
 			 +A_u'(\qold,\uold)(u-\uold)+A(\qold,\uold)-f}^2_{W_h^*}\\
& \quad + \norm{C'(u)(u-\uold)+C(\uold)-g^\delta}^2_G+\frac{1}{\beta}\norm{q-q_0}^2_Q\\
\Izweih(\qold,\uold,q,u) 
& = \norm{\Ffett'(\qold,\uold) \binom{q-\qold}{u-\uold} +\Ffett(\qold,\uold) - \gfettdelta}_{G\times W_h^*}^2\\
& = \norm{A_q'(\qold,\uold)(q-\qold)+A_u'(\qold,\uold)(u-\uold)  +A(\qold,\uold)-f}^2_{W_h^*}\\
& \quad+ \norm{C'(u)(u-\uold)C(\uold)-g^\delta}^2_G\\
\Idreih(\qold,\uold) 
& = \norm{\Ffett(\qold,\uold)-\gfettdelta}^2_{G\times W_h^*}\\
& = \norm{A(\qold,\uold)-f}^2_{W_h^*} + \norm{C(\uold)-g^\delta}^2_G\\
\Ivierh(q,u) &= \norm{\Ffett(q,u)-\gfettdelta}^2_{G\times W_h^*}\\
 	 &= \norm{A(q,u)-f}^2_{W_h^*}+\norm{C(u)-g^\delta}^2_G
\end{aligned}
\ee
and
\be{eq_IRGNM_aao_I1234kdiscrete}
\begin{aligned}
I_{1,h}^k &= I_{1,h}(\qoldk,\uoldk,\qhk,\uhk,\beta_k)\\
I_{2,h}^k &= I_{2,h}(\qoldk,\uoldk,\qhk,\uhk,) \\
I_{3,h}^k &= I_{3,h}(\qoldk,\uoldk) \\
I_{4,h}^k &= I_{4,h}(\qhk,\uhk)\,.
\end{aligned}
\ee 
At the end of each iteration step we set 
\be{eq_IRGNM_ggn_qoldkplus}
\qoldkplus = \qhk \quad \text{and}\quad  \uoldkplus = \uhk\,.
\ee
\begin{rem}
Note that here neither $\qold$ nor $\uold$ are subject to new adaptive discretization in the current step, but
they are taken as fixed quantities from the previous step. This is different from \cite{KKVV13},
where $\uold$ also depends on the current discretization.

For \eqref{eq_IRGNM_aao_I1234k} and \eqref{eq_IRGNM_aao_I1234kdiscrete} we assume that the  
norms in $G$ and $Q$ are evaluated exactly cf. Assumption \ref{ass_IRGNM_GQnormevaluatedexactly}.


In our convergence proofs we will compare the quantities of interest $I_{i,h}^{k}$ with those $I_i^{k}$ that would be obtained with exact computation on the infinite dimensional spaces, starting from the same $(\qold,\uold)=({\qold}_{h_{k-1}},{\uold}_{h_{k-1}})$ as the one underlying $I_{i,h}^{k}$. Thus, in our analysis besides the actually computed sequence $(\qk_h,\uk_h)=(\qk_{h_k},\uk_{h_k})$ there appears an auxiliary sequence $(\qk,\uk)$, see Figure \ref{fig:iteration}.

\begin{figure}
\begin{center}
\setlength{\unitlength}{2cm}
\begin{picture}(5,4)
\thinlines
\put(0,0.5){\vector(1,0){4.5}}
\put(-0.05,0.45){$\bullet$}
\put(0.95,0.45){$\bullet$}
\put(1.95,0.45){$\bullet$}
\put(2.95,0.45){$\bullet$}
\put(3.95,0.45){$\bullet$}
\put(-0.2,0.3){$k=0$}
\put(0.8,0.3){$k=1$}
\put(1.8,0.3){$k=2$}
\put(2.8,0.3){$k=3$}
\put(3.8,0.3){$k=4$}
\thicklines
\put(0,1){\line(2,1){1}}
\put(1,1.5){\line(4,3){1}}
\put(2,2.25){\line(1,1){1}}
\put(3,3.25){\line(4,3){1}}
\put(-0.05,0.95){$\bullet$}
\put(0.95,1.45){$\bullet$}
\put(1.95,2.2){$\bullet$}
\put(2.95,3.2){$\bullet$}
\put(3.95,3.95){$\bullet$}
\put(0.05,0.85){$(q_0,u_0)$}
\put(1.05,1.35){$(q^1_{h_1},u^1_{h_1})=(\qoldzwei,\uold^2)$}
\put(2.05,2.1){$(q^2_{h_2},u^2_{h_2})=(\qolddrei,\uold^3)$}
\put(3.05,3.1){$(q^3_{h_3},u^3_{h_3})=(\qoldvier,\uold^4)$}
\put(4.05,3.85){$(q^4_{h_4},u^4_{h_4})$}
\thinlines
\put(0,1){\line(3,2){1}}
\put(1,1.5){\line(2,1){1}}
\put(2,2.25){\line(2,1){1}}
\put(3,3.25){\line(2,1){1}}
\put(0.95,1.61){$\diamond$}
\put(1.95,1.95){$\diamond$}
\put(2.95,2.7){$\diamond$}
\put(3.95,3.7){$\diamond$}
\put(0.35,1.75){$(q_1,u_1)$}
\put(2.0,1.8){$(q_2,u_2)$}
\put(3.05,2.6){$(q_3,u_3)$}
\put(4.05,3.6){$(q_4,u_4)$}

\end{picture}
\end{center}
\caption{Sequence of discretized iterates and auxiliary sequence of continuous iterates for the 
all-at-once formulation of IRGNM}\label{fig:iteration}
\end{figure}
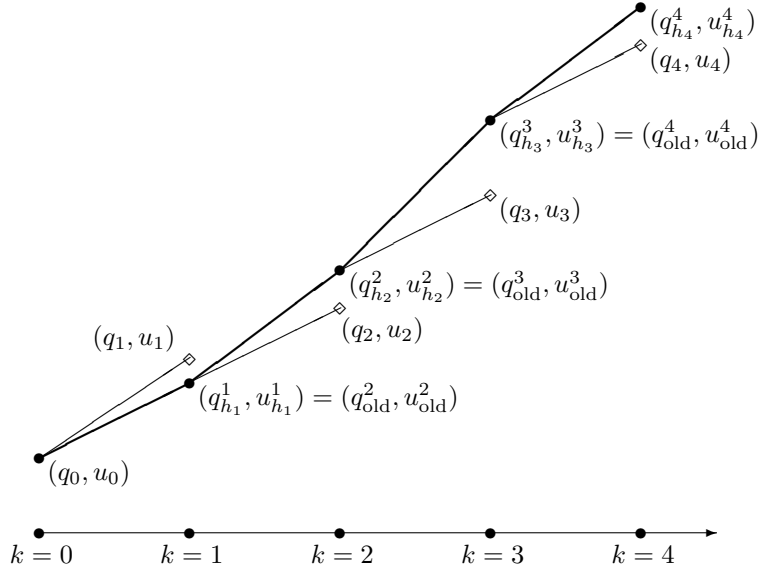

%
%

\end{rem}

We assume the knowledge about bounds $\eta_i^k$ on the error in the quantities of interest due to discretization  
\beq\label{eq_IRGNM_intcond}
|I_{i,h}^{k} - I_i^{k}| \leq \eta_i^{k} \,, \quad i\in\{1,2,3,4\}
\eeq
{ (, which can, at least partly, be computed by goal oriented error estimators, see e.g., \cite{BeckerRannacher,BeckerVexler,GKV,KKV10} 
and Section\ref{subsec_estimators_ls})} 
and to refine adaptively according to these bounds. On the other hand, we will now impose conditions on such upper bounds for the discretization error that enable to prove convergence and convergence rates results, see Assumption \ref{ass_IRGNM_etacond} below.

Additionally, we will make some assumptions on the forward operator
\begin{ass}\label{ass_IRGNM_aao_Fweakseqclosed}
Let the reduced forward operator $\bF$ be continuous and weakly sequentially closed, i.e.
\begin{eqnarray}
&&(q_n \rightharpoonup q \land u_n \rightharpoonup u \land C(u_n)\to g
\land A(q_n,u_n) \to f) \nonumber\\
&&\Rightarrow (u\in \mathcal{D}(C) \land (q,u) \in \mathcal{D}(A) \land C(u)=g  \land A(q,u) = f)
\nonumber
\end{eqnarray}
for all sequences $\left((q_n,u_n)\right)_{n \in \N} \subseteq Q\times V$.
\end{ass}
We also transfer the usual tangential cone condition to the all-at once setting from this section, which yields 
\begin{ass}\label{ass_IRGNM_aao_tangcone}
Let 
 \begin{align*}
 &\|C(u) - C(\bar{u})  - C'(u)(u - \bar{u})\|_{G}
    +\|A(q,u) - A(\bar{q},\bar{u}) - A'_q(q,u)(q - \bar{q}) - A'_u(q,u)(u - \bar{u})\|_{W^*}\nonumber\\
 &\leq c_{tc} \left(\| C(u) - C(\bar{u})\|_{G} +\| A(q,u) - A(\bar{q},\bar{u})\|_{W^*}\right)\nonumber
 \end{align*}
hold for all $(q,u), (\bar{q},\bar{u}) \in \mathcal{B}_{\rho}(q_0,u_0) \subset (Q\times V)$ and some $0 < c_{tc} <
1$. 
\end{ass}

The choice of the regularization parameter $\beta_k$ will be done a posteriori according to an inexact Newton /discrepancy principle, which with the quantities introduced above reads as  
\beq\label{eq_IRGNM_inexNewton}
{\tilde{\underline\theta}} I_{3,h}^{k} \leq 
I_{2,h}^{k}
\leq {\tilde{\overline\theta}} I_{3,h}^{k} \,.
\eeq
A discrepancy type principle will also be used for the choice of the overall stopping index
\beq\label{eq_IRGNM_stop}
k_* = \min \{ k\in \mathbb{N} ~ : ~ I_{3,h}^{k} \leq \tau^2\delta^2 \}.
\eeq 

The parameters used there have to satisfy the following assumption.
\begin{ass}\label{ass_IRGNM_tauthetacond}
Let $\tau$ and $\tilde{\underline\theta}$ be chosen sufficiently large and $\tilde{\overline\theta}$ sufficiently small 
(see \eqref{eq_IRGNM_inexNewton},\eqref{eq_IRGNM_stop}), such that
\beq\label{eq_IRGNM_taucond}
2\left(c_{tc}^2 + \frac{(1+c_{tc})^2}{\tau^2}\right) < \tilde{\underline\theta}
\quad \textrm{and}\quad 
\frac{2\tilde{\overline\theta} + 4c_{tc}^2}{1-4c_{tc}^2} < 1\,.
\eeq
\end{ass}

Therewith, we can also formulate our conditions on precision in the quantities of interest:
\begin{ass}\label{ass_IRGNM_etacond}
Let for the discretization error with respect to the quantities of interest estimate \eqref{eq_IRGNM_intcond} hold,
where $\eta_1^k$, $\eta_2^k$, $\eta_3^k$, $\eta_4^k$ are selected such that
\beq\label{eq_IRGNM_etacond1}
\eta_1^{k} + 4 c_{tc}^2 \eta_3^{k}\leq
\left({\tilde{\underline\theta}} - 2\left(2c_{tc}^2 + \frac{(1+2c_{tc})^2}{\tau^2}\right)\right)
I_{3,h}^{k}
\eeq
\beq\label{eq_IRGNM_etacond2}
\eta_3^{k}\leq c_1 I_{3,h}^{k} \ \mbox{ and } \
\eta_2^{k}\to0\,, \ \eta_3^{k}\to0\,, \ \eta_4^{k}\to0 \mbox{ as }k\to\infty 
\eeq
\beq\label{eq_IRGNM_etacond3}
I_{3,h}^{k}\leq (1+c_3)I_{4,h}^{k-1}+r^k \ \mbox{ and } \
(1+c_3)\frac{2{\tilde{\overline\theta}} + 4 c_{tc}^2}{1-4c_{tc}^2}\leq c_2<1
\eeq
for some constants $c_1, c_2, c_3 >0$, and a sequence $r^k\to0$ as $k\to\infty$ (where the second condition in \eqref{eq_IRGNM_etacond3} 
is possible due to the right inequality in \eqref{eq_IRGNM_taucond}).
\end{ass}

Exactly along the lines of the proofs of Theorems 1 and 2 in in \cite{KKVV13}, replacing $F$ there by $\mathbf{F}$ according to \eqref{eq_IRGNM_bfF}, we therewith obtain convergence and convergence
rates results:
\begin{thm}\label{thm_IRGNM_conv_aao}
Let the Assumptions \ref{ass_IRGNM_aao_qdagger}, \ref{ass_auinv}, \ref{ass_IRGNM_GQnormevaluatedexactly}, \ref{ass_IRGNM_aao_Fweakseqclosed} and \ref{ass_IRGNM_aao_tangcone}
with $c_{tc}$ sufficiently small be satisfied and let Assumption \ref{ass_IRGNM_tauthetacond} hold.
For the quantities of interest \eqref{eq_IRGNM_aao_I1234k} and \eqref{eq_IRGNM_aao_I1234kdiscrete},
let, further, the estimate \eqref{eq_IRGNM_intcond} hold with $\eta_i$ satisfying Assumption \ref{ass_IRGNM_etacond}. 

Then with $\beta_k$, $h=h_k$ fulfilling \eqref{eq_IRGNM_inexNewton}, $k_*$ selected according to \eqref{eq_IRGNM_stop}, and 
$(\qhkk,\uhkk)$ defined by \eqref{eq_IRGNM_aao_var_h} there holds
 \begin{itemize}
 \item[(i)]
 \beq\label{eq_IRGNM_ineq0_ls}
  \|\qhkk -q_0\|_Q^2+\|\uhkk -u_0\|_V^2 \leq \|q^\dag -q_0\|_Q^2+\|u^\dag -u_0\|_V^2 \qquad \forall 0\le k\leq k_*\,;
 \eeq
 \item[(ii)] $k_*$ is finite\,;
 \item[(iii)] $(\qhkstern,\uhkstern)=(q_{h_{k_*(\delta)}}^{k_*(\delta),\delta},u_{h_{k_*(\delta)}}^{k_*(\delta),\delta})$ converges (weakly) subsequentially to a solution of \eqref{eq_IRGNM_Fqug} as 
 $\delta \to 0$
in the sense that it has a weakly convergent subsequence and each
weakly convergent subsequence converges strongly to a solution of \eqref{eq_IRGNM_Fqug}.
If the solution $(q^\dagger,u^\dagger)$ to \eqref{eq_IRGNM_Fqug} is unique, then $(\qhkstern,\uhkstern)$ 
converges strongly to $(q^\dagger,u^\dagger)$  as $\delta\to0$.
\end{itemize}
\end{thm}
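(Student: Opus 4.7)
The theorem has three assertions, and they correspond directly to the analogous assertions of Theorems~1 and~2 in \cite{KKVV13}. The strategy is to transcribe those arguments, replacing the scalar forward operator $F$ by the all-at-once operator $\Ffett$ of \eqref{eq_IRGNM_bfF}, and interpreting every norm as the product norm $\|\cdot\|_{G\times W^*}^2 = \|\cdot\|_G^2+\|\cdot\|_{W^*}^2$. The only genuinely new ingredient is the bookkeeping between the computable discrete quantities $I_{i,h}^k$ and the continuous counterparts $I_i^k$ appearing in the analysis, which is mediated by \eqref{eq_IRGNM_intcond} under Assumption~\ref{ass_IRGNM_etacond}. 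Lemma~\ref{lem_IRGNM_Talphamu}(i) is what allows us to drop regularization on the $u$-component throughout.

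For part~(i), I would compare the discrete minimizer $(\qhk,\uhk)$ of \eqref{eq_IRGNM_aao_var_h} with the exact solution $(q^\dagger,u^\dagger)$. Optimality gives $\Ieinshk\le \Ieinsh(\qoldk,\uoldk,q^\dagger,u^\dagger,\beta_k)$; combining with \eqref{eq_IRGNM_intcond} and passing to the continuous quantity produces a bound involving the linearized residual at $(q^\dagger,u^\dagger)$ plus the discretization gap $\eta_1^k$. On the continuous side the tangential cone condition (Assumption~\ref{ass_IRGNM_aao_tangcone}) replaces the linearized residual by $c_{tc}^2 I_3^k$ up to the noise $\delta^2$, and the latter is absorbed through the left inequality in \eqref{eq_IRGNM_inexNewton} together with $\Idreihk\ge\tau^2\delta^2$ (valid up to $k=k_*-1$). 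The remaining discretization term $\eta_1^k + 4c_{tc}^2\eta_3^k$ is exactly what \eqref{eq_IRGNM_etacond1} is designed to absorb. Since $\Ieinshk$ contains $\tfrac{1}{\beta_k}\|\qhk-q_0\|_Q^2$, this yields monotonicity in the $q$-component, and the $u$-component is treated in parallel using the stable invertibility from Assumption~\ref{ass_auinv} to translate the Gauss--Newton update into a bound on $\|\uhk-u_0\|_V$.

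For part~(ii), I would show geometric contraction of $\Idreihk$ until the discrepancy principle \eqref{eq_IRGNM_stop} triggers. The right inequality in \eqref{eq_IRGNM_inexNewton} gives $\Izweihk\le \ottheta\Idreihk$, and a second application of the tangential cone condition converts this into an estimate of the form $\Ivierhk\le \tfrac{2\ottheta+4c_{tc}^2}{1-4c_{tc}^2}\Idreihk + O(\delta^2) + O(\eta^k)$. Invoking the first part of \eqref{eq_IRGNM_etacond3} to relate $I_{3,h}^{k+1}$ back to $I_{4,h}^k$, and the second part of \eqref{eq_IRGNM_etacond3} together with \eqref{eq_IRGNM_taucond} to produce a contraction constant $c_2<1$, one obtains $I_{3,h}^{k+1}\le c_2 \Idreihk + r^{k+1}$; hence $\Idreihk$ must drop below $\tau^2\delta^2$ after finitely many steps.

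Part~(iii) proceeds by weak subsequential compactness: (i) shows $(\qhkstern,\uhkstern)$ is bounded uniformly in $\delta$. By \eqref{eq_IRGNM_stop} and the decay $\eta_3^k\to 0$ from \eqref{eq_IRGNM_etacond2}, $\Ffett(\qhkstern,\uhkstern)\to \mathbf{g}$ as $\delta\to 0$, so every weak accumulation point is a solution of \eqref{eq_IRGNM_Fqug} by Assumption~\ref{ass_IRGNM_aao_Fweakseqclosed}. The standard Radon--Riesz argument, comparing $\limsup_{\delta\to 0}(\|\qhkstern-q_0\|_Q^2+\|\uhkstern-u_0\|_V^2)$ against the weak lower semicontinuity of the norms and using (i) tested at the weak limit, upgrades weak to strong convergence and gives uniqueness of the limit when $(q^\dagger,u^\dagger)$ is unique. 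The main obstacle I anticipate is the step~(i) bookkeeping: the tangential cone condition lives at the continuous level whereas optimality lives at the discrete level, and it is exactly the interplay of \eqref{eq_IRGNM_intcond} with the carefully tuned constants in \eqref{eq_IRGNM_taucond} and \eqref{eq_IRGNM_etacond1}--\eqref{eq_IRGNM_etacond3} that closes this gap. Once in place, the remainder of the proof is structurally identical to \cite{KKVV13}.
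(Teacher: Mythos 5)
Your overall skeleton (transcribe the proofs of Theorems 1 and 2 of \cite{KKVV13} with $F$ replaced by $\bF$, treat (ii) by a geometric recursion for the discrepancy-type quantities, and (iii) by weak compactness plus Assumption \ref{ass_IRGNM_aao_Fweakseqclosed} and the standard strong-convergence argument) is exactly the route the paper takes, and parts (ii) and (iii) of your plan are sound. The problem is the concrete mechanism you give for part (i). The step ``Optimality gives $\Ieinshk\le \Ieinsh(\qoldk,\uoldk,q^\dagger,u^\dagger,\beta_k)$'' is not available: the computed iterate $(\qhkk,\uhkk)$ minimizes \eqref{eq_IRGNM_aao_var_h} only over $Q_{h_k}\times V_{h_k}$, and $(q^\dagger,u^\dagger)$ does not belong to that space, so discrete minimality cannot be tested there. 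Nor can \eqref{eq_IRGNM_intcond} repair this afterwards: the bounds $\eta_i^k$ control only the gap between $I_{i,h}^k$ and $I_i^k$, i.e.\ between the quantities evaluated at the discrete iterate and at the auxiliary \emph{continuous} iterate $(\qk,\uk)$ obtained by solving \eqref{eq_IRGNM_aao_var} over all of $Q\times V$ from the same $(\qoldk,\uoldk)$ (cf.\ Figure \ref{fig:iteration}); they say nothing about evaluations at arbitrary points such as $(q^\dagger,u^\dagger)$. The paper's chain (displayed in detail in the proof of Theorem \ref{thm_IRGNM_conv_sqp}, which is the template for the present theorem) runs in the opposite order: first ${\tilde{\underline\theta}}\,I_{3,h}^{k}+\tfrac 1{\beta_k}\|\qhkk-q_0\|_Q^2 \le I_{1,h}^{k}\le I_1^{k}+\eta_1^{k}$ by \eqref{eq_IRGNM_inexNewton} and \eqref{eq_IRGNM_intcond}; then $I_1^k$ is bounded by testing minimality (or the first-order conditions) of the continuous minimizer $(\qk,\uk)$ against the admissible competitor $(q^\dagger,u^\dagger)$; only then do the tangential cone condition, the stopping rule and \eqref{eq_IRGNM_etacond1} enter. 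Without introducing the auxiliary continuous sequence and placing the comparison with $(q^\dagger,u^\dagger)$ at the continuous level, your induction step does not close.

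A second, smaller point: in the least-squares functional \eqref{eq_IRGNM_aao_var_h} there is no $\tfrac1{\beta_k}\|u-u_0\|_V^2$ term, so the $u$-part of \eqref{eq_IRGNM_ineq0_ls} cannot be read off the penalty as in the generalized Gauss--Newton case. Your appeal to Assumption \ref{ass_auinv} (stable invertibility of $A'_u$, to convert control of the linearized state residual into control of $\uhkk-u_0$) is the right instinct, but it is only gestured at and is precisely where additional work beyond the $q$-estimate is required; as stated it is not yet a proof of the $u$-component of the monotonicity estimate.
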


For proving rates, as usual (cf. e.g. \cite{BakuKokurin,EHNBuch,HohageDiss,KNSBuch}) source conditions are assumed
\begin{ass}\label{ass_IRGNM_source_aao}
Let 
\[
  (q^\dag -q_0,u^\dag-u_0)
  \in \cR \left(\kappa\left(\bF'(q^\dag,u^\dag)^*\bF'(q^\dag,u^\dag)\right)\right)
\]
(cf. Assumption \ref{ass_IRGNM_aao_qdagger})
hold with some $\kappa\colon \R^+\to\R^+$ such that $\kappa^2$ is strictly monotonically increasing on
$(0, \|\bF(q^\dag,u^\dag)\|_{Q\times V}^2]$,
$\phi$ defined by $\phi^{-1}(\lambda)=\kappa^2(\lambda)$ is convex and $\psi$ defined by
  $\psi(\lambda)=\kappa(\lambda)\sqrt{\lambda}$ is  strictly monotonically increasing on
  $(0,\|\bF(q^\dag,u^\dag)\|_{Q\times V}^2]$. 
Here, for some selfadjoint nonnegative operator $A$, the operator function $\kappa(A)$ is defined via functional calculus based on the spectral theorem (cf. e.g. \cite{EHNBuch}).
\end{ass}

\begin{thm} \label{thm_IRGNM_rates_aao}
Let the conditions of Theorem \ref{thm_IRGNM_conv_aao} and additionally
the source condition Assumption \ref{ass_IRGNM_source_aao} 
be fulfiled. 

Then there exists a $\bar{\delta}>0$ and a constant $\bar{C}>0$ independent of $\delta$ such that
for all $\delta\in (0,\bar{\delta}]$
the convergence rates 
 \begin{equation}\label{eq_IRGNM_rates0_aao}
   \normQklein{\qold^{k_*}-\qdag}^2+ \normVklein{\uold^{k_*}-u^\dag}^2
 = \mathcal{O}\left(\frac{ \delta^2 }{\psi^{-1}(\bar{C}\delta)}\right)\,.
 \end{equation}
 are obtained.
\end{thm}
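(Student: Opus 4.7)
The plan is to recycle the IRGNM rates template of \cite{KKVV13}, treating the all-at-once map $\mathbf{F}\colon Q\times V\to G\times W^*$ as the forward operator on the product Hilbert space with the norm \eqref{eq_IRGNM_timesnormdef}. The authors already announce this reduction explicitly, so the main content of the plan is to verify that each ingredient of the reduced proof has an all-at-once counterpart. Iteration \eqref{eq_IRGNM_aao} is well defined thanks to Lemma \ref{lem_IRGNM_Talphamu}(i), and the quantitative bound $\|\bY_{1/\beta,0}^{-1}\|_{Q\times V}\lesssim \beta+1$ from Lemma \ref{lem_IRGNM_Talphamu}(iii) replaces the $1/\alpha$-type estimate of the reduced setting, at the expense of an irrelevant additive constant; this is exactly what is needed to keep the standard spectral-calculus estimates in force.

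I would set $e^k := (\qoldk-q^\dag,\uoldk-u^\dag)\in Q\times V$ and derive the recursion by subtracting $(q^\dag,u^\dag)$ on both sides of \eqref{eq_IRGNM_aao}. Writing $\bT_{k-1}:=\bF'(\qoldk,\uoldk)$, this gives a representation of the form
\begin{equation*}
e^k = \bY_{1/\beta_k,0}^{-1}\Bigl[\tfrac{1}{\beta_k}\binom{q_0-q^\dag}{0} + \bT_{k-1}^*\bigl(\bF(\qoldk,\uoldk)-\gfettdelta -\bT_{k-1}e^{k-1}\bigr)\Bigr],
\end{equation*}
which, after a Taylor expansion and use of Assumption \ref{ass_IRGNM_aao_tangcone}, splits the right-hand side into a noise-free contribution (driven by the source element via Assumption \ref{ass_IRGNM_source_aao}), a data-noise contribution of order $\delta\sqrt{\beta_k}$, a nonlinearity defect controlled by $c_{tc}\cdot\sqrt{I_3^k}$, and a discretization defect controlled by $\eta_i^k$. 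The source condition together with the spectral theorem yields the standard filter bound of the noise-free part by $\kappa(\lambda)$ times a uniformly bounded function of $\lambda\bT_{k-1}^*\bT_{k-1}$, exactly as in \cite{KKVV13}.

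I would then invoke Theorem \ref{thm_IRGNM_conv_aao}: its proof already ensures $k_*<\infty$, validity of the inexact Newton condition \eqref{eq_IRGNM_inexNewton}, and the monotonicity \eqref{eq_IRGNM_ineq0_ls}. At $k=k_*$ the stopping rule \eqref{eq_IRGNM_stop} together with Assumption \ref{ass_IRGNM_etacond} gives $I_3^{k_*}\lesssim \tau^2\delta^2$. Feeding this into the spectral bound for $\|e^{k_*}\|_{Q\times V}$, the standard interpolation step---solving $\psi(\lambda^*) = \kappa(\lambda^*)\sqrt{\lambda^*} \sim \bar C\delta$ and using convexity of $\phi=(\kappa^2)^{-1}$---produces the rate $\|e^{k_*}\|_{Q\times V}^2 = \mathcal{O}(\delta^2/\psi^{-1}(\bar C\delta))$, which is \eqref{eq_IRGNM_rates0_aao}.

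The hard part is bookkeeping rather than new mathematics: one must check that the asymmetric regularizer (only the $q$-block is penalized) does not destroy the spectral-filter argument, and that the three perturbation sources---noise, tangential-cone defect, and the $\eta_i^k$-gap between $I_i^k$ and $I_{i,h}^k$---can simultaneously be absorbed into the dominant $\kappa$-driven term for every $k\le k_*$. The first issue is handled by factoring $\bY_{1/\beta_k,0}^{-1}$ through the Schur complement $N$ of Lemma \ref{lem_IRGNM_Talphamu}(i), whose $q$-compression acts like a standard Tikhonov regularizer; the second is handled by \eqref{eq_IRGNM_etacond1}--\eqref{eq_IRGNM_etacond3}, whose constants in Assumption \ref{ass_IRGNM_tauthetacond} were precisely chosen to make the induction close. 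Beyond this, the argument is a line-by-line translation of \cite[Thm.~2]{KKVV13} and requires no genuinely new estimate.
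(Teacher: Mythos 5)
Your plan is not the route the paper takes, and as written it has real gaps. The paper's proof of Theorem~\ref{thm_IRGNM_rates_aao} is the a posteriori argument of Theorem~2 in \cite{KKVV13} transplanted verbatim with $F\to\mathbf{F}$: it never forms an error recursion or a spectral filter bound. It uses only three ingredients, all already available: the monotonicity estimate \eqref{eq_IRGNM_ineq0_ls} from Theorem~\ref{thm_IRGNM_conv_aao}(i), the residual bound at the stopping index coming from \eqref{eq_IRGNM_stop}, Assumption~\ref{ass_IRGNM_etacond} and the tangential cone condition (so that $\normklein{\mathbf{F}(\qold^{k_*},\uold^{k_*})-\mathbf{g}}_{G\times W^*}=\mathcal{O}(\delta)$), and the source condition Assumption~\ref{ass_IRGNM_source_aao} combined with the convexity of $\phi$ via a Jensen-type interpolation inequality, which converts $\normklein{(\qold^{k_*},\uold^{k_*})-(q^\dag,u^\dag)}^2\le 2|\langle (q^\dag-q_0,u^\dag-u_0),(q^\dag,u^\dag)-(\qold^{k_*},\uold^{k_*})\rangle|$ into the rate $\delta^2/\psi^{-1}(\bar C\delta)$. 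This is exactly why the hypotheses contain no nonlinearity condition beyond Assumption~\ref{ass_IRGNM_aao_tangcone} and no growth condition on $\beta_k$ (compare the proof of Theorem~\ref{thm_IRGNM_rates_sqp}, which likewise invokes only \eqref{eq_IRGNM_ineq0_sqp} and \eqref{eq_IRGNM_aao_Fconv}).

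Your recursion $e^k=\bY_{1/\beta_k,0}^{-1}[\dots]$ is the a priori filter-function template of the Appendix, and it does not close under the assumptions of this theorem. First, your noise contribution ``of order $\delta\sqrt{\beta_k}$'' is useless unless you can couple $\beta_{k_*}$ to $\delta$ (as the Appendix theorem does a priori via $\delta\sim\eta\beta_{k_*}^{-\nu-1/2}$); the discrepancy-type choices \eqref{eq_IRGNM_inexNewton}, \eqref{eq_IRGNM_stop} give no such control, and deriving it would be a substantial extra argument you do not supply. Second, $\bY_{1/\beta_k,0}$ is not a spectral function of $\bT^*\bT$ because only the $q$-block is penalized, so ``the source condition together with the spectral theorem'' does not yield a filter bound; the paper needs the nontrivial Lemma in the Appendix (interpolation inequality plus Lemma~3.13 of \cite{HohageDiss}) even for the special choices $\kappa_\nu,\kappa_p$, and no analogue is available for the general $\kappa$ of Assumption~\ref{ass_IRGNM_source_aao}. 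Third, your Taylor/recursion step compares $\kappa$ of $\bT_{k-1}^*\bT_{k-1}$ (at discretized iterates) with $\kappa$ at $(q^\dag,u^\dag)$; in the recursion framework this requires range-invariance or Lipschitz conditions on $\mathbf{F}'$ (precisely the extra hypotheses of the Appendix theorem), which Theorem~\ref{thm_IRGNM_rates_aao} deliberately avoids and which the tangential cone condition alone does not give. The correct repair is to drop the recursion entirely and argue variationally as above.
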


\begin{rem}
We compare the source conditions for the reduced formulation
\begin{equation}\label{eq_nonlineartikhonov_sourceconditiongeneral}
   \qdag-q_0 \in \mathcal{R}\left(\kappa\left(F'(\qdag)^*F'(\qdag)\right)\right)
\end{equation}
with Assumption \ref{ass_IRGNM_source_aao} for the
all-at-once formulation, e.g. in the case $\kappa(\lambda)=\sqrt{\lambda}$. 
Namely, in that case
\eqref{eq_nonlineartikhonov_sourceconditiongeneral} reads: There exists $\ol g\in G$ such that 
\beq\label{eq_IRGNM_source12}
q^\dag-q_0=F'(q^\dag)^*\ol g= S'(q^\dag)^*C'(S(q^\dag))^*\ol g\,. 
\eeq
On the other hand, 
Assumption \ref{ass_IRGNM_source_aao} with the same $\kappa$ reads: There exists 
$\tilde{\mathbf{g}}=\vektor{\tilde g}{\tilde f} \in G \times W^*$ such that 
\[
  (q^\dag -q_0,u^\dag-u_0)
= \bF'(q^\dag,u^\dag)^*\tilde{\mathbf{g}} 
  = \left(\begin{array}{cc}0&A'_q(q^\dag,u^\dag)^*\\
  C'(u^\dag)^*&A'_u(q^\dag,u^\dag)^*\end{array}\right)\tilde{\mathbf{g}}\,,
\]
which is equivalent to 
\begin{eqnarray*}
q^\dag-q_0&=&A'_q(q^\dag,u^\dag)^*\tilde f\\
u^\dag-u_0&=&C'(u^\dag)^*\tilde g+A'_u(q^\dag,u^\dag)^*\tilde f\,,
\end{eqnarray*}
and by elimination of $\tilde f$ and use of the identities $u^\dag = S(q^\dag)$ and 
$S'(q^\dag)= -A_u'(q^\dag,u^\dag)^{-1} A'_q(q^\dag,u^\dag)$ we get
\[
  q^\dag -q_0 = S'(q^\dag)^* \Bigl(C'(S(q^\dag))^*\tilde g + u_0- u^\dag\Bigr)\,,
\]
which, setting $\overline g = \tilde g + C'(u^\dag)^{-*} (u_0- u^\dag)$ becomes \eqref{eq_IRGNM_source12}, provided $u_0- u^\dag\in \cR(C'(u^\dag)^*)$.
\end{rem}

\subsection{Computation of the error estimators}\label{subsec_estimators_ls}

Theoretically the error estimators for this subsection can be computed similarly to those from 
\cite{KKVV13}. The fact that we consider an unconstrained optimization problem should make things
easier, but we get another problem in return: For estimating $I_1$ and $I_2$ we would have to estimate terms
like
\[
  \norm{E(q,u)}_{W^*} - \norm{E(\qh,\uh)}_{W_h^*}
\]
for some operator $E \colon Q\times V \to W^*$, which would be quite an effort to do via goal oriented error estimators. 
For this reason, the presented least squares formulation will not be implemented
and we will not go into more detail concerning the error estimators for this section.

\section{A Generalized Gauss-Newton formulation} \label{subsec_IRGNM_sqp}

A drawback of the unconstrained formulation \eqref{eq_IRGNM_aao_var} is the necessity of computing the $W^*$-norm
of the (linearized) residual and especially of computing error estimators for this quantity of interest.
Besides, a rescaling of the state equation \eqref{eq_IRGNM_Auqf0} changes the solution of the optimization problem.
Moreover, depending on the given inverse problem and its application, in some cases, it does not make sense to only minimize
the residual of the linearized state equation, instead of setting it to zero.  

A formulation that is much better tractable is obtained by defining $(\qk, \uk)=(\qkdelta, \ukdelta)$ as a solution to the PDE
constrained minimization problem
\begin{align}
\min_{(q,u)\in Q\times V} &\mathcal{T}_{\beta_k}(q,u)\coloneqq\|C(\ukminuseins)+C'(\ukminuseins)(u-\ukminuseins)-g^\delta\|_G^2\nonumber\\
&\qquad\qquad\qquad+ {\frac 1 {\beta_k}} \left(\|q-q_0\|_Q^2+\|u-u_0\|_V^2\right)\label{eq_IRGNM_IRGNMsqp}\\
&\text{s.t.} \quad 
L_{k-1}(q-\qkminuseins)+K_{k-1}(u-\ukminuseins)
+A(\qkminuseins,\ukminuseins)=f \qquad \text{in } W^* \label{eq_IRGNM_ggn_state}
\end{align}
(see also \cite{BurgerMuehlhuberIP}, \cite{BurgerMuehlhuberSINUM}) 
with the abbreviations \eqref{eq_IRGNM_ls_abbrevKL}.

We consider the Lagrangian $\L\colon Q\times V\times W\to \R$ 
\[
  \L(q,u,z) \coloneqq \cT_{\beta_k}(q,u) + \langle f - A(\qkminuseins,\ukminuseins) - L_{k-1}(q-\qkminuseins)- K_{k-1}(u-\ukminuseins) ,z\rangle_{W^*,W}
\]
and formulate the optimality conditions of first order for \eqref{eq_IRGNM_IRGNMsqp}:
\begin{align}
   \L_z'(q,u,z)(\delta z)  
     &=  \langle f - A(\qkminuseins,\ukminuseins) - L_{k-1}(q-\qkminuseins)- K_{k-1}(u-\ukminuseins) ,\delta z\rangle_{W^*,W}
     =0\,,\label{eq_ggn_Lz}\\        
   \L_u'(q,u,z)(\delta u)
     &= 2 \scalarG{C(\ukminuseins)+C'(\ukminuseins)(u-\ukminuseins)-g^\delta}{C'(\ukminuseins)(\delta u)} \nonumber\\
     &\quad + \tfrac 2 {\beta_k} \scalarV{u-u_0}{\delta u}               
      - \dualW{ K_{k-1} \delta u}{z}=0\,,\label{eq_ggn_Lu}\\
   \L_q'(q,u,z)(\delta q)
  	&= \tfrac 2 {\beta_k} \scalarQ{q-q_0}{\delta q}- \dualW{L_{k-1} \delta q}{z}=0\label{eq_ggn_Lq}
\end{align}
for all $\delta q\in Q$, $\delta u \in V$, $\delta z \in W$.

We assume boundedness of the operators  $A(q,u),L_{k-1},K_{k-1}^*,K_{k-1}^{-1},C(u)$ and $C'(u)$
in the following sense.
\begin{ass}\label{ass_IRGNM_unifbyA}
There holds 
\[
\sup_{(q,u)\in\mathcal{B}_{\rho}(q_0,u_0)}
\|A(q,u)\|_{W^*}
+ \|A'_q(q,u)\|_{Q\to W^*} 
+ \|A'_u(q,u)^*\|_{W^*\to V} 
 \|A'_u(q,u)^{-1}\|_{W^*\to V} <\infty
\]
and 
\[
\sup_{u\in\mathcal{B}_{\rho}(u_0)}
\{\|C(u)\|_{G}
+ \|C'(u)\|_{V\to G} \}<\infty\,.
\]
\end{ass}

The following lemma about boundedness of the adjoint variable will serve as tool for unformly bounding the penalty parameter $\varrho$.
\begin{lem}\label{lem_IRGNM_adjointbounded}
Under Assumption \ref{ass_IRGNM_unifbyA} and provided $(\qkminuseins,\ukminuseins)\in \mathcal{B}_{\rho}(q_0,u_0)$, 
{ 
for a stationary point $(\qk,\uk,\zk) \in Q \times  V \times W$ of $\L$ (cf. \eqref{eq_ggn_Lz} - \eqref{eq_ggn_Lq})
there holds the estimate 
\be{eq_IRGNM_helpadjointresult}
  \|\zk\|_{W}\leq \cadj \left(\|\qkminuseins -q_0\|_Q+\|\ukminuseins-u_0\|_V+1\right)\,,
\ee
with a constant $\cadj$ independent of $k$.
}
\end{lem}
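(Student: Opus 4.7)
The plan is to read $\zk$ directly off the adjoint equation \eqref{eq_ggn_Lu}, bound its $W$-norm using the invertibility of $\Kk$, and then control the two resulting terms by comparing $\cT_{\beta_k}(\qk,\uk)$ against a well-chosen feasible point.

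Setting $r^k := C(\ukminuseins) + C'(\ukminuseins)(\uk - \ukminuseins) - g^\delta$, relation \eqref{eq_ggn_Lu} reads
\[
\dualW{\Kk \delta u}{\zk}
 = 2\scalarG{r^k}{C'(\ukminuseins)\delta u}
 + \tfrac{2}{\beta_k}\scalarV{\uk - u_0}{\delta u} \qquad \forall \delta u \in V,
\]
whose right-hand side is a linear functional of $\delta u$ of $V^*$-norm at most $2\|C'(\ukminuseins)\|_{V\to G}\|r^k\|_G + \tfrac{2}{\beta_k}\|\uk - u_0\|_V$. By Assumption \ref{ass_auinv} the operator $\Kk\colon V\to W^*$ is boundedly invertible, hence so is its Banach-space adjoint $W\to V^*$, with inverse of operator norm $\|\Kk^{-1}\|_{W^*\to V}$. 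Consequently
\[
\|\zk\|_W \leq \|\Kk^{-1}\|_{W^*\to V}\left(2\|C'(\ukminuseins)\|_{V\to G}\|r^k\|_G + \tfrac{2}{\beta_k}\|\uk - u_0\|_V\right),
\]
and under Assumption \ref{ass_IRGNM_unifbyA} the prefactors $\|\Kk^{-1}\|$ and $\|C'(\ukminuseins)\|$ are bounded uniformly in $k$. It therefore suffices to bound $\|r^k\|_G$ and $\tfrac{1}{\beta_k}\|\uk - u_0\|_V$ by constant multiples of $1 + \|\qkminuseins - q_0\|_Q + \|\ukminuseins - u_0\|_V$.

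For this I exploit minimality of $(\qk,\uk)$ against the auxiliary feasible point $(\qkminuseins,\hat u)$, where $\hat u := \ukminuseins + \Kk^{-1}\bigl(f - A(\qkminuseins,\ukminuseins)\bigr)$ solves the linearised state equation \eqref{eq_IRGNM_ggn_state} by construction. Assumption \ref{ass_IRGNM_unifbyA} uniformly bounds each of $\|A(\qkminuseins,\ukminuseins)\|_{W^*}$, $\|\Kk^{-1}\|$, $\|C(\ukminuseins)\|_G$, $\|C'(\ukminuseins)\|_{V\to G}$ and $\|g^\delta\|_G$ by a constant $M$ independent of $k$, so the same holds for $\|\hat u - \ukminuseins\|_V$ and for the linearised misfit $\|C(\ukminuseins) + C'(\ukminuseins)(\hat u - \ukminuseins) - g^\delta\|_G$. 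Hence
\[
\cT_{\beta_k}(\qkminuseins,\hat u) \leq M^2 + \tfrac{1}{\beta_k}\bigl(\|\qkminuseins - q_0\|_Q^2 + 2M^2 + 2\|\ukminuseins - u_0\|_V^2\bigr),
\]
and minimality forces the same upper bound on $\|r^k\|_G^2 + \tfrac{1}{\beta_k}\|\uk-u_0\|_V^2$ (after enlarging $M$ if needed).

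The main obstacle is then turning the bound on $\tfrac{1}{\beta_k}\|\uk - u_0\|_V^2$ into one on $\tfrac{1}{\beta_k}\|\uk - u_0\|_V$: taking a square root leaves a stray factor $\beta_k^{-1/2}$. I absorb it by invoking the uniform lower bound $\beta_k \geq \beta_0 > 0$, which follows from the standard convention that the IRGNM regularisation parameter $1/\beta_k$ is non-increasing. Using subadditivity of the square root one then obtains
\[
\|r^k\|_G + \tfrac{1}{\beta_k}\|\uk - u_0\|_V \leq c_0\bigl(1 + \|\qkminuseins - q_0\|_Q + \|\ukminuseins - u_0\|_V\bigr),
\]
with $c_0$ depending only on $M$ and $\beta_0$. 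Substituting back into the estimate for $\|\zk\|_W$ above yields the claim with a $\cadj$ determined by $c_0$ and the uniform constants of Assumption \ref{ass_IRGNM_unifbyA}.
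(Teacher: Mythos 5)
Your argument is essentially correct, but it takes a genuinely different route from the paper's proof. The paper works with the full stationarity system: it introduces the Riesz representative $\pk=J_{W^*}\zk$ (cf.\ \eqref{eq_IRGNM_pdef}), eliminates $\qk$ and $\uk$ from \eqref{eq_ggn_Lz}--\eqref{eq_ggn_Lq}, and, after premultiplying by $C_\beta^{-1}$ with $C_\beta=\bigl(\tfrac{1}{\beta_k}\idV+\Ck^*\Ck\bigr)^{1/2}$, exploits positive semidefiniteness of $\beta_k (C_\beta\Kk^{-1}\Lk)(C_\beta\Kk^{-1}\Lk)^*$ to bound $\|\Kk^*\pk\|_V$ by quantities involving only the previous iterate, which are then controlled via Assumption \ref{ass_IRGNM_unifbyA}. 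You instead use only the $u$-equation \eqref{eq_ggn_Lu} to read $\zk$ off through $\Kk^{-1}$, and then control the two current-iterate quantities $\|C(\ukminuseins)+C'(\ukminuseins)(\uk-\ukminuseins)-g^\delta\|_G$ and $\tfrac{1}{\beta_k}\|\uk-u_0\|_V$ by comparing the objective value at $(\qk,\uk)$ with the explicit feasible point $(\qkminuseins,\hat u)$, $\hat u=\ukminuseins+\Kk^{-1}(f-A(\qkminuseins,\ukminuseins))$. Your route is more elementary (no operator elimination, no square-root operator $C_\beta$) and makes transparent why only $(\qkminuseins,\ukminuseins)$ appears on the right-hand side; the paper's algebraic route buys independence of any minimality property, since it uses nothing beyond the stationarity system itself.

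Two points should be tightened. First, the lemma is stated for a stationary point of $\L$, whereas you invoke minimality of $(\qk,\uk)$; this is legitimate because \eqref{eq_IRGNM_IRGNMsqp}, \eqref{eq_IRGNM_ggn_state} is a convex problem (quadratic objective, affine constraint), so the primal part of any KKT point is a global minimizer -- the paper itself notes this equivalence before Theorem \ref{thm_IRGNM_conv_sqp} -- but you should say so explicitly. Second, your absorption of the stray $\beta_k^{-1/2}$ appeals to a ``standard convention'' that $1/\beta_k$ is non-increasing, which is not established here: $\beta_k$ is chosen a posteriori via \eqref{eq_IRGNM_inexNewton} in each step. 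What you actually need is $\inf_k\beta_k>0$; note, though, that the paper's own final estimate carries the analogous factors $\bigl(\tfrac{1}{\beta_k}+\|\Ck\|_{V\to G}^2\bigr)^{1/2}$ and $\beta_k^{-1/2}$, so it implicitly relies on the same uniform lower bound -- state it as a property of the parameter choice rather than a convention. Finally, a cosmetic slip: Assumption \ref{ass_IRGNM_unifbyA} does not bound $\|g^\delta\|_G$ or $\|f\|_{W^*}$; these are fixed data and hence trivially bounded, so the constant is unaffected, but the attribution should be corrected.
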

\begin{proof}
To formulate the optimality system \eqref{eq_ggn_Lz}-\eqref{eq_ggn_Lq} in a matrix-vector form, we introduce another dual variable
$p \in W^*$ defined by 
\be{eq_IRGNM_pdef}
  p =J_{W^*} z \in W^*
\ee
via the map $J_{W^*}$, which maps $z\in W$ to the Riesz representation
$J_{W^*}z\in W^*$ of the linear functional $W^*\to \R$, $w^*\mapsto w^*(p)$, such that
\[
  \L(q,u,z) = \cT_{\beta_k}(q,u) + (f - A(\qkminuseins,\ukminuseins) - L_{k-1}(q-\qkminuseins)- K_{k-1}(u-\ukminuseins,) ,p)_{W^*}\,.
\]
Using the abbreviations \eqref{eq_IRGNM_ls_abbrevKL} and 
\be{eq_IRGNM_abbrevCr}
\Ck\coloneqq C'(\ukminuseins) \,, \quad \rf\coloneqq A(\qkminuseins,\ukminuseins)-f\,, \quad \rg \coloneqq C(\ukminuseins)-g^\delta
\ee 
the optimality system \eqref{eq_ggn_Lz}-\eqref{eq_ggn_Lq} can be written as
\begin{align*}
   \L_z'(q,u,z)(\delta z)  
     &=  \langle f - A(\qkminuseins,\ukminuseins) - L_{k-1}(q-\qkminuseins)- K_{k-1}(u-\ukminuseins) ,\delta z\rangle_{W^*,W}=0\,,\\        
   \L_u'(q,u,z)(\delta u)
     &= 2\scalarG{\rg+\Ck(u-\ukminuseins)}{\Ck \delta u} + \tfrac 2 {\beta_k} \scalarV{u-u_0}{\delta u}- \scalarWstern{K \delta u}{p}\\
     & = \scalarV{2\Ck^*[\rg+\Ck(u-\ukminuseins)] + \tfrac 2 {\beta_k} (u-u_0) - K_{k-1}^*p}{\delta u}  =0\,,\\
   \L_q'(q,u,z)(\delta q)
  	&= \tfrac 2 {\beta_k} \scalarQ{q-q_0}{\delta q} -\scalarWstern{L_{k-1}\delta q}{p}
  	= \scalarQ{\tfrac 2 {\beta_k} (q-q_0)-L_{k-1}^*p}{\delta q}=0
\end{align*}
for all $\delta q\in Q$, $\delta u \in V$ and $\delta z\in W$, or equivalently as 
\begin{align*}
  \qk &= q_0 + \frac {\beta_k} 2 \Lk^*\pk \\
   \uk &= \left[\frac 2 {\beta_k}\idV + 2\Ck^*\Ck\right]^{-1} 
   \left(2\Ck^*\left(\Ck(\ukminuseins)-\rg\right)+\frac 2 {\beta_k}u_0 + \Kk^*\pk \right)\\
   \uk &=  \Kk^{-1} \left(\Lk\qkminuseins +\Kk \ukminuseins - \rf -  \Lk\qk \right)\,.
\end{align*}
Eliminating $\qk$ and $\uk$ this yields 
\begin{align*}
   &\left[\frac 2 {\beta_k}\idV + 2\Ck^*\Ck\right]^{-1} 
   \left(2\Ck^*\left(\Ck(\ukminuseins)-\rg\right)+\frac 2 {\beta_k}u_0 + \Kk^* \pk \right)\\
    &=  \Kk^{-1} \left(\Lk\qkminuseins +\Kk \ukminuseins - \rf -  
    \Lk  \left(q_0 + \frac {\beta_k} 2 \Lk^*\pk\right) \right)\,,
\end{align*}
which we reformulate as
\begin{align*}
 &-\frac {\beta_k} 2\Kk^{-1} \Lk  \Lk^*\pk
 - \left[\frac 2 {\beta_k}\idV + 2\Ck^*\Ck\right]^{-1}\Kk^* \pk\\
 &=   \left[\frac 2 {\beta_k}\idV + 2\Ck^*\Ck\right]^{-1} 
   \left(2\Ck^*\left(\Ck(\ukminuseins)-\rg\right)+\frac 2 {\beta_k}u_0\right)\\ 
   & \quad -  \Kk^{-1} \left(\Lk(\qkminuseins-q_0) +\Kk \ukminuseins- \rf\right)
\end{align*}
and finally
  \begin{align*}
 &-\left[\frac 1 {\beta_k}\idV + \Ck^*\Ck\right] \beta_k \Kk^{-1} \Lk  \Lk^*\pk
 - \Kk^* \pk\\
 &= 2\Ck^*\left(\Ck(\ukminuseins)-\rg\right)+\frac 2 {\beta_k}u_0\\ 
   & \quad - 2\left[\frac 1 {\beta_k}\idV + \Ck^*\Ck\right]\Kk^{-1} \left(\Lk(\qkminuseins-q_0) +\Kk \ukminuseins- \rf\right)\,.
\end{align*}
With 
\[
C_\beta\coloneqq\left({\frac 1 {\beta_k}} \idV+\Ck^*\Ck\right)^{1/2}
\]
this is equivalent to
 \begin{align*}
 & -\beta_k  C_\beta^2 \Kk^{-1} \Lk  \Lk^*\pk
 - \Kk^* \pk\\
 &= 2\Ck^*\left(\Ck(\ukminuseins)-\rg\right)+\frac 2 {\beta_k}u_0\\ 
   & \quad - 2 C_\beta^2\Kk^{-1} \left(\Lk(\qkminuseins-q_0) +\Kk \ukminuseins- \rf\right)\,,
\end{align*}
which upon premultiplication with $C_\beta^{-1}$ becomes
 \begin{align*}
  & -\left(\beta_k C_\beta \Kk^{-1} \Lk  (\Kk^{-1} \Lk)^* C_\beta+ \idV\right)C_\beta^{-1} \Kk^* \pk\\
  &= -2C_\beta^{-1}\Ck^*\rg+\frac 2 {\beta_k}C_\beta^{-1}(u_0-\ukminuseins) 
  + 2 C_\beta \Kk^{-1} \left(\Lk(q_0-\qkminuseins) + \rf\right) \\
    & \quad + \left[ 2 C_\beta^{-1} \Ck^* \Ck- 2 C_\beta 
    +\frac 2 {\beta_k} C_\beta^{-1}\right]\ukminuseins\\
  & = -2C_\beta^{-1}\Ck^*\rg+\frac 2 {\beta_k}C_\beta^{-1}(u_0-\ukminuseins) 
  + 2 C_\beta \Kk^{-1} \left(\Lk(q_0-\qkminuseins) + \rf\right) \\
    & \quad + \left[ 2 C_\beta^{-1} \left(\frac 1 {\beta_k} \idV+ \Ck^* \Ck\right)- 2 C_\beta \right]\ukminuseins   \\
  & = -2C_\beta^{-1}\Ck^*\rg+\frac 2 {\beta_k}C_\beta^{-1}(u_0-\ukminuseins) 
  + 2 C_\beta \Kk^{-1} \left(\Lk(q_0-\qkminuseins) + \rf\right) \,.  
 \end{align*}
 
Since $\beta_k C_\beta \Kk^{-1} \Lk  (\Kk^{-1} \Lk)^* C_\beta = 
\beta_k (C_\beta \Kk^{-1} \Lk) (C_\beta\Kk^{-1} \Lk)^*$  
is positive semidefinite, we can conclude
\begin{eqnarray*}
  \lefteqn{\norm{C_\beta^{-1} \Kk^* \pk}_V}\\
  &\le& 
  \norm{-2C_\beta^{-1}\Ck^*\rg+\frac 2 {\beta_k}C_\beta^{-1}(u_0-\ukminuseins) 
  + 2 C_\beta \Kk^{-1} \left(\Lk(q_0-\qkminuseins) + \rf\right)}_V  \,,
\end{eqnarray*}
and with the estimates
\[
 \|C_\beta^{-1} \Ck^*\|_{G\to V}\leq 1 \,, \quad \|C_\beta^{-1}\|_V\leq \beta^{\frac 12}\,, \quad 
 \quad \text{and} \quad \|C_\beta\|_V \leq \left(\frac 1 {\beta_k} + \norm{\Ck}^2_{V\to G}\right)^{\frac 12}
 \]
we have
 \begin{align*}
 \norm{\Kk^* \pk}_V
 \le& \norm{C_\beta C_\beta^{-1}\Kk^* \pk}_V\\
 \le& \left( \frac 1 {\beta_k} + \norm{\Ck}_{V\to G}^2\right)^{\frac 12}
 \norm{C_\beta^{-1}\Kk^* \pk}_V\\
 \le& 2 \left(\frac 1 {\beta_k}  +\|\Ck\|_{V\to G}^2 \right)^{\frac 12}
    \Bigl\{
      \|\rg\|+ \frac 1 {\sqrt{\beta_k}}\|u_0-\ukminuseins\|_V    \\
  &\qquad\qquad  + \left(\frac 1 {\beta_k}+\|\Ck\|_{V\to G}^2\right)^{\frac 12 }
     \|\Kk^{-1}\left(\Lk(q_0-\qkminuseins)+\rf\|\right)
    \Bigr\}\,,
 \end{align*}
which by Assumption \ref{ass_IRGNM_unifbyA} and $(\qkminuseins,\ukminuseins)\in \mathcal{B}_{\rho}(q_0,u_0)$ yields \eqref{eq_IRGNM_helpadjointresult}.
%
\end{proof}

We will prove inductively that the iterates indeed remain in $\mathcal{B}_{\rho}(q_0,u_0)$, see estimate \eqref{eq_IRGNM_ineq0_sqp} below. Thus, due to Lemma \ref{lem_IRGNM_adjointbounded}, 
	 which remains valid in the discretized setting \eqref{eq_IRGNM_IRGNMsqpdiscrete}, we get uniform boundedness of the dual variables by some sufficiently large $\varrho$, namely 
\begin{equation}\label{eq_IRGNM_rhobar}
\varrho\geq \cadj \left(\|q^\dag -q_0\|_Q+\|u^\dag-u_0\|_V + 1\right)\,.
\end{equation}
Hence we can use exactness of the norm with exponent one as a penalty (cf., e.g., Theorem 5.11 in \cite{GeigerKanzow}), which implies that a solution $(\qk,\uk)$ of \eqref{eq_IRGNM_IRGNMsqp}, \eqref{eq_IRGNM_ggn_state} coincides with the unique solution of the unconstrained minimization problem
\be{eq_IRGNM_aao_var_r}
\begin{aligned}
\min_{(q,u)\in Q\times V} &\varrho \|A'_q(\qkminuseins ,\ukminuseins )(q-\qkminuseins )
+A'_u(\qkminuseins ,\ukminuseins )(u-\ukminuseins )
+A(\qkminuseins ,\ukminuseins )-f\|_{W^*}\\
&\quad +\|C(\ukminuseins )+C'(\ukminuseins )(u-\ukminuseins )-g^\delta\|_G^2
+ {\frac 1 {\beta_k}} (\|q-q_0\|_Q^2+\|u-u_0\|_V^2)\,,
\end{aligned}
\ee
for $\rho$ larger than the norm of the dual variable.
The formulation \eqref{eq_IRGNM_aao_var_r} of \eqref{eq_IRGNM_IRGNMsqp}, \eqref{eq_IRGNM_ggn_state} will be used in the convergence proofs only. For a practical implementation we will directly discretize \eqref{eq_IRGNM_IRGNMsqp}, \eqref{eq_IRGNM_ggn_state}.

The discrete version of \eqref{eq_IRGNM_IRGNMsqp}, \eqref{eq_IRGNM_ggn_state} reads 
\be{eq_IRGNM_IRGNMsqpdiscrete}
\begin{aligned}
\min_{(q,u)\in Q_{h_k}\times V_{h_k}} &
\|C(\uold)+C'(\uold)(u-\uold)-g^\delta\|_G^2\\
& + {\frac 1 {\beta_k}} \left(\|q-q_0\|_Q^2+\|u-u_0\|_{V_{h_k}}^2\right)
\end{aligned}
\ee
\be{eq_IRGNM_ggn_statediscrete}
\text{s.t.} \quad L_{k-1}(q-\qold)+K_{k-1}(u-\uold)
+A(\qold,\uold)=f \qquad \text{in } W_{h_k}^*\,,
\ee
where $(\qold,\uold)=(\qkminuseins, \ukminuseins)=(\qkminuseins_{h_{k-1}}, \ukminuseins_{h_{k-1}})$ is the previous iterate
and we assume again that the norms in $G$ and $W$ as well as $A$ and $C$ 
are evaluated exactly (cf. Assumption \ref{ass_IRGNM_GQnormevaluatedexactly}).

With $\varrho$ chosen sufficiently large such that \eqref{eq_IRGNM_rhobar} holds,
we define the quantities of interest as follows
\be{eq_IRGNM_ggn_I1234}
\begin{aligned}
I_1\colon&\  V\times Q\times V\times \R \to \R\,, &&
	(\uold,q,u,\beta) &&\mapsto \norm{C'(\uold)(u-\uold)+C(\uold)-g^\delta}_G^2 \\
	& && &&  \qquad + \frac 1 \beta \left(\norm{q-q_0}_Q^2+\norm{u-u_0}_V^2 \right)\\
I_2\colon&\ V\times V\to \R\,, && 
	(\uold,u) && \mapsto \norm{C'(\uold)(u-\uold)+C(\uold)-g^\delta}_G^2\\
I_3\colon&\ Q\times V\to \R\,,&&
	(\qold,\uold) &&\mapsto \norm{C(\uold) - g^\delta}_G^2 + \varrho\norm{A(\qold,\uold)-f}_{W^*}\\
I_4\colon&\ Q\times V \to \R\,, &&
	(q,u) && \mapsto \norm{C(u) - g^\delta}_G^2 + \varrho\norm{A(q,u)-f}_{W^*}
\end{aligned}
\ee
(cf. \eqref{eq_IRGNM_aao_I1234reduced}) and
\be{eq_IRGNM_ggn_I1234k}
\begin{aligned}
I_1^k &= I_1(\uoldk,\qk,\uk,\beta_k)\\
I_2^k &= I_2(\uoldk,\uk)\\
I_3^k &= I_3(\qoldk,\uoldk)\\
I_4^k &= I_4(\qk,\uk)\,,
\end{aligned}
\ee
(cf. \eqref{eq_IRGNM_aao_I1234k}),
where $\qoldk$, $\uoldk$ are fixed from the previous step and $\qk$, $\uk$ are coupled by the linearized state equation \eqref{eq_IRGNM_ggn_state} 
(or the third line of \eqref{eq_IRGNM_KKT_sqp} respectively) 
for $\qkminuseins = \qoldk$ and $\ukminuseins = \uoldk$.

Consistently, the discrete counterparts to \eqref{eq_IRGNM_ggn_I1234} and \eqref{eq_IRGNM_ggn_I1234k} 
are
\be{eq_IRGNM_ggn_I1234discrete}
\begin{aligned}
\Ieinsh\colon&\  V\times Q\times V\times \R \to \R\,, &&
	(\uold,q,u,\beta) &&\mapsto \norm{C'(\uold)(u-\uold)+C(\uold)-g^\delta}_G^2 \\
	& && &&  \qquad + \frac 1 \beta \left(\norm{q-q_0}_Q^2+\norm{u-u_0}_{V_{h_k}}^2 \right)\\
\Izweih\colon&\ V\times V\to \R\,, && 
	(\uold,u) && \mapsto I_2(\uold,u)\\
\Idreih\colon&\ Q\times V\to \R\,,&&
	(\qold,\uold) &&\mapsto \norm{C(\uold) - g^\delta}_G^2 + \varrho\norm{A(\qold,\uold)-f}_{W_{h_k}^*}\\
\Ivierh\colon&\ Q\times V \to \R\,, &&
	(q,u) && \mapsto \norm{C(u) - g^\delta}_G^2 + \varrho\norm{A(q,u)-f}_{W_{h_k}^*}
\end{aligned}
\ee
and
\be{eq_IRGNM_ggn_I1234kdiscrete}
\begin{aligned}
\Ieinshk &= \Ieinsh(\uoldk,\qhkk,\uhkk,\beta_k)\\
\Izweihk &= \Izweih(\uoldk,\uhkk)\\
\Idreihk &= \Idreih(\qoldk,\uoldk)\\
\Ivierhk &= \Ivierh(\qhkk,\uhkk)
\end{aligned}
\ee
(cf. \eqref{eq_IRGNM_aao_I1234kdiscrete}), where $\qoldk,\  \uoldk$ are fixed from the previous step, 
since (like in \eqref{eq_IRGNM_ggn_qoldkplus}) we set $\qoldkplus = \qhkk$ and $\uoldkplus = \uhkk$ 
at the end of each iteration step.

\begin{rem}\label{rem_IRGNM_Wnormestimate}
Here, as compared to \eqref{eq_IRGNM_aao_I1234k}, we have removed the $W^*$-norms in the 
definition of $I_1^{k}$ and $I_2^{k}$. 

The $W^*$-norm still appears in $I_3^k$, but only in connection
with the old iterative $(\qoldk, \uoldk)$, such that the only source of error in $I_3^k$ is the evaluation
of the $W^*$ norm. That means that with respect to Section \ref{subsec_IRGNM_ls}, we have replaced
the problematic expression 
\[
  \norm{E(q,u)}_{W^*} - \norm{E(\qh,\uh)}_{W_h^*}
\]
(cf. Section \ref{subsec_estimators_ls}) by an expression of the form
\be{errorE}
  \norm{E(\qold,\uold)}_{W^*} - \norm{E(\qold,\uold)}_{W_h^*}\,,
\ee
For the very typical case $W=V=H_0^1(\Omega)$ (see Section \ref{sec_IRGNM_numericalresults}), 
we can indeed estimate such an error using goal oriented error estimators:

Let $v\in V$, $v_h\in V_h$ solve the equations
\begin{align*}
  (\nabla v,\nabla \varphi)_{L^2(\Omega)} &= \dualV{E(\qold,\uold)}{\varphi} \quad \forall \varphi \in V\,,\\
  (\nabla v_h,\nabla \varphi)_{L^2(\Omega)} &= \dualV{E(\qold,\uold)}{\varphi} \quad \forall \varphi \in V_h\,,
\end{align*}
where $(.,.)_{L^2(\Omega)}$ denotes the scalar product in $L^2(\Omega)$ and $\dualV{.}{.}$ denotes the duality pairing between $V^*$ and $V$.
Then there holds 
\[
  \norm{E(\qold,\uold)}_{V^*} = \norm{\nabla v}_{L^2(\Omega)}\quad\text{and}\quad \norm{E(\qold,\uold)}_{V_h^*} = \norm{\nabla v_h}_{L^2(\Omega)}\,.
\]
We define the functional 
\[
  \Psi(v)\coloneqq \norm{\nabla v}_{L^2(\Omega)}
\]
and the Lagrangian 
\[
  L(v,w) \coloneqq \Psi(v) + \dualV{E(\qold,\uold)}{w} - (\nabla v,\nabla w)_{L^2(\Omega)}\,.
\]
Let $(v,w)$ and $(v_h,w_h)$ be continuous and discrete stationary points of $L$, i.e. 
 \begin{align}
 L_v'(v,w)(\varphi) 
 & = ({\normklein{\nabla v}_{L^2(\Omega)}})^{-1}  (\nabla v,\nabla\varphi)_{L^2(\Omega)}- (\nabla\varphi,\nabla w)_{L^2(\Omega)} =0 &&
 \forall \varphi\in V\,,\label{wequalsvcontinuous}\\
 L_w'(v,w)(\varphi) &= \dualV{E(\qold,\uold)}{\varphi} - (\nabla v,\nabla\varphi)_{L^2(\Omega)}=0 &&
 \forall \varphi\in V\,,\nonumber\\
 L_v'(v_h,w_h)(\varphi) 
 &= ({\normklein{\nabla v_h}_{L^2(\Omega)}})^{-1} (\nabla v_h,\nabla\varphi)_{L^2(\Omega)}
 - (\nabla\varphi,\nabla w_h)_{L^2(\Omega)}=0 &&
  \forall \varphi\in V_h\,,\label{wequalsvdiscrete}\\
 L_w'(v_h,w_h)(\varphi) &= \dualV{E(\qold,\uold)}{\varphi} - (\nabla v_h,\nabla\varphi)_{L^2(\Omega)} =0 &&
  \forall \varphi\in V_h\,. \nonumber
\end{align}
Then (by \eqref{wequalsvcontinuous} and \eqref{wequalsvdiscrete}) we have 
\be{wequalsvcontinuousAnddiscrete}
  w = ({\normklein{\nabla v}_{L^2(\Omega)}})^{-1} v \qquad \text{and} \qquad  w_h = ({\normklein{\nabla v_h}_{L^2(\Omega)}})^{-1} v_h\,.
\ee
For the error \eqref{errorE} then holds
\begin{align*}
  & \norm{E(\qold,\uold)}_{V^*} - \norm{E(\qold,\uold)}_{V_h^*}\\
  &\qquad = \Psi(v)-\Psi(v_h)\\ 
  &\qquad = \tfrac 12 L'(v_h,w_h)(v-\tilde {v_h},w-\tilde {w_h}) + R\\
  &\qquad = \tfrac 12 ({\normklein{\nabla v_h}_{L^2(\Omega)}})^{-1} (\nabla v_h,\nabla(v-\tilde{v_h}))_{L^2(\Omega)}
  -\tfrac 12  (\nabla(v-\tilde{v_h}),\nabla w_h)_{L^2(\Omega)}\\
  &\qquad \qquad + \tfrac 12 \dualV{E(\qold,\uold)}{w-\tilde{w_h}} - \tfrac 12 (\nabla v_h,\nabla(w-\tilde{w_h}))_{L^2(\Omega)}\\
  &\qquad = \tfrac 12 ({\normklein{\nabla v_h}_{L^2(\Omega)}})^{-1} (\nabla v_h,\nabla(v-\tilde{v_h}))_{L^2(\Omega)}
  -\tfrac 12 ({\normklein{\nabla v_h}_{L^2(\Omega)}})^{-1} (\nabla(v-\tilde{v_h}),\nabla v_h)_{L^2(\Omega)}\\
  &\qquad \qquad + \tfrac 12 \dualV{E(\qold,\uold)}{w-\tilde{w_h}} - \tfrac 12 (\nabla v_h,\nabla(w-\tilde{w_h}))_{L^2(\Omega)}\\
  &\qquad  =\tfrac 12 \dualV{E(\qold,\uold)}{w-\tilde{w_h}} - \tfrac 12 (\nabla v_h,\nabla(w-\tilde{w_h}))_{L^2(\Omega)}
\end{align*}
for arbitrary $\tilde{v_h},\tilde {w_h} \in V_h$, where $R$ is a third order remainder term
  (see e.g. \cite{BeckerRannacher,BeckerVexler}, Section \ref{subsec_estimatorsSQP}). 
  Please note that due to the relation \eqref{wequalsvcontinuousAnddiscrete} no additional system of equations has to be solved
  in order to obtain the additional variable $w_h$.  
  
Another way to deal with the discretization error in $I_3^k$ is the following: Tracking the upcoming 
convergence proof (cf. Theorem \ref{thm_IRGNM_conv_sqp}) the reader should realize that the discretization
for $I_{3,h}^k$ does not have to be the same as for $I_{1,h}^k$, $I_{2,h}^k$, such that $I_{3,h}^k$
could be evaluated on a very fine separate mesh, such that $\eta_3^k$ could be neglected. This alternative is
of course, more costly, but since everything else is still done on the adaptively refined (coarser) mesh, 
the proposed method could still lead to an efficient algorithm. 

%
%

The $W^*$-norm also appears in $I_4^{k}$, and unfortunately, in combination with the current $q$ and $u$,
which are subject to discretization, such that in principle we face the same situation as in the least squares formulation
from Section \ref{subsec_IRGNM_ls} (cf. Subsection \ref{subsec_estimators_ls}). Since, however, $\eta_4^k$ only appears in connection with the very
weak assumption $\eta_4^{k}\to0$ as $k\to\infty$ (cf. \eqref{eq_IRGNM_etacond2}), as in \cite{KKVV13}, we save ourselves the computational 
effort of computing an error estimator for $I_4^k$.
\end{rem}

Like in Section \ref{subsec_IRGNM_ls} we need the weak sequential closedness of $\bF$, i.e. Assumption \ref{ass_IRGNM_aao_Fweakseqclosed} 
and the following tangential cone condition (cf. Assumption \ref{ass_IRGNM_aao_tangcone}).

\begin{ass}\label{ass_IRGNM_cond2_sqp_AC}
There exist $0<c_{tc}<1$ and $\rho>0$ such that 
\begin{eqnarray}
\|C(u) - C(\bar{u})  - C'(u)(u - \bar{u})\|_{G} 
&\leq& c_{tc} \| C(u) - C(\bar{u})\|_{G}  
\nonumber\\
\|A(q,u) - A(\bar{q},\bar{u}) 
- A'_q(q,u)(q - \bar{q}) - A'_u(q,u)(u - \bar{u})\|_{W^*}
&\leq& 4c_{tc}^2 \| A(q,u) - A(\bar{q},\bar{u})\|_{W^*}  \,,
\nonumber
\end{eqnarray}
holds for all $(q,u),(\ol q,\ol u)\in \mathcal{B}_{\rho}(q_0,u_0) \subset Q\times V$ 
(cf. Assumption \ref{ass_IRGNM_aao_qdagger}).
\end{ass}

By means of Lemma \ref{lem_IRGNM_adjointbounded} and the Assumptions \ref{ass_IRGNM_aao_Fweakseqclosed},
\ref{ass_IRGNM_cond2_sqp_AC} 
we can now formulate a convergence result like in Theorems 1 and  3 
in \cite{KKVV13} and Theorem \ref{thm_IRGNM_conv_aao} here
for \eqref{eq_IRGNM_IRGNMsqp}. This can be done similarly to the proof of Theorem 3
in \cite{KKVV13}, replacing $F$ there by $\mathbf{F}$ according to \eqref{eq_IRGNM_bfF} and setting
\be{eq_IRGNM_SRchoice}
\begin{aligned}
\cS\left(\vektor{y_C}{y_A},\vektor{\tilde{y}_C}{\tilde{y}_A}\right) & = \|y_C-\tilde{y}_C\|_{G}^2+\varrho \|y_A-\tilde{y}_A\|_{W^*}\,,\\ 
\cR\left(\vektor{q}{u}\right)&=\|q-q_0\|_Q^2+\|u-u_0\|_V^2\,,\\
c_{\cS} &= 2\,,
\end{aligned}
\ee
there. For clarity of exposition we provide the full convergence proof (Theorem \ref{thm_IRGNM_conv_sqp}) without making use of the equivalence to \eqref{eq_IRGNM_aao_var_r} here. 
Only for the convergence rates result Theorem \ref{thm_IRGNM_rates_sqp} we refer to Theorem 4 in \cite{KKVV13} with \eqref{eq_IRGNM_SRchoice} and the equivalence to \eqref{eq_IRGNM_aao_var_r}. 
So in the proof of Theorem \ref{thm_IRGNM_conv_sqp} we will not use minimality wrt \eqref{eq_IRGNM_aao_var_r} but only wrt the original formulation \eqref{eq_IRGNM_IRGNMsqp}, \eqref{eq_IRGNM_ggn_state} (actually we are using KKT points instead of minimizers, but this make no real difference due to convexity of the problem).

%
\begin{thm} \label{thm_IRGNM_conv_sqp}
Let the Assumptions \ref{ass_IRGNM_aao_qdagger}, \ref{ass_auinv}, \ref{ass_IRGNM_GQnormevaluatedexactly}, \ref{ass_IRGNM_aao_Fweakseqclosed} and \ref{ass_IRGNM_cond2_sqp_AC}
with $c_{tc}$ sufficiently small be satisfied and let Assumption \ref{ass_IRGNM_tauthetacond} hold.
For the quantities of interest \eqref{eq_IRGNM_ggn_I1234k} and \eqref{eq_IRGNM_ggn_I1234kdiscrete},
let, further, the estimate \eqref{eq_IRGNM_intcond} hold with $\eta_i$ satisfying Assumption \ref{ass_IRGNM_etacond}. 

Then with $\beta_k$, $h=h_k$ fulfilling \eqref{eq_IRGNM_inexNewton}, $k_*$ selected according to \eqref{eq_IRGNM_stop}, and 
$(\qhkk,\uhkk)$ defined as the primal part of a KKT point of 
\eqref{eq_IRGNM_IRGNMsqpdiscrete}, \eqref{eq_IRGNM_ggn_statediscrete} there holds
 \begin{itemize}
 \item[(i)]
 \beq\label{eq_IRGNM_ineq0_sqp}
  \|\qhkk -q_0\|^2+\|\uhkk -u_0\|^2 \leq \|q^\dag -q_0\|^2+\|u^\dag -u_0\|^2 \qquad \forall 0\le k\leq k_*\,;
 \eeq
 \item[(ii)] $k_*$ is finite\,;
 \item[(iii)] $(\qhkstern,\uhkstern)=(q_{h_{k_*(\delta)}}^{k_*(\delta),\delta},u_{h_{k_*(\delta)}}^{k_*(\delta),\delta})$ converges (weakly) subsequentially to a solution of \eqref{eq_IRGNM_Fqug} as 
 $\delta \to 0$
in the sense that it has a weakly convergent subsequence and each
weakly convergent subsequence converges strongly to a solution of \eqref{eq_IRGNM_Fqug}.
If the solution $(q^\dagger,u^\dagger)$ to \eqref{eq_IRGNM_Fqug} is unique, then $(\qhkstern,\uhkstern)$ 
converges strongly to $(q^\dagger,u^\dagger)$  as $\delta\to0$.
\end{itemize}
\end{thm}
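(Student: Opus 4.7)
The overall strategy is to mimic the argument for Theorem 3 in \cite{KKVV13}, proceeding by induction on $k$ with the inductive hypothesis that $(\qoldk,\uoldk)\in\mathcal{B}_\rho(q_0,u_0)$ and that $(\qhkk,\uhkk)$ together with an adjoint $\zhkk\in W_{h_k}$ form a KKT point of \eqref{eq_IRGNM_IRGNMsqpdiscrete}, \eqref{eq_IRGNM_ggn_statediscrete}. The first observation is that, since Lemma \ref{lem_IRGNM_adjointbounded} applies verbatim in the discretized setting, we may use \eqref{eq_IRGNM_rhobar} to fix $\varrho$ uniformly in $k$, which makes the equivalent penalized problem \eqref{eq_IRGNM_aao_var_r} available as a purely theoretical tool. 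We will however avoid appealing to minimality with respect to \eqref{eq_IRGNM_aao_var_r}, instead using the KKT conditions directly; convexity of the linearized subproblem ensures that KKT points are global minimizers, so nothing is lost.

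\textbf{Step 1: Monotonicity (i).} For the inductive step at level $k$, construct a comparison pair: take $(q^\dag,\tilde u)$ where $\tilde u\in V_{h_k}$ is (an approximation of) the unique solution of the linearized state equation \eqref{eq_IRGNM_ggn_statediscrete} with $q=q^\dag$. By Assumption \ref{ass_IRGNM_cond2_sqp_AC} applied with $(q,u)=(\qoldk,\uoldk)$ and $(\bar q,\bar u)=(q^\dag,u^\dag)$ together with $A(q^\dag,u^\dag)=f$, the residual of this linearized equation at $(q^\dag,u^\dag)$ is controlled by $4c_{tc}^2\|A(\qoldk,\uoldk)-f\|_{W^*}$, so $\tilde u$ is close to $u^\dag$. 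Using that $(\qhkk,\uhkk)$ minimizes the Tikhonov-type functional in \eqref{eq_IRGNM_IRGNMsqpdiscrete} over admissible pairs, compare the objective values at $(\qhkk,\uhkk)$ and $(q^\dag,\tilde u)$ and exploit $C(u^\dag)=g$, \eqref{eq_delta} and Assumption \ref{ass_IRGNM_cond2_sqp_AC} for the $G$-term to obtain \eqref{eq_IRGNM_ineq0_sqp}. The bookkeeping parallels \cite{KKVV13} with the substitutions \eqref{eq_IRGNM_SRchoice}; in particular, the $\frac{1}{\beta_k}(\|q-q_0\|_Q^2+\|u-u_0\|_V^2)$ penalty provides the left-hand side of \eqref{eq_IRGNM_ineq0_sqp}, while the $G$- and $W^*$-residual terms on the right are absorbed into the discrepancy-type bound obtained from the tangential cone condition and Assumption \ref{ass_IRGNM_etacond}.

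\textbf{Step 2: Finiteness of $k_*$ (ii).} If $k_*$ has not been reached, then by definition $I_{3,h}^k>\tau^2\delta^2$, and the inexact Newton step-size rule \eqref{eq_IRGNM_inexNewton} is enforced. Using \eqref{eq_IRGNM_intcond}, \eqref{eq_IRGNM_etacond1} and \eqref{eq_IRGNM_etacond3}, together with Assumption \ref{ass_IRGNM_cond2_sqp_AC} and the KKT-based comparison from Step 1 applied to $I_{2,h}^k$ versus $I_{3,h}^{k+1}$, one derives
\[
I_{3,h}^{k+1} \leq c_2\, I_{3,h}^{k} + r^{k+1}
\]
with $c_2<1$ from \eqref{eq_IRGNM_etacond3}. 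Since $r^k\to0$, iteration yields geometric decay of $I_{3,h}^k$ until the stopping criterion \eqref{eq_IRGNM_stop} is triggered, so $k_*<\infty$.

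\textbf{Step 3: Limiting behaviour as $\delta\to0$ (iii).} Estimate \eqref{eq_IRGNM_ineq0_sqp} yields boundedness of $(\qhkstern,\uhkstern)$ uniformly in $\delta$, so a weakly convergent subsequence exists. By construction $I_{3,h}^{k_*}\leq\tau^2\delta^2$ plus the bound $\eta_3^{k_*}$, while Assumption \ref{ass_IRGNM_cond2_sqp_AC} and the linearized constraint \eqref{eq_IRGNM_ggn_statediscrete} allow to pass from $I_{3,h}^{k_*}$ to $\|A(\qhkstern,\uhkstern)-f\|_{W^*}$ and $\|C(\uhkstern)-g\|_G$, both of which then tend to zero. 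Weak sequential closedness (Assumption \ref{ass_IRGNM_aao_Fweakseqclosed}) identifies the weak limit as a solution of \eqref{eq_IRGNM_Fqug}, and a standard minimum-norm argument (using \eqref{eq_IRGNM_ineq0_sqp} with the candidate solution in place of $(q^\dag,u^\dag)$) promotes weak to strong subsequential convergence.

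\textbf{Main obstacle.} The delicate point is Step 1: replacing the minimality-based comparison used in \cite{KKVV13} by one based only on the KKT system \eqref{eq_ggn_Lz}-\eqref{eq_ggn_Lq}, since the natural comparison pair $(q^\dag,u^\dag)$ is \emph{not} feasible for the linearized constraint. One must produce a nearby admissible $\tilde u\in V_{h_k}$ and quantify $\|\tilde u-u^\dag\|_V$ via Assumption \ref{ass_IRGNM_cond2_sqp_AC} and Assumption \ref{ass_auinv} (to invert $K_{k-1}$ at the discrete level), and then absorb the ensuing crossterms into the constants so that \eqref{eq_IRGNM_taucond} and \eqref{eq_IRGNM_etacond1}-\eqref{eq_IRGNM_etacond3} still suffice. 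The uniform bound on $\varrho$ from Lemma \ref{lem_IRGNM_adjointbounded} is essential here to keep the implicit constants independent of $k$.
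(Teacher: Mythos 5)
Your overall architecture (induction for (i), a recursion on the residual quantities for (ii), boundedness plus weak sequential closedness plus a minimum-norm argument for (iii)) is the same as the paper's, and Steps 2 and 3 are essentially the paper's argument. The genuine problem is Step 1, which you yourself flag as the ``main obstacle'' but do not resolve, and which as sketched would not deliver \eqref{eq_IRGNM_ineq0_sqp}. Your plan is to build a feasible comparison pair $(q^\dagger,\tilde u)$ with $\tilde u$ solving the linearized constraint for $q=q^\dagger$ and to compare objective values by minimality. Then the right-hand side of the minimality inequality contains $\tfrac1{\beta_k}\|\tilde u-u_0\|_V^2$ rather than $\tfrac1{\beta_k}\|u^\dagger-u_0\|_V^2$, and expanding produces a cross term $\tfrac{2}{\beta_k}\|u^\dagger-u_0\|_V\,\|\tilde u-u^\dagger\|_V$ (plus a perturbation of the $G$-term by $\|C'(\uoldk)\|\,\|\tilde u-u^\dagger\|_V$). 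The quantity $\|\tilde u-u^\dagger\|_V$ is indeed controlled, via $K_{k-1}^{-1}$ and the tangential cone condition, by $c_{tc}^2\|A(\qoldk,\uoldk)-f\|_{W^*}\lesssim c_{tc}^2 I_3^k/\varrho$, but the cross term carries the factor $\tfrac1{\beta_k}$, for which \eqref{eq_IRGNM_inexNewton} provides no quantitative lower bound on $\beta_k$; it can therefore neither be absorbed into the budget $\bigl(\tilde{\underline\theta}-2(2c_{tc}^2+(1+2c_{tc})^2/\tau^2)\bigr)I_{3,h}^k$ of \eqref{eq_IRGNM_etacond1} (no $1/\beta_k$ available there) nor dumped into the penalty term without replacing $\|u^\dagger-u_0\|_V^2$ by $(1+\epsilon)\|u^\dagger-u_0\|_V^2$ — and the exact constant $1$ in \eqref{eq_IRGNM_ineq0_sqp} is precisely what is needed to close the induction uniformly in $k$. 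In addition, your route needs uniform bounds on $K_{k-1}^{-1}$ and $C'(\uoldk)$ entering the smallness conditions on $c_{tc}$, and a further layer of bookkeeping because $\tilde u$ must be feasible for the \emph{discrete} constraint in $W_{h_k}^*$ while the tangential cone condition lives in $W^*$.

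The paper's proof of (i) avoids all of this: it never constructs a feasible comparison point. Instead it tests the KKT stationarity conditions \eqref{eq_ggn_Lz}--\eqref{eq_ggn_Lq} of the (continuous, auxiliary) step with the directions $\delta q=q^k-q^\dagger$, $\delta u=u^k-u^\dagger$, uses that $(q^k,u^k)$ itself satisfies the linearized state equation, and lets the infeasibility of $(q^\dagger,u^\dagger)$ appear only through the dual pairing $\scalarWstern{L_{k-1}(q^\dagger-\qoldk)+K_{k-1}(u^\dagger-\uoldk)+A(\qoldk,\uoldk)-f}{p^k}$. That pairing is bounded by Lemma \ref{lem_IRGNM_adjointbounded} together with \eqref{eq_IRGNM_rhobar}, which turns it into $\varrho$ times the $W^*$-norm of the linearization residual at $(q^\dagger,u^\dagger)$ — exactly the object the tangential cone condition (Assumption \ref{ass_IRGNM_cond2_sqp_AC}) and the definition of $I_3^k$ in \eqref{eq_IRGNM_ggn_I1234} are tailored to, with no factors $1/\beta_k$, no $K_{k-1}^{-1}$, and constants matching \eqref{eq_IRGNM_taucond}, \eqref{eq_IRGNM_etacond1} verbatim. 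The discrete iterate is then reached through $I_{1,h}^k=I_{2,h}^k+\tfrac1{\beta_k}(\|\qhkk-q_0\|_Q^2+\|\uhkk-u_0\|_V^2)$, \eqref{eq_IRGNM_intcond} and \eqref{eq_IRGNM_inexNewton}. If you want to keep your feasible-point strategy you would have to either establish a lower bound on $\beta_k$ or reformulate the induction with a $k$-dependent radius, neither of which is provided; as written, the decisive step of the theorem is missing.
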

We mention in passing that this is a new result also in the continuous case $\eta_i^k=0$.
\begin{proof}
\begin{itemize}
\item[(i):] 
We will prove \eqref{eq_IRGNM_ineq0_sqp} by induction. The base case $k=0$ is trivial.
To carry out the induction step, we assume that 
\be{eq_IRGNM_induktionsvoraussetzung}
  \|\qhkminuseinsk -q_0\|_Q^2+\|\uhkminuseinsk-u_0\|_V^2 \leq \|q^\dag -q_0\|_Q^2+\|u^\dag -u_0\|_V^2 
  \qquad \forall 1\le k\le k_*
\ee
holds. We consider a continuous step emerging from discrete $\qoldk = \qhkminuseinsk$, $\uoldk =\uhkminuseinsk$ 
(cf. Figure \ref{fig:iteration}),
i.e. let $(\qk,\uk)$ be a solution to \eqref{eq_IRGNM_IRGNMsqp} for 
$\qkminuseins = \qoldk = \qhkminuseinsk$. 
Then the KKT conditions $\L'(\qk,\uk,\zk)=0$ 
(cf. \eqref{eq_ggn_Lz}-\eqref{eq_ggn_Lq}) imply
\begin{align*}
0&= \scalarG{\Chk(\uk-\uhkminuseinsk) +\rg}{\Chk \delta u}
+ {\frac 1 {\beta_k}} \left[\scalarQ{\qk -q_0}{\delta q} +\scalarV{\uk -u_0}{\delta u}\right]\\
& \quad +\frac12 \scalarWstern{\Lhk \delta q+\Khk \delta u}{\pk}
\end{align*}	
for all $\delta q \in Q$ and $\delta u \in V$, where we have used the same abbreviations as in \eqref{eq_IRGNM_ls_abbrevKL} and
\eqref{eq_IRGNM_abbrevCr}, as well as $\pk$ defined by \eqref{eq_IRGNM_pdef}.

Setting $\delta q=\qk-q^\dag$, $\delta u=\uk-u^\dag$, this yields
\begin{align*}
0&= \| \Chk(\uk-\uhkminuseinsk) + \rg\|_G^2\\
&\quad -\scalarG{\Chk(\uk-\uhkminuseinsk) +\rg}{\Chk(u^\dag-\uhkminuseinsk) +\rg}\\
&\quad  + {\frac 1 {\beta_k}} \|\qk -q_0\|_Q^2
-{\frac 1 {\beta_k}} \scalarQ{ \qk -q_0}{q^\dag-q_0}
  + {\frac 1 {\beta_k}} \|\uk -u_0\|^2
-{\frac 1 {\beta_k}} \scalarV{\uk -u_0}{u^\dag-u_0}\\
&\quad-\frac12 \scalarWstern{\Lhk(q^\dag-\qhkminuseinsk) + \Khk(u^\dag-\uhkminuseinsk)+\rf}{\pk}\,,
\end{align*}
where we have used the fact that $(\qk,\uk)$ satisfies the linearized state equation \eqref{eq_IRGNM_ggn_state}, 
i.e. 
\[
\Lhk(\qk-\qhkminuseinsk)\Khk(\uk-\uhkminuseinsk)+\rf=0\,.
\]
Hence by Cauchy-Schwarz and the fact that $ab\le \frac12 a^2+\frac12 b^2$ for all $a,b\in \R$ 
\begin{align*}
I_1^k 
&\le \normGklein{\Chk(\uk-\uhkminuseinsk) +\rg}\normGklein{\Chk(u^\dag-\uhkminuseinsk) +\rg}\\
&\quad + {\frac 1 {\beta_k}} \normQklein{ \qk -q_0}\normQklein{q^\dag-q_0} +{\frac 1 {\beta_k}} \normVklein{\uk -u_0}\normVklein{u^\dag-u_0}\\
&\quad +  \frac12 \normWstern{\Lhk(q^\dag-\qhkminuseinsk) + \Khk(u^\dag-\uhkminuseinsk)+\rf}\normWsternklein{\pk}\\
&\le \frac 12 \normGklein{\Chk(\uk-\uhkminuseinsk) +\rg}^2 + \frac 12 \normGklein{\Chk(u^\dag-\uhkminuseinsk) +\rg}^2\\
&\quad + {\frac 1 {2\beta_k}} \normQklein{ \qk -q_0}^2 + {\frac 1 {2\beta_k}} \normQklein{q^\dag -q_0}^2 
 + {\frac 1 {2\beta_k}} \normVklein{\uk -u_0}^2 + {\frac 1 {2\beta_k}} \normVklein{u^\dag -u_0}^2\\
&\quad +  \frac12 \normWstern{\Lhk(q^\dag-\qhkminuseinsk) + \Khk(u^\dag-\uhkminuseinsk)+\rf}\normWsternklein{\pk}\\
&= \frac 12 I_1^k + \frac 12 \normGklein{\Chk(u^\dag-\uhkminuseinsk) +\rg}^2\\
&\quad + {\frac 1 {2\beta_k}} \normQklein{q^\dag -q_0}^2 
 + {\frac 1 {2\beta_k}} \normVklein{u^\dag -u_0}^2\\
&\quad +  \frac12 \normWstern{\Lhk(q^\dag-\qhkminuseinsk) + \Khk(u^\dag-\uhkminuseinsk)+\rf}\normWsternklein{\pk}\,,
\end{align*}
which dividing by 2 and applying Lemma \ref{lem_IRGNM_adjointbounded} with \eqref{eq_IRGNM_rhobar},
and \eqref{eq_IRGNM_induktionsvoraussetzung} leads to 
\begin{align}
I_1^k &\le \normGklein{\Chk(u^\dag-\uhkminuseinsk) +\rg}^2 + {\frac 1 {\beta_k}} \normQklein{q^\dag -q_0}^2 
 + {\frac 1 {\beta_k}} \normVklein{u^\dag -u_0}^2\nonumber\\
&\quad + \normWstern{\Lhk(q^\dag-\qhkminuseinsk) + \Khk(u^\dag-\uhkminuseinsk)+\rf}\normWsternklein{\pk}\nonumber\\
&\le \normGklein{C'(\uhkminuseinsk)(u^\dag-\uhkminuseinsk) + C(\uhkminuseinsk)-g^\delta}^2 + {\frac 1 {\beta_k}} \normQklein{q^\dag -q_0}^2 
 + {\frac 1 {\beta_k}} \normVklein{u^\dag -u_0}^2\nonumber\\
&\quad + \varrho \normWstern{A_q'(\qhkminuseinsk)(q^\dag-\qhkminuseinsk) + A_u'(\uhkminuseinsk)(u^\dag-\uhkminuseinsk)+A(\qhkminuseinsk,\uhkminuseinsk)-A(q^\dag,u^\dag)}
\label{eq_IRGNM_helpestimate1}
\end{align}
for all $k<k_*$.
%

The rest of the proof basically follows the lines of the proof of Theorem 3
in \cite{KKVV13}
with the choice \eqref{eq_IRGNM_SRchoice}, but for convenience of the reader we will follow through the proof
anyway. 

Using the fact that $(a+b)^2\le 2a^2+2b^2$ for all $a,b\in\R$ and Assumption \ref{ass_IRGNM_cond2_sqp_AC} from 
\eqref{eq_IRGNM_helpestimate1} we get 
\begin{align*}
I_1^k &\le 2c_{tc}^2\normGklein{C(\uhkminuseinsk)-C(u^\dag)}^2 + 2\delta^2+ {\frac 1 {\beta_k}} \normQklein{q^\dag -q_0}^2 
 + {\frac 1 {\beta_k}} \normVklein{u^\dag -u_0}^2\\
&\quad + 4c_{tc}^2 \varrho \normWstern{A(\qhkminuseinsk,\uhkminuseinsk)-A(q^\dag,u^\dag)} \\
&\le 4c_{tc}^2\normGklein{C(\uhkminuseinsk)-g^\delta}^2  +  2(1+2c_{tc}^2) \delta ^2+ {\frac 1 {\beta_k}} \normQklein{q^\dag -q_0}^2 
 + {\frac 1 {\beta_k}} \normVklein{u^\dag -u_0}^2\\
&\quad + 4c_{tc}^2 \varrho \normWstern{A(\qhkminuseinsk,\uhkminuseinsk)-f} \\
&\le 4c_{tc}^2 I_3^k + \frac {2(1+2c_{tc}^2)}{\tau^2} I_{3,h}^k + {\frac 1 {\beta_k}} \normQklein{q^\dag -q_0}^2 
 + {\frac 1 {\beta_k}} \normVklein{u^\dag -u_0}^2\\
&\le 2\left(2c_{tc}^2 + \frac {1+2c_{tc}^2}{\tau^2}\right) I_{3,h}^k +4c_{tc}^2\eta_3^k + {\frac 1 {\beta_k}} \left( \normQklein{q^\dag -q_0}^2 
 + \normVklein{u^\dag -u_0}^2 \right)
\end{align*}
for all $k<k_*$. 
This together with \eqref{eq_IRGNM_inexNewton} and 
the fact that $I_{1,h}^k = I_{2,h}^k + \frac 1 {\beta_k} \left(\normQklein{\qhk-q_0}^2 + \normVklein{\uhk-u_0}^2\right)$
yields
\begin{align*}
&\tilde{\underline\theta} I_{3,h}^{k} 
+ {\frac 1 {\beta_k}} (\|\qhkk -q_0\|^2+\|\uhkk -u_0\|^2)\\
& \le I_{2,h}^k + {\frac 1 {\beta_k}} (\|\qhkk -q_0\|^2+\|\uhkk -u_0\|^2)\\
& \le I_1^k + \eta_1^k\\
& \le 2\left(2c_{tc}^2+\frac{(1+2c_{tc})^2}{\tau^2}\right) I_{3,h}^{k}
+ {\frac 1 {\beta_k}} (\|q^\dag -q_0\|^2+\|u^\dag -u_0\|^2) 
+ \eta_1^{k} + 4c_{tc}^2 \eta_3^{k}\,.
\end{align*}
hence by \eqref{eq_IRGNM_etacond1} we get \eqref{eq_IRGNM_ineq0_sqp}.
\item[(ii):]
By the triangle inequality as well as \eqref{eq_IRGNM_inexNewton},
Assumption \ref{ass_IRGNM_cond2_sqp_AC} and the fact that $(\qk,\uk)$ satisfies the linearized state equation \eqref{eq_IRGNM_ggn_state}, 
we have
\begin{align*}
I_4^k &= \normGklein{C(\uk)-g^\delta}^2 + \varrho \normWsternklein{A(\qk,\uk)-f}\\
&\le 2\normGklein{C'(\uoldk)(\uk-\uoldk) + C(\uoldk) -g^\delta}^2
    + 2\normGklein{C'(\uoldk)(\uk-\uoldk)+C(\uoldk)-C(\uk)}^2\\
&\quad  + \varrho \normWsternklein{A'_q(\qoldk,\uoldk)(\qk-\qoldk) 
+A_u'(\qoldk,\uoldk)(\uk-\uoldk) + A(\qoldk,\uoldk)-f}\\
&\quad + \varrho \normWsternklein{A'_q(\qoldk,\uoldk)(\qk-\qoldk)
+A_u'(\qoldk,\uoldk)(\uk-\uoldk) + A(\qoldk,\uoldk)-A(\qk,\uk)}\\
&\le 2 I_2^k + 2 c_{tc}^2 \normGklein{C(\uk)-C(\uoldk)}^2 + 4 c_{tc}^2\varrho \normWsternklein{A(\qoldk,\uoldk)-A(\qk,\uk)}\\
&\le 2 I_2^k + 4 c_{tc}^2 \left( \normGklein{C(\uk)-g^\delta}^2 + \normGklein{C(\uoldk)-g^\delta}^2 \right)\\
&\quad +  4 c_{tc}^2\varrho \left(\normWsternklein{A(\qoldk,\uoldk)-f} + \normWsternklein{A(\qk,\uk)-f}\right)\\
&\le 2\left(\tilde{\overline\theta} I_{3,h}^{k} +\eta_2^{k}\right) 
+ 4c_{tc}^2 (I_4^k+I_3^k)\,,
\end{align*}
which implies
\[
  I_4^k \le \frac 1 {1-4c_{tc}^2}\left(2 {\tilde{\overline\theta}} I_{3,h}^{k} + 2 \eta_2^k + 4c_{tc}^2 I_3^k \right)\,.
\]
From this, using \eqref{eq_IRGNM_intcond} and \eqref{eq_IRGNM_etacond3} we can deduce exactly as in the proof of Theorem 3 (ii) in \cite{KKVV13} that 
\be{estI4_2}
  I_{4,h}^k \le c_2^{k}I_{4,h}^0+\sum_{j=0}^{k-1}c_2^j a^{k-j}\,.
\ee
with 
\be{ai}
  a^i\coloneqq \frac 1 {1-4c_{tc}^2} \left((2 {\tilde{\overline\theta}} + 4c_{tc}^2)r^i + 2 \eta_2^i + 4c_{tc}^2 \eta_3^i \right) + \eta_4^i \quad\forall i\in \{1,2,\dots,k\}\,.
\ee
So since the right hand side of \eqref{estI4_2} tends to zero as $k\to\infty$, $I_{4,h}^k$ and therewith $I_{3,h}^k$ (cf. \eqref{eq_IRGNM_etacond3}) eventually has to fall below $\tau^2\delta^2$ for some finite index $k$. 

\item[(iii):]
With \eqref{eq_delta}, \eqref{eq_IRGNM_intcond}, \eqref{eq_IRGNM_etacond2} and the definition of $k_*$, we have 
\be{eq_IRGNM_aao_Fconv}
\begin{aligned}
  \normGklein{C(\uoldkstern)-g}^2 + \varrho \normWsternklein{A(\qoldkstern,\uoldkstern)-f}
  &\le 2 I_3^{k_*} + 2\delta^2\\ 
  &\le 2\left( I_{3,h}^{k_*} + \eta_3^k + \delta^2 \right)\\
  &\le 2\left( (1+c_1)I_{3,h}^{k_*} + \delta^2 \right) \\ 
  &\le 2\delta^2\left( (1+c_1)\tau^2 + 1\right) \to 0
\end{aligned}
\ee
as $\delta \to 0$. Thus, due to (ii) \eqref{eq_IRGNM_ineq0_sqp}  $(\qoldkstern,\uoldkstern) = (q^{\delta,k_*-1}_{h_{k_*-1}},u^{\delta,k_*-1}_{h_{k_*-1}})$ 
has a weakly convergent subsequence $\left((\qoldksterndeltal,\uoldksterndeltal)\right)_{l\in \N}$ 
 and with Assumption \ref{ass_IRGNM_aao_Fweakseqclosed} and \eqref{eq_IRGNM_aao_Fconv} the limit of every weakly 
convergent subsequence is a solution to \eqref{eq_IRGNM_Fqug}.
Strong convergence of any weakly convergent subsequence again follows by a
standard argument like in \cite{KKVV13} using \eqref{eq_IRGNM_ineq0_sqp}.
\end{itemize}
\end{proof}
\begin{cor}\label{cor_IRGNM_adjointbounded}
The sequence $(\zk)_{k\in\N,k\le k_*}$ is bounded, i.e.
\[
 \bar{\varrho}=\sup_{k\leq k_*} \|\zk\|_W  \le \cadj (\norm{q^\dag-q_0}^2+\norm{u^\dag-u_0}^2+1)
\]
\end{cor}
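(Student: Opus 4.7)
The plan is to derive this corollary as a direct consequence of two results already established in the excerpt: the monotonicity bound \eqref{eq_IRGNM_ineq0_sqp} from Theorem \ref{thm_IRGNM_conv_sqp}(i), and Lemma \ref{lem_IRGNM_adjointbounded}, which (as explicitly noted by the authors just before equation \eqref{eq_IRGNM_rhobar}) remains valid in the discretized setting \eqref{eq_IRGNM_IRGNMsqpdiscrete}--\eqref{eq_IRGNM_ggn_statediscrete}.

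First I would fix $k$ with $1\le k\le k_*$ and recall that $\zk$ is the adjoint variable associated with the discrete KKT system for step $k$, where $(\qoldk,\uoldk)=(\qhkminuseinsk,\uhkminuseinsk)$ is the previous iterate. Applying \eqref{eq_IRGNM_ineq0_sqp} at index $k-1$ gives
\[
\|\qoldk-q_0\|_Q^2+\|\uoldk-u_0\|_V^2
=\|\qhkminuseinsk-q_0\|_Q^2+\|\uhkminuseinsk-u_0\|_V^2
\le \|q^\dag-q_0\|_Q^2+\|u^\dag-u_0\|_V^2,
\]
which, together with Assumption \ref{ass_IRGNM_aao_qdagger}, places $(\qoldk,\uoldk)$ in $\mathcal{B}_\rho(q_0,u_0)$ and thus makes Lemma \ref{lem_IRGNM_adjointbounded} applicable.

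Next I would invoke Lemma \ref{lem_IRGNM_adjointbounded} in its discrete form to obtain
\[
\|\zk\|_W \le \cadj\bigl(\|\qoldk-q_0\|_Q+\|\uoldk-u_0\|_V+1\bigr).
\]
It then remains to turn the linear norm terms on the right into the squared norms appearing in the statement. This is purely an elementary inequality step: for $a,b\ge 0$ one has $a+b\le \sqrt{2(a^2+b^2)}\le 1+(a^2+b^2)$ (the second inequality follows from $(\sqrt{2(a^2+b^2)}-1)^2\ge 0$). Applied to $a=\|\qoldk-q_0\|_Q$, $b=\|\uoldk-u_0\|_V$ and combined with the previous display, this yields
\[
\|\zk\|_W \le \cadj\bigl(\|q^\dag-q_0\|_Q^2+\|u^\dag-u_0\|_V^2+2\bigr),
\]
and absorbing the constant into a redefined $\cadj$ gives the stated bound. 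Taking the supremum over $k\le k_*$ and observing that the base case $k=0$ is covered trivially (or by treating $\zk$ as only defined for $k\ge 1$) finishes the argument.

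I do not expect any real obstacle: the substance is contained entirely in Theorem \ref{thm_IRGNM_conv_sqp}(i) and Lemma \ref{lem_IRGNM_adjointbounded}. The only point requiring minor care is the transition from the linear-in-norm bound furnished by the lemma to the squared-norm bound stated in the corollary, handled by the elementary inequality above.
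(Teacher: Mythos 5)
Your proposal matches the paper's own proof, which simply combines Theorem \ref{thm_IRGNM_conv_sqp}(i) with Lemma \ref{lem_IRGNM_adjointbounded} (in its discrete form). Your extra elementary step converting the linear-in-norm bound of the lemma to the squared-norm bound of the corollary (at the cost of absorbing a harmless additive constant into $\cadj$) is a reasonable way to reconcile the paper's slightly inconsistent statement, and does not change the argument.
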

\begin{proof}
The assertion follows directly from Theorem \ref{thm_IRGNM_conv_sqp} (i) and Lemma \ref{lem_IRGNM_adjointbounded}.
\end{proof}

The convergences rates from Theorem 4 in \cite{KKVV13} 
also hold for the all-at-once formulation \eqref{eq_IRGNM_Fqug}, due to equivalence with \eqref{eq_IRGNM_aao_var_r} which we 
formulate in the following theorem. 

Instead of source conditions we use variational source conditions (cf., e.g., \cite{Flemming2010,HohageWerner,KH10,PoeschlDiss}) due to the nonquadratic penalty term in \eqref{eq_IRGNM_aao_var_r}.

\begin{ass}\label{ass_IRGNM_vi_aao}
Let 
\begin{eqnarray}\label{eq_IRGNM_source2_sqp}
\lefteqn{
|\scalarQ{q^\dag-q_0}{q -q^\dag}+\scalarV{u^\dag-u_0}{u -u^\dag}|}
\nonumber\\
&\leq& c \, \sqrt{\|q-q^\dag\|_Q^2+\|u-u^\dag\|_V^2} 
\kappa\left( \frac{\|C(u)-C(u^\dag)\|_{G}^2+\varrho \|A(q,u)-A(q^\dag,u^\dag)\|_{W^*}}{\|q-q^\dag\|_Q^2
+\|u-u^\dag\|_V^2}\right) \,, \\ 
&&(q,u)\in \mathcal{D}(A)\,, \ u\in \mathcal{D}(C)\,,
\end{eqnarray}
with $\varrho$ sufficiently large (cf. \eqref{eq_IRGNM_rhobar}) {  and independent from $q,u$},
hold with some $\kappa\colon \R^+\to\R^+$ such that $\kappa^2$ is strictly monotonically increasing on
$(0, \|\bF(q^\dag,u^\dag)\|_{Q\times V}^2]$,
$\phi$ defined by $\phi^{-1}(\lambda)=\kappa^2(\lambda)$ is convex and $\psi$ defined by
  $\psi(\lambda)=\kappa(\lambda)\sqrt{\lambda}$ is  strictly monotonically increasing on
  $(0,\|\bF(q^\dag,u^\dag)\|_{Q\times V}^2]$. 
\end{ass}

\begin{thm} \label{thm_IRGNM_rates_sqp}
Let the conditions of Theorem \ref{thm_IRGNM_conv_sqp} and additionally
the variational inequality Assumption \ref{ass_IRGNM_vi_aao} be fulfilled.
 
Then there exists a $\bar{\delta}>0$ and a constant $\bar{C}>0$ independent of $\delta$ such that 
for all $\delta\in (0,\bar{\delta}]$ß
the convergence rates
\begin{equation}\label{eq_IRGNM_rates_sqp}
   \normQklein{\qold^{k_*}-\qdag}^2+ \normVklein{\uold^{k_*}-u^\dag}^2
  = {\cal O}\left(\frac {\delta^2} {\psi^{-1}(\bar{C}\delta)}\right)\,,
   \end{equation}
with  $\qold^{k_*}= q_{\beta_{k_*-1},h_{k*-1}}^{\delta,k_*-1}$,
      $\uold^{k_*} = u_{\beta_{k_*-1},h_{k*-1}}^{\delta,k_*-1}$
are obtained.
\end{thm}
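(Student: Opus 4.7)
The plan is to reduce this statement to Theorem~4 of \cite{KKVV13}, specialized to the all-at-once setting via the substitutions $F\leftrightarrow\bF$ from \eqref{eq_IRGNM_bfF} and $\cS,\cR,c_\cS$ from \eqref{eq_IRGNM_SRchoice}. My first step would be to recast the discretized KKT point $(\qhkk,\uhkk)$ of \eqref{eq_IRGNM_IRGNMsqpdiscrete}--\eqref{eq_IRGNM_ggn_statediscrete} as the minimizer of the unconstrained penalty formulation \eqref{eq_IRGNM_aao_var_r}, where $\varrho$ has been chosen as in \eqref{eq_IRGNM_rhobar}. Corollary~\ref{cor_IRGNM_adjointbounded} is essential at this point: it guarantees that the adjoint variable $\zk$ remains uniformly bounded by $\bar{\varrho}$ along the entire sequence of iterates, so the $L^1$-penalty is genuinely exact (by the cited result of \cite{GeigerKanzow}) and the constrained and unconstrained formulations have the same primal solution.

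Once this equivalence is in place, \eqref{eq_IRGNM_aao_var_r} has the structure of a generalized linearized Tikhonov step with data misfit $\cS$ and penalty $\cR$. The misfit $\cS$ is the sum of a squared $G$-norm and a weighted $W^*$-norm and therefore satisfies a triangle-type bound with constant $c_\cS=2$. Assumption~\ref{ass_IRGNM_vi_aao} is precisely the variational source condition used in Theorem~4 of \cite{KKVV13}, now formulated with $\bF$ in place of $F$. With these identifications every hypothesis of that theorem is in force, so its conclusion yields \eqref{eq_IRGNM_rates_sqp} directly.

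The computational core of the \cite{KKVV13} argument, which I would verify carries over, runs as follows. Minimality of the linearized Tikhonov iterate at step $k$ against the comparison element $(\qdag,u^\dag)$ produces an inequality bounding the penalty term $\cR(\qk,\uk)$ by $\cR(\qdag,u^\dag)$ plus a $\beta_k$-weighted difference of linearized misfits $\cS$. The tangential cone condition (Assumption~\ref{ass_IRGNM_cond2_sqp_AC}) is then invoked to replace the linearized residuals by true residuals up to factors depending on $c_{tc}$. Next, the variational inequality \eqref{eq_IRGNM_source2_sqp} is applied to bound $\cR(\qk,\uk)-\cR(\qdag,u^\dag)$ by an expression involving $\kappa$ of a ratio of true residuals. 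Combining with the inexact Newton condition \eqref{eq_IRGNM_inexNewton} and the discrepancy stopping rule \eqref{eq_IRGNM_stop}, and applying a Young-type inequality induced by convexity of $\phi$, gives an estimate that, after inversion of $\psi$, becomes the rate \eqref{eq_IRGNM_rates_sqp}.

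The main obstacle is the bookkeeping between the auxiliary continuous iterates $(\qk,\uk)$, to which the variational source condition applies directly, and the actually computed discrete iterates $(\qhkk,\uhkk)$, which carry the minimality and KKT relations. The discretization-error quantities $\eta_i^k$ from \eqref{eq_IRGNM_intcond}, controlled by Assumption~\ref{ass_IRGNM_etacond}, are what allow one to transfer inequalities between the two sequences along the structure indicated in Figure~\ref{fig:iteration}. This transfer is already carried out in the proof of Theorem~4 of \cite{KKVV13}, and once the correspondence $F\leftrightarrow\bF$ together with \eqref{eq_IRGNM_SRchoice} is substituted everywhere, it goes through without essential change, giving \eqref{eq_IRGNM_rates_sqp} for all sufficiently small $\delta\in(0,\bar{\delta}]$.
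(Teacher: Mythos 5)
Your proposal takes essentially the same route as the paper: the paper's proof is exactly the reduction to Theorem 4 of \cite{KKVV13} with the identifications \eqref{eq_IRGNM_SRchoice}, invoking the exactness-of-penalty equivalence with \eqref{eq_IRGNM_aao_var_r} (justified by Lemma \ref{lem_IRGNM_adjointbounded}/Corollary \ref{cor_IRGNM_adjointbounded} and \eqref{eq_IRGNM_rhobar}) together with Theorem \ref{thm_IRGNM_conv_sqp} (in particular \eqref{eq_IRGNM_ineq0_sqp}) and \eqref{eq_IRGNM_aao_Fconv}. Your extra sketch of the internal mechanics of the cited theorem and of the $\eta_i^k$ bookkeeping is consistent with, and merely fills in, what the paper delegates to the reference.
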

\begin{proof} With \eqref{eq_IRGNM_SRchoice} the rate follows directly from Theorem 4 in \cite{KKVV13} 
due to Theorem \ref{thm_IRGNM_conv_sqp} 
(especially \eqref{eq_IRGNM_ineq0_sqp})
and \eqref{eq_IRGNM_aao_Fconv}.
\end{proof}

\begin{rem}\label{rem_IRGNM_estu}
In fact, no regularization of the $u$ part would be needed for proving just stability of the single Gauss-Newton steps,
since by Assumption \ref{ass_auinv} the terms 
$\varrho \|A'_q(\qkminuseins ,\ukminuseins )(q-\qkminuseins )
+A'_u(\qkminuseins ,\ukminuseins )(u-\ukminuseins )
+A(\qkminuseins ,\ukminuseins )-f\|_{W^*}$ 
and ${\frac 1 {\beta_k}} \|q-q_0\|_Q^2$ in \eqref{eq_IRGNM_aao_var_r} as regularization term together ensure weak compactness of the level sets of the Tikhonov functional (cf. Item 6 in Assumption 2 in \cite{KKVV13}). 
However, we require even uniform boundedness of $\uhkk$  in order to uniformly bound the dual variable and come up with a penalty parameter $\varrho$ that is independent of $k$, cf. the discrete version of Lemma \ref{lem_IRGNM_adjointbounded}.
Using 
the equality constraint \eqref{eq_IRGNM_ggn_state} (for $\qkminuseins = \qhkminuseinsk$)
together with the tangential cone condition Assumption \ref{ass_IRGNM_cond2_sqp_AC} for $c_{tc}^2<\frac 1 8$
would only enable to bound $A(\qk,\uk)-f$:
\begin{align*}
&\normWsternklein{A(\qk, \uk)-f} \\
&\qquad = \normWsternklein{\Lhk(\qk-\qhkminuseinsk) +\Khk(\uk-\uhkminuseinsk)+ A(\qhkminuseinsk,\uhkminuseinsk) - A(\qk, \uk)}\\
&\qquad \le 4c_{tc}^2 \normWsternklein{A(\qhkminuseinsk,\uhkminuseinsk)-A(\qk,\uk)}\\
&\qquad  \le 4c_{tc}^2 \normWsternklein{A(\qhkminuseinsk,\uhkminuseinsk)-f} + 4c_{tc}^2 \normWsternklein{A(\qk,\uk)-f}\,,
\end{align*}
such that
\[
\normWsternklein{A(\qk, \uk)-f} \le \frac {4c_{tc}^2} {1-4c_{tc}^2} \normWsternklein{A(\qhkminuseinsk,\uhkminuseinsk)-f}\,.
\] 
However, without error estimators on the difference between 
$\|A(\qk,\uk)-f\|_{W^*}$ and its discretized version, this does not give a recursion 
\[
	\|A(\qhkk,\uhkk)-f\|_{W^*}\leq c\|A(\qhkminuseinsk,\uhkminuseinsk)-f\|_{W^*}
\]
(from which, by uniform boundedness of $\qhkk$ and Assumption \ref{ass_IRGNM_unifbyA} we could conclude uniform boundedness of $\uhkk$).
 
Thus, in order to obtain uniform boundedness of $\uhkk$ we introduce the term ${\frac 1 {\beta_k}} \|u-u_0\|_V^2$ here for theoretical purposes.
For our practical computations we will assume that the error by discretization between
$\|A(\qk,\uk)-f\|_{W^*}$ and $\|A(\qhkk,\uhkk)-f\|_{W^*}$ is small enough so that the mentioned gap in this argument for uniform boundedness of $\uhkk$ can be neglected and the part  ${\frac 1 {\beta_k}} \|u-u_0\|_V^2$ of the regularization term is omitted.
\end{rem}

\subsection{Computation of the error estimators}\label{subsec_estimatorsSQP}

Since -- different to \cite{KKVV13} -- $\uold$ ist not subject to new discretization in the $k$th step here, the computation of the error estimators is easier and can be done exactly as in
\cite{GKV} and \cite{KKV10}. Thus we omit the arguments $\qold$ and $\uold$ in the quantities of
interest in this subsection and 
we also omit the iteration
index $k$ and the explicit dependence on $\beta$.

\textbf{Error estimator for $I_1$:}
We consider
\[
   I_1(q,u)=\norm{C'(\uold)(u-\uold)+C(\uold)-g^\delta}_G^2
   + \alpha (\norm{q-q_0}_G^2+\norm{u-u_0}_V^2)
\]

and the Lagrange functional
\beq\label{eq_IRGNM_LagrangianSQP}
   \L(q,u,z)\coloneqq I_1(q,u)+h(z)-B(q,u)(z)\,,
\eeq
with $h \in W^*$ and $B(q,u)\in W^*$ defined as
\[
   h\coloneqq f-A(\qold,\uold)-A_q'(\qold,\uold)(\qold) - A_u'(\qold,\uold)(\uold)
\]
and
\[
   B(q,u)\coloneqq A_q'(\qold,\uold)(q)+A_u'(\qold,\uold)(u)\,.
\]
There holds a similar result to Proposition 1 in \cite{KKVV13}(see also \cite{GKV}), which
allows to estimate the difference $I_1(q,u)-I_1(\qh,\uh)$ by computing a discrete stationary point
$\xh=(\qh,\uh,\zh)\in X_h=Q_h\times V_h\times W_h$ of $\L$. This is done by solving the equations
\begin{align}
  \zh\in W_h:\qquad & A_u'(\qold,\uold)(du)(\zh) = I_{1,u}'(\qh,\uh)(du) &\forall du\in V_h \label{eq_statpoint1}\\
  \uh\in V_h: \qquad & A_q'(\qold,\uold)(\qh)(dz)+A_u'(\qold,\uold)(\uh)(dz) = h(dz)
  &\forall dz\in W_h\label{eq_statpoint2}\\
  \qh\in Q_h: \qquad & I_q'(\qh,\uh)(dq) = A_q'(\qold,\uold)(dq)(\zh)  &\forall dq\in Q_h\label{eq_statpoint3}
\end{align}
Then the error estimator $\eta_1$ for $I_1$ can be computed as
\begin{equation}\label{eq_IRGNM_eta1}
  I_1 - I_{1,h} =  I_1(q,u)-I_2(\qh,\uh) \approx \frac 12 \L'(\xh)(\pi_h\xh-\xh) = \eta_1
\end{equation}
(cf. \cite{KKVV13,KKV10,GKV}).

{ 
\begin{rem}
 Please note that the equations \eqref{eq_statpoint1}-\eqref{eq_statpoint3} are solved anyway in the process of solving the optimization problem \eqref{eq_IRGNM_IRGNMsqp}, \eqref{eq_IRGNM_ggn_state}.
\end{rem}
}

\textbf{Error estimator for $I_2$:}
The computation of the error estimator for $I_2$ can be done similarly to the computation of
$\eta_2$ in \cite{KKVV13} (or $\eta^I$ in \cite{GKV}) by means of
the Lagrange functional $\L$. We consider
\[
   I_2(u)\coloneqq\norm{C'(\uold)(u-\uold)+C(\uold)-g^\delta}_G^2
\]
and compute a discrete stationary point $\yh\coloneqq (\xh, \xeinsh)\in X_h\times X_h$ of the auxiliary
Lagrange functional
\[
   \M(y)\coloneqq I_2(u)+\L'(x)(x_1)
\]
by solving the equations
\begin{align}
   \xh\in X_h: \qquad & \L'(\xh)(dx_1) = 0 &\forall dx_1\in X_h\nonumber\\
   \xeinsh \in X_h:\qquad &\L''(\xh)(\xeinsh,dx) = -I_2'(\uh)(du) &\forall dx\in X_h\label{eq_statpointM}\\
\end{align}
(with $dx=(dq,du,dz)$). Then we compute the error estimator for $I_2$ by
\[
  \eta_2\coloneqq \frac 12 \M'(\yh)(\pi_h\yh-\yh)\approx I_2(u)-I_2(\uh) = I_2-I_{2,h}\,.
\]
{
 
\begin{rem}
 To avoid the computation of second order information in \eqref{eq_statpointM} we would like to refer to \cite{BeckerVexler}, where 
 \eqref{eq_statpointM} is replaced by an approximate equation of first order. 
\end{rem}
}

\textbf{Error estimator for $I_3$:}
In Remark \ref{rem_IRGNM_Wnormestimate}, we already mentioned that the $W^*$ norm in $I_3$ can be evaluated on a
separate very fine mesh, so that we will neglect the difference between $\norm{A(\qold,\uold)-f}_{W^*}$ and
$\norm{A(\qold,\uold)-f}_{W_h^*}$. This implies that we do not need to compute the error estimator
$\eta_3$, since $I_3 = I_{3,h}$, so that \eqref{eq_IRGNM_etacond3},  and the first part of \eqref{eq_IRGNM_etacond2}
is trivially fullfilled.

\textbf{Error estimator for $I_4$:}
We also mentioned in Remark \ref{rem_IRGNM_Wnormestimate} that we will not compute $\eta_4$, as the error
$|I_4-I_{4,h}|$ needs to be controlled only through the very weak assumption
$\eta_4^{k}\to0$ as $k\to\infty$ (cf. \eqref{eq_IRGNM_etacond2}), which in practice we will simply make sure by altogether decreasing the mesh size in the course of the iteration. 

\subsubsection{Algorithm}\label{sec_IRGNM_aao_alg}

Since we only know about the existence of an upper bound $\ol\varrho$ of $\normWklein{\zk}$ (cf.
Corollary \ref{cor_IRGNM_adjointbounded}), but not its value, we choose $\varrho$ (cf. \eqref{eq_IRGNM_rhobar}) heuristically, i.e. in each iteration step
we set $\varrho = \varrho_k = \max\{\varrho_{k-1}\,,\normklein{z_{h_k}^k}_{W_{h_k}}\}$ for the discrete counterpart $z_{h_k}^k$ of $\zk$. 

\begin{rem}
Theoretically one should use $\varrho = \normklein{z_{h_k}^k}_{W_{H}}$ on a very fine discretization $H$ in order to get a better approximation to $\normWklein{\zk}$. However, since we only need the correct order of magnitude and not the exact value, we just 
use the current mesh $h_k$. 
\end{rem}

In view of Remark \ref{rem_IRGNM_estu} we omit the  part  ${\frac 1 {\beta_k}} \|u-u_0\|_V^2$ of the regularization term.
Also, as motivated in Section \ref{subsec_estimatorsSQP}, we assume $\eta_3^k=0$ for all $k$, such that
we neither compute $\eta_3$ nor $\eta_4$.

Thus we only check for the condition 
\beq\label{eq_IRGNM_etacond1_algo}
\eta_1^{k} \leq
\left({\tilde{\underline\theta}} - 2\left(2c_{tc}^2 + \frac{(1+2c_{tc})^2}{\tau^2}\right)\right)
I_{3,h}^{k}
\eeq
on $\eta_1^k$ in Assumption \ref{ass_IRGNM_etacond}. 
{
 
For simplicity, we evaluate $I_{3,h}^k$ on the current mesh instead of a very fine mesh as explained in Remark \ref{rem_IRGNM_Wnormestimate}. 
}

For computing $\beta_k$, $h=h_k$ fulfilling \eqref{eq_IRGNM_inexNewton}, we can resort to the Algorithm from \cite{GKV}, which also contains refinement with respect to the quantity of interest $I_{2,h}^k$ and repeated solution of 
                   \be{eq_IRGNM_optprobinalg}
			\min_{(q,v)\in Q_{h} \times V_{h}}
			\|C'(\uoldhk)(v)+C(\uoldhk)-g^\delta\|_G^2+{\frac 1 {\beta_k}}\|q-q_0\|_Q^2
		    \ee
		    \[
		      \text{s.t.}\qquad A'_u(\qoldk,\uoldhk)(v)(\varphi)+A'_q(\qoldk,\uoldhk)(q-\qoldk)(\varphi)
					+ A(\qoldk,\uoldhk)(\varphi) - f(\varphi)=0
		      \qquad \forall \varphi\in W_{h}\,
		    \]
for $(q,v)\in Q_{h} \times V_{h}$.

The presented Generalized Gauss-Newton formulation can be implemented according to the following 
Algorithm \ref{alg_IRGNM_GGN}.
  \begin{algorithm}{Generalized Gauss-Newton Method}
    \begin{algorithmic}[1]\label{alg_IRGNM_GGN}
      \STATE Choose $\tau$, $\tau_\beta$, ${\tilde \tau}_\beta$, $\uttheta$, $\ottheta$ such that $ 0 < \uttheta \le \ottheta <1$ and
  	  Assumption \ref{ass_IRGNM_tauthetacond} holds.
  	  $\tilde\theta = (\uttheta + \ottheta) / 2$ 
           and $\max\{1\,,\tilde{\tau}_\beta\} <\tau_\beta\le \tau$,
  	  and choose $c_1$, $c_2$ and $c_3$, such that the second part of
  	  \eqref{eq_IRGNM_etacond3} is fulfilled.
      \STATE Choose a discretization $h=h_0$ and starting value  $\qhnull (= \qhnullnull)$
        (not necessarily coinciding with $q_0$ in the regularization term)
        and set $\qoldnull=\qhnull$.  
      \STATE  Choose starting value  $\uhnull(=\uhnullnull)$ (e.g. by solving the PDE 
 	$A(\qoldnull,\uoldnull) = f$)
        and set $\uoldnull=\uhnull$.    
      \STATE Compute the adjoint state $\zhnull(=\zhnullnull)$ 
  	  (see \eqref{eq_ggn_Lu}),
  	  evaluate $\norm{\zhnull}_{W_h}$, set $\varrho_0=\normklein{\zhnull}_{W_h}$.
  	  and evaluate $I_{3,h}^0$ (cf. \eqref{eq_IRGNM_aao_I1234kdiscrete}).
      \STATE Set $k=0$ and $h=h^1_0 =h_0$.
      \WHILE {$I_{3,h}^k>\tau^2\delta^2$} \label{refinewhile}
 	  \STATE Set $h = h^1_k$. 
  	  \STATE Solve the optimization problem \eqref{eq_IRGNM_optprobinalg}
  	  \STATE Set $h^2_k = h^1_k$ and $\delta_\beta^2=\tilde \theta I_{3,h}^k$.
   	   \IF{$I_{2,h}^k > \left(\tau_\beta^2 + \frac {{\tilde\tau}_\beta^2} 2\right)\delta_\beta^2$}
 		    \STATE With $\qoldk$, $\uoldk$ fixed, 
 		    apply the Algorithm from \cite{GKV} 
   		    (with quantity of interest $I_2^k$ and noise level $\delta_\beta^2=\tilde \theta I_{3,h}^k$)
 		    starting with the current mesh $h (= h^1_k)$		
 		    to obtain a regularization parameter $\beta_k$ and a possibly different discretization $h^2_k$
  		    such that \eqref{eq_IRGNM_inexNewton} holds; Therwith, also the corresponding 
  		    $\vhk = \vhzweik$, $\qhk = \qhzweik$ according to \eqref{eq_IRGNM_optprobinalg} are computed.  
   	   \ENDIF
   	   \STATE Set $h=h^2_k$.
   	   \STATE Evaluate the error estimator $\eta_1^k$ (cf. \eqref{eq_IRGNM_aao_I1234k},\eqref{eq_IRGNM_aao_I1234kdiscrete}).
   	   \STATE Set $h^3_k=h^2_k$.
 	   \WHILE {\eqref{eq_IRGNM_etacond1_algo} is violated} 
 		  \STATE Refine grid with respect to $\eta_1^k$ such that we obtain a finer discretization
 		  $h^3_k$.
                   \STATE Solve the the optimization problem \eqref{eq_IRGNM_optprobinalg}
      		  and evaluate $\eta_1^k$.
 	   \ENDWHILE
 	   \STATE Set $h=h^3_k$.
 	   \STATE Set $\qoldkplus = q_h^k$, $\uoldkplus = \uoldk+v_h^k$.
 	   \STATE Compute the adjoint state $\zhkplus( =z_{h^3_k}^{k+1})$ 
		    (see \eqref{eq_ggn_Lu}), evaluate 
  		     $\normklein{\zhkplus}_{W_h}$, set $\varrho_k=\max\{\varrho_{k-1}\,,\normklein{\zhkplus}_{W_h}\}$.
  		     and evaluate $I_{3,h}^{k+1}$.
 	   \STATE Set $h^1_{k+1} = h^3_k$ (i.e. use the current mesh as a starting mesh for the next iteration).  	   \STATE Set $k=k+1$. 
       \ENDWHILE   
     \end{algorithmic}
  \end{algorithm}

\begin{rem}\label{rem_IRGNM_ifstattwhile}
In practice, we replace the ``while''-loop on lines 15-19 of Algorithm \ref{alg_IRGNM_GGN} and in the Algorithm from \cite{GKV}
which only serve as refinement loops by an ``if''-condition in order to prevent over-refinement. 
Since we want to either refine or make a Gauss Newton step, lines 15-19 are replaced by
\begin{algorithm}.
  \begin{algorithmic}[1]
   \IF {\eqref{eq_IRGNM_etacond1_algo} is violated}
 		  \STATE Refine grid with respect to $\eta_1^k$ such that we obtain a finer discretization
 		  $h^3_k$
 		  \STATE $h=h^3_k$. 
  \ELSE
   	      \STATE Set $\qoldkplus = q_h^k$, $\uoldkplus = \uoldk+v_h^k$.
  \ENDIF
\end{algorithmic}
\end{algorithm}
\end{rem}

The structure of the loops is the same as in Algorithm 1 from \cite{KKVV13}, 
but here, we only have to solve linear PDEs (i.e. Step 6 in Algorithm 4 in \cite{KKVV13}
is replaced by 
``Solve linear PDE''), which justifies the drawback of one additional loop in comparison to
\cite{KKV10} (see also Algorithm 5 in \cite{KKVV13}.
This motivates the implementation and assumes
the gain of computation time for strongly nonlinear problems, which will be considered in terms of
numerical tests in Section \ref{sec_IRGNM_numericalresults}.

\section{Numerical Results}\label{sec_IRGNM_numericalresults}

For illustrating the performance of the proposed method according to Algorithm \ref{alg_IRGNM_GGN}, 
we apply it to the example PDE
 \[  
   \left\{
    \begin{aligned} 
    - \Delta u + \zeta u^3 & = q & \textnormal{in } \Omega\\
     u & = 0 & \textnormal{on } \partial\Omega 
    \end{aligned}
   \right.\,,
 \]
where we aim to identify the parameter $q\in Q=L^2(\Omega)$ from noisy measurements $g^\delta \in G$ 
of the state $u\in H^1_0(\Omega)$ 
in $\Omega = (0,1)^2 \subset \R^2$, where $\zeta>0$ is a given constant. As for the measurements we consider two cases:
\begin{itemize}
\item[(i)] via point functionals in $n_m$ uniformly distributed points $\xi_i$, $i=1,2,\dots,n_m$ and
perturbed by uniformly distributed random noise of some percentage $p>0$. Then the observation space is chosen as $G=\R^{n_m}$ and 
the observation operator is defined by $(C(v))_i = v(\xi_i)$ for $i=1,\dots, n_m$.
\item[(ii)] via $L^2$-projection. Then $G=L^2(\Omega)$, $C=\operatorname{id}$, and 
\[
  g^\delta = g+\delta \frac{r}{\norm{r}_{L^2(\Omega)}} = g + p\norm{g}_{L^2(\Omega)} \frac {r} {\norm{r}_{L^2(\Omega)}}\,,
\]
where $r$ denotes some uniformly distributed random noise and $p$ the percentage of perturbation. The exact state $u^\dagger$ 
is simulated on a very fine mesh with $1050625$ nodes and equally sized quadratic cells, and we denote the corresponding finite element space by $V_{h_L}$. 
In order to evaluate $\norm{C(u)-g^\delta}_{L^2(\Omega)} = \norm{u-g^\delta}_{L^2(\Omega)}$ on coarser meshes and the 
corresponding finite element spaces $V_{h_l}$ with $l=0,1,\dots,L$ during the optimization algorithm, $g^\delta$ has to
be transferred from $V_{h_L}$ to the current grid $V_{h_l}$. As usual in the finite element context, this is done by the $L^2$-projection
as the restriction operator.
\end{itemize}

We consider configurations with three different exact sources $q^\dagger$:
\begin{itemize}
\item[(a)]
A Gaussian distribution
\[
  q^\dagger = \frac {c} {2\pi \sigma^2} \exp\left(-\frac 12 \left(\left(\frac {sx-\mu} {\sigma}\right)^2 +\left(\frac{sy-\mu}{\sigma}\right)^2 \right)\right) 
\]
with $c=10$, $\mu = 0.5$, $\sigma=0.1$, and $s=2$.
\item[(b)]	Two Gaussian distributions added up to one distribution
\[
  q^\dagger = q_1 + q_2,
\]
where
\[
  q_1 =  \frac {c_1} {2 \pi \sigma^2} \exp \left(-\frac 1 2 \left( \left(\frac {s_1x-\mu} {\sigma} \right)^2 + 
  \left(\frac {s_1y-\mu} {\sigma} \right)^2 \right) \right), 
\]
\[
  q_2 = \frac {c_2} {2 \pi \sigma^2} \exp \left(-\frac 1 2 \left( \left(\frac {s_2x-\mu} {\sigma} \right)^2 + 
  \left(\frac {s_2y-\mu} {\sigma} \right)^2 \right)\right)
\]
with $\sigma = 0.1$, $\mu = 0.5$, $s_1= 2$, $s_2 = 0.8$, $c_1=1$, and $c_2=1$.
\item[(c)]The step function
\[
q^\dagger = \left\{
\begin{array}{cll} 
 0 	 & \mbox{for } x \ge \frac 1 2  \\
 1 	 & \mbox{for } x < \frac 1 2\,. \\ 
\end{array}\right.
\]
\end{itemize}

The concrete choice of the parameters for the numerical tests is as follows: 
$c_{tc} = 10^{-7}$, $\ottheta=0.4999$, $\uttheta = 0.2$, $\tau = 5$, $\tau_\beta = 1.66$, 
$\tilde \tau_\beta = 1$, ($c_2=0.9999$, $c_3 = 0.0001$). The coarsest (starting mesh) consists 
of 25 nodes and 16 equally sized squares, the inital values for the control and the state 
are $q_0=0$ and $u_0=0$ and we start with a regularization parameter $\beta=10$.

Considering the numerical tests, we are mainly interested in saving computation time compared to the Algorithm from \cite{KKV10}, where the inexact Newton
method for the determination of the regularization parameter $\beta$ is applied directly to the nonlinear problem, instead 
of the linearized subproblems \eqref{eq_IRGNM_optprobinalg}. That is why besides the numerical results for the Generalized Gauss-Newton (GGN)
method presented in section \ref{subsec_IRGNM_sqp}, we also present the results from the ``Nonlinear Tikhonov'' (NT) Algorithm from \cite{KKV10}.

The choice of the parameters for (NT) is the following: $\tilde \tau =0.1$, $\underline{\underline\tau} = 3.1$, 
$\tau = 4$, $\overline{\overline \tau} = 5$, $c_{tc}=10^{-7}$, $c_1=0.9$, $c_2=0.4$. This setting implies that both algorithms (NT) and (GGN) are 
stopped, if the concerning quantities of interest fall below the same bound ($\overline{\overline \tau}^2\delta^2$ for (NT) and
$\tau^2\delta^2$ for (GGN)).

\begin{figure}[htbp]
\hspace*{-2cm}
  \begin{minipage}[b]{4.5cm}
    \includegraphics[scale=0.3]
    {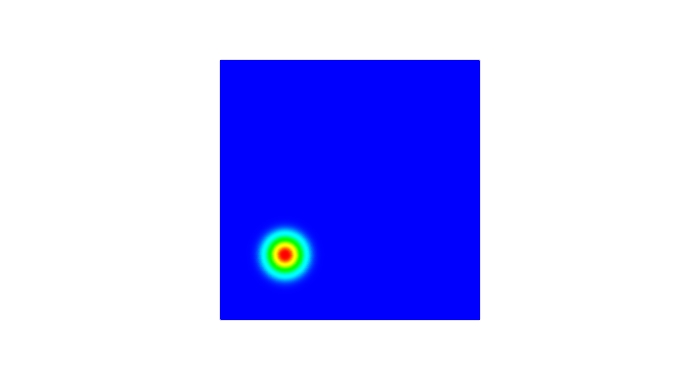}
  \end{minipage}
  \hspace{5mm}
  \begin{minipage}[b]{4.5cm}
    \includegraphics[scale=0.3]
    {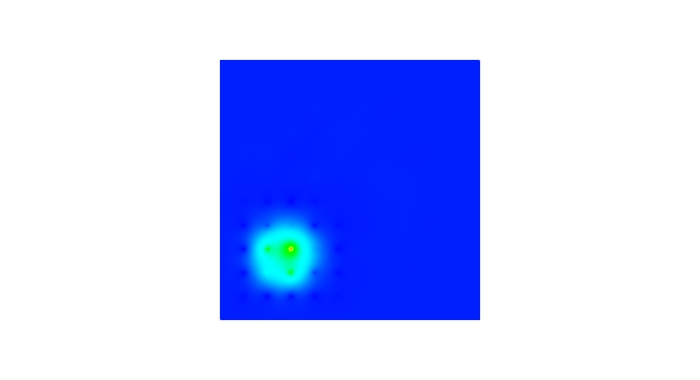}
  \end{minipage}
  \hspace{5mm}
  \begin{minipage}[b]{4.5cm}
    \includegraphics[scale=0.3] 
    {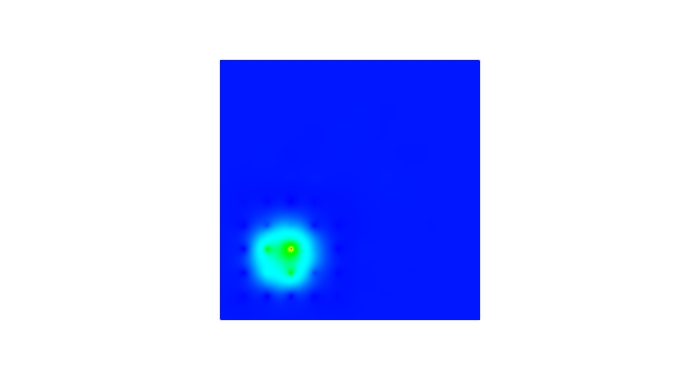}
  \end{minipage}
  \caption{FLTR: exact control $q^\dagger$, reconstructed control by NT, reconstructed control by GGN
    for example (a) (i) with $\zeta=100$, $1\%$ noise}
  \label{fig:GaussqPointzeta=100}
\end{figure}

\begin{figure}[htbp]
\hspace*{-2cm}
  \begin{minipage}[b]{4.5cm}
    \includegraphics[scale=0.3]
    {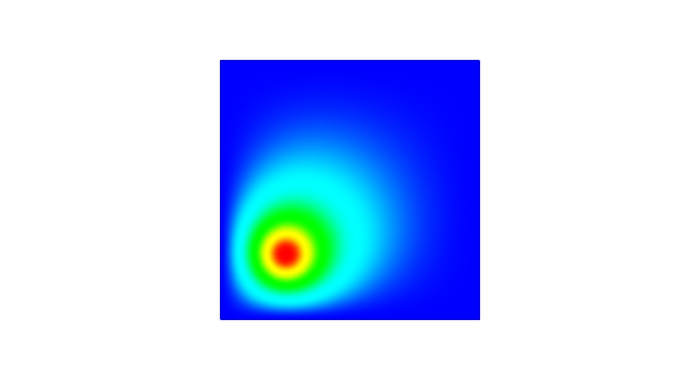}
  \end{minipage}
  \hspace{5mm}
  \begin{minipage}[b]{4.5cm}
    \includegraphics[scale=0.3]
    {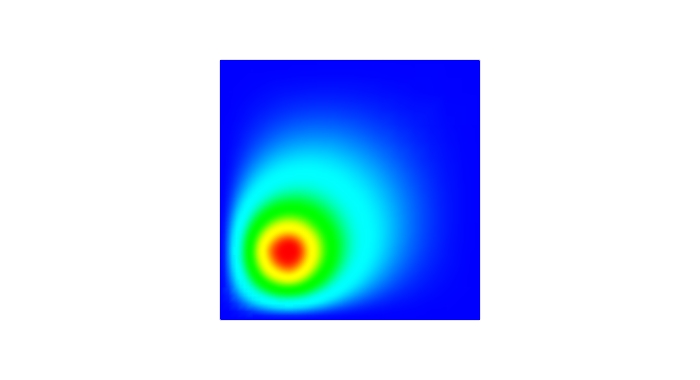}
  \end{minipage}
  \hspace{5mm}
  \begin{minipage}[b]{4.5cm}
    \includegraphics[scale=0.3] 
    {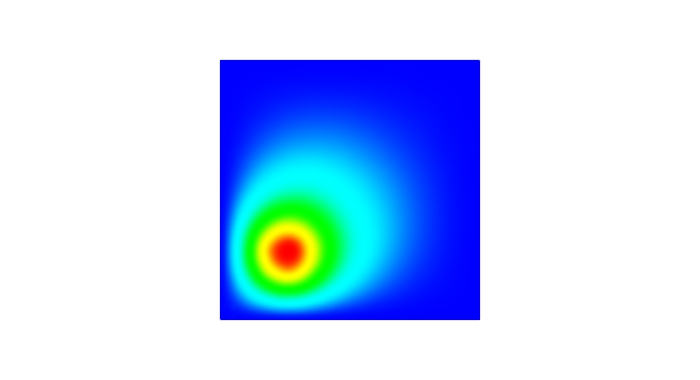}
  \end{minipage}
  \caption{FLTR: exact state $u^\dagger$, reconstructed state by NT, reconstructed state by GGN
    for example (a) (i) with $\zeta=100$, $1\%$ noise}
  \label{fig:GaussuPointzeta=100}
\end{figure}


\begin{figure}[htbp]  
  \hspace{53mm}
  \begin{minipage}[b]{4cm}
    \includegraphics[scale=0.16]
    {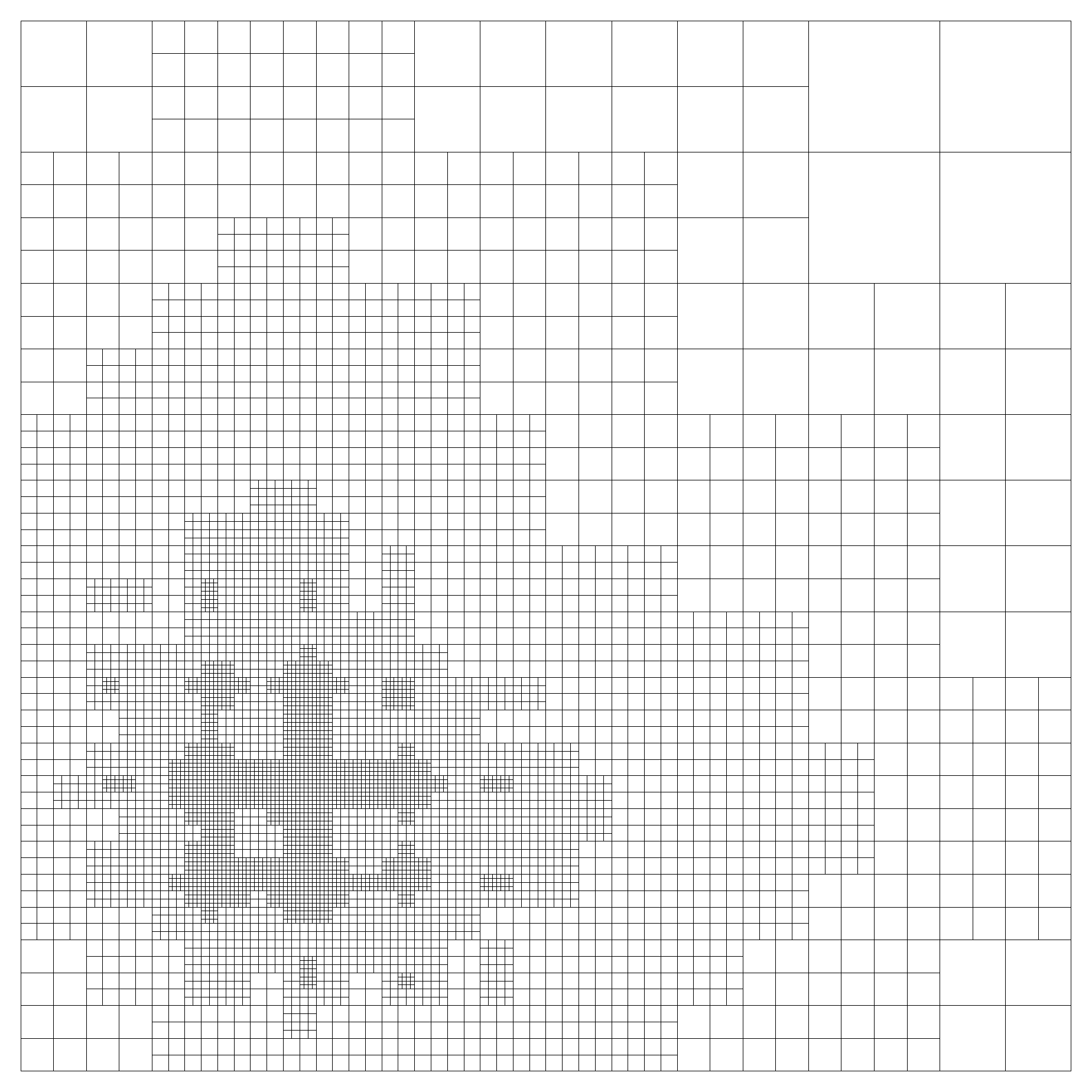}
  \end{minipage}
  \hspace{10mm}
  \begin{minipage}[b]{4cm}
    \includegraphics[scale=0.16] 
    {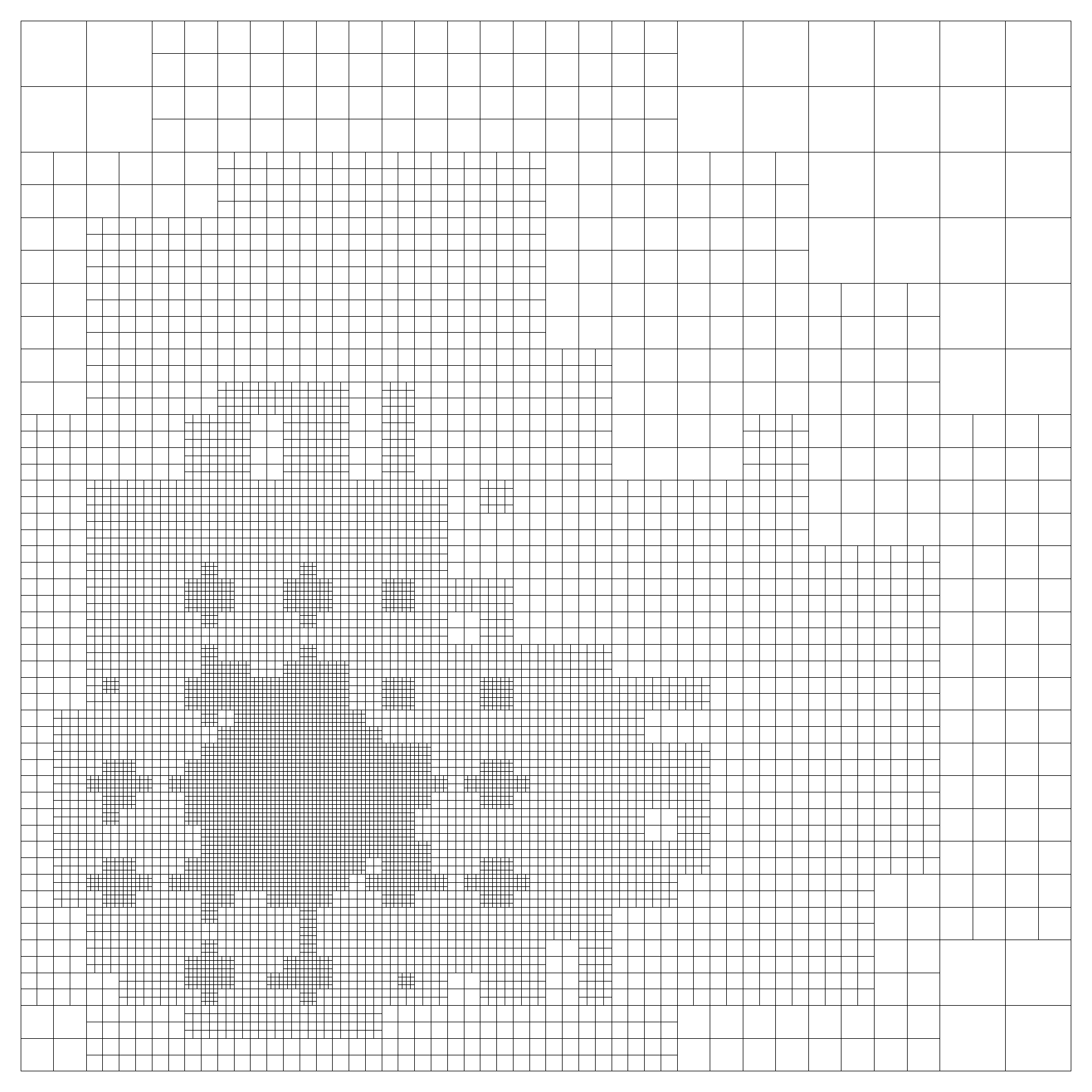}
  \end{minipage}
  \caption{FLTR: adaptively refined mesh by NT , adaptively refined mesh by     GGN
    for example (a) (i) with $\zeta=100$, $1\%$ noise}
  \label{fig:GaussmeshPointzeta=100}
\end{figure}

The figures \ref{fig:GaussqPointzeta=100} and \ref{fig:GaussuPointzeta=100} show the exact source distribution $q^\dagger$ and the corresponding simulated state 
$u^\dagger$, as well as the reconstructions of the control and the state obtained by Algorithm \ref{alg_IRGNM_GGN} (GGN), 
as well as the ones obtained by the algorithm from \cite{KKV10} (NT) for the example (a)(i) with $\zeta=100$ and $1\%$ noise. In Figure \ref{fig:GaussmeshPointzeta=100} we see the very fine mesh for simulating the data, 
the adaptively refined mesh obtained by (NT), and the adaptively refined mesh obtained by (GGN). 

In table \ref{tab:zetas} we present the respective results for different choices of $\zeta$ (first column). In the second and fifth column one can see the relative 
control error $\frac {\norm{q_h^{k_*} - q^\dagger}_Q^2} {\norm{q^\dagger}_Q}$, in the third and sixth column is the number of nodes in the adaptively refined mesh, 
and in the forth and seventh column one can see the regularization parameter obtained by (GGN) and (NT) respectively. The eigthth column shows the 
gain of computation time using (GGN) instead of (NT). The higher the factor $\zeta$ is, the more computation time we save with (GGN). This is probably due 
to the higher number of iterations needed for ``more nonlinear'' problems. Already for the choice $\zeta=100$ replacing the nonlinear PDEs by linear ones 
(getting an additional loop in return cf. subsection \ref{sec_IRGNM_aao_alg}) seems to pay off. In that case (GGN) refines more than (NT)
(see Figure \ref{fig:GaussmeshPointzeta=100}), but it is still faster (see table \ref{tab:zetas}). For higher $\zeta=500$ and $\zeta=1000$ (GGN)
is even much faster than (NT), because in addition to the cheaper linear PDEs, it also refines less. At the same time, the relative control error 
is about the same as with (NT).

\begin{table}
  \caption{Algorithm \ref{alg_IRGNM_GGN} (GGN) versus the algorithm from \cite{KKV10} (NT) 
  for Example (a)(i) for different choices of $\zeta$ with $1\%$ noise. CTR: Computation time reduction using (GGN) in comparison to (NT)}
  \label{tab:zetas}
  \centering
  \begin{tabular}{@{}rrrrrrrr@{}}
    \toprule
    \multicolumn{1}{@{}c}{$\zeta$}
    &\multicolumn{3}{c@{}}{NT}
    &\multicolumn{3}{c@{}}{GGN}
    &\multicolumn{1}{@{}c}{CTR} \\
    \cmidrule(r){1-1}\cmidrule(lr){2-4}\cmidrule(lr){5-7}\cmidrule(l){8-8}
    &  error & $ \beta$ & \# nodes &   error & $ \beta$ & \# nodes & \\   
    \cmidrule(r){2-2}\cmidrule(lr){3-3}\cmidrule(lr){4-4}\cmidrule(lr){5-5}\cmidrule(lr){6-6}\cmidrule(l){7-7}
    1 & 0.418  & 2985 & 2499 & 0.412 & 4600 & 3873 & -65\% \\
    10 & 0.417  & 3194 & 2473 & 0.411 & 4918 & 3965 & -59\% \\
    100 & 0.408  & 5014 & 6653 & 0.417 & 6773 & 9813 & 39\% \\
    500 & 0.418  & 9421 & 11851 & 0.404 & 13756 & 821 & 97\% \\
    1000 & 0.439  & 11486 & 44391 & 0.426 & 16355 & 793 & 99\% \\  
    \bottomrule
  \end{tabular}
\end{table}

In table \ref{tab:differentnoise} the reader can see the results for the same example with $\zeta=100$ for different noise levels using (GGN). 
The numerical results confirm what we would expect: the larger the noise, the larger the error, the stronger the 
regularization, the coarser the discretization. 

\begin{table}[t]
  \centering
  \caption{Example (a)(i) for different noise levels with $\zeta=100$}\label{tab:differentnoise}
  {\footnotesize
  \begin{tabular}{@{}lrrr@{}}
    \toprule
    \multicolumn{1}{@{}c}{noise}&\multicolumn{1}{@{}c}{error}&\multicolumn{1}{@{}c}{$ \beta$}&\multicolumn{1}{@{}c}{\# nodes} \\
    \cmidrule(r){1-1}\cmidrule(lr){2-2}\cmidrule(lr){3-3}\cmidrule(l){4-4}
    0.5\%   & 0.385  & 12260 & 9867 	    \\
    1\%   & 0.417  &   6773 & 9813    \\
    2\%   & 0.553  &    1687 & 413   \\
    4\%   &  0.700 &  599 & 137	      \\    
    8\%	  & 0.937 &  42 & 137\\
    \bottomrule
  \end{tabular}}
\end{table}

\begin{figure}[htbp]
\centering
    \includegraphics[scale=0.7]
    {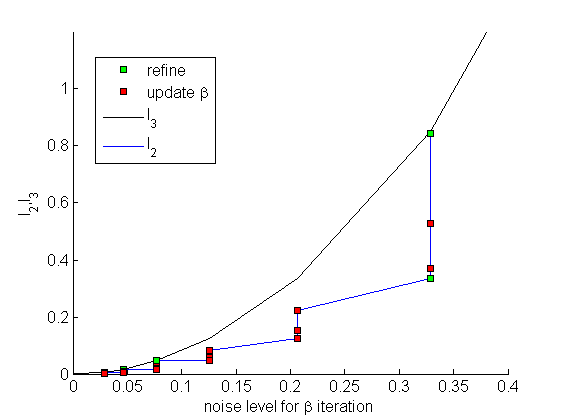}
   \label{fig:ggn}
   \caption{Behavior of (GGN) for example (a)(i) with $\zeta=100$ and $1\%$ noise}
\end{figure}

Taking a look at Figure \ref{fig:ggn} the reader can track the behavior of Algorithm \ref{alg_IRGNM_GGN} (GGN)
for the considered example (a)(i) with $\zeta=100$ and $1\%$ noise. The algorithm goes from right to left 
  in Figure \ref{fig:ggn}, where the quantities of interest $I_2$ and $I_3$ (or rather their discrete counterparts $I_{3,h}$ 
  and $I_{2,h}$) are rather large. The noise level for the inner iteration $\tilde \theta I_{3,h}$ is about $0.52$ in the beginning.
  For this noise 
  level the stopping criterion for the $\beta$-algorithm (step 10,11 in Algorithm \ref{alg_IRGNM_GGN})
  is already fulfilled, such that only one Gauss-Newton step is made without refining or updating $\beta$.
  This decreases the noise level $\tilde \theta I_{3,h}$ to about $0.33$. Then the $\beta$-algorithm comes into play, 
  with one refinement step, two $\beta$-steps and again one refinement step, which in total reduces $I_2$
  from $0.90$ to $0.33$, with which the $\beta$-algorithm terminates. The subsequent run of the $\beta$-algorithm
  consists only of three $\beta$-enlargement steps and finally after $7$ Gauss-Newton iterations, both quantites 
  of interest $I_2$ and $I_3$ fulfill the required smallness conditions such that the whole Gauss-Newton Algorithm terminates.

Due to the observation above concerning the nonlinearity of the PDE, 
we restrict our considerations to the case $\zeta=1000$ for the rest of this section. 
The figures \ref{fig:GaussqL2zeta=1000}, \ref{fig:GaussuL2zeta=1000}, and \ref{fig:GaussmeshL2zeta=1000} 
again show the results for example (a) with $1\%$ noise, but for the case (ii), i.e. via $L^2$-projection.

\begin{figure}[htbp]
\hspace*{-2cm}
  \begin{minipage}[b]{4.5cm}
    \includegraphics[scale=0.3]
    {GaussEcke_exactControl.jpg}
  \end{minipage}
  \hspace{5mm}
  \begin{minipage}[b]{4.5cm}
    \includegraphics[scale=0.3]
    {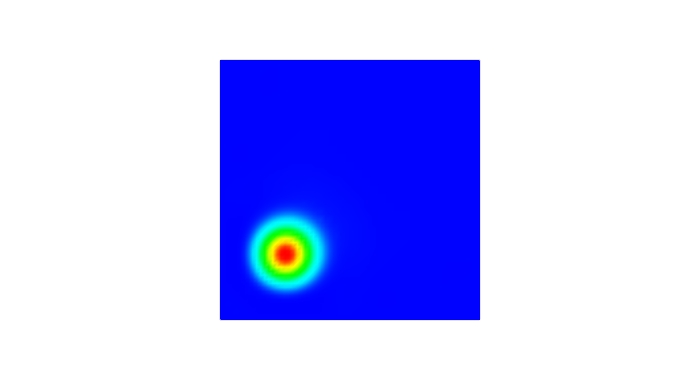}
  \end{minipage}
  \hspace{5mm}
  \begin{minipage}[b]{4.5cm}
    \includegraphics[scale=0.3] 
    {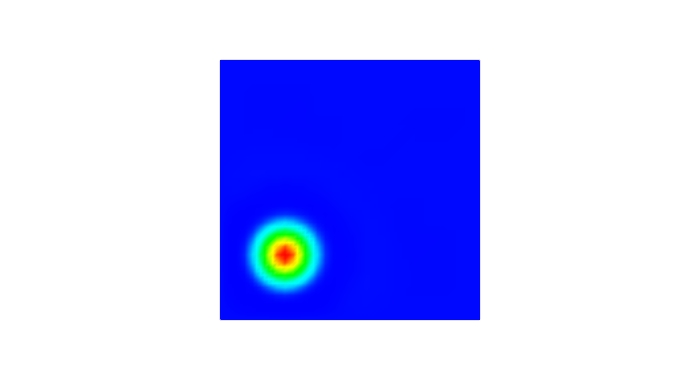}
  \end{minipage}
  \caption{FLTR: exact control $q^\dagger$, reconstructed control by NT, reconstructed control by GGN
    for example (a) (ii) with $\zeta=1000$, $1\%$ noise}
  \label{fig:GaussqL2zeta=1000}
\end{figure}

\begin{figure}[htbp]
\hspace*{-2cm}
  \begin{minipage}[b]{4.5cm}
    \includegraphics[scale=0.3]
    {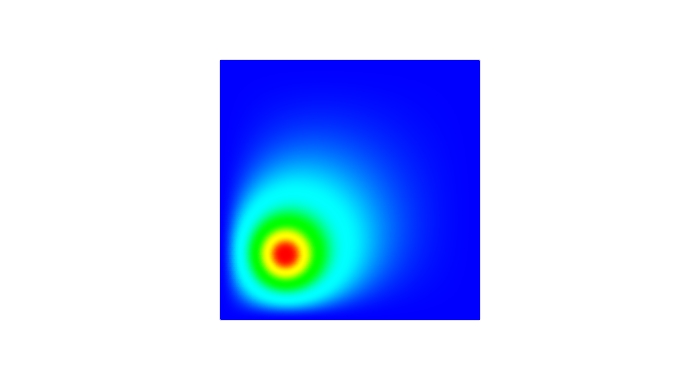}
  \end{minipage}
  \hspace{5mm}
  \begin{minipage}[b]{4.5cm}
    \includegraphics[scale=0.3]
    {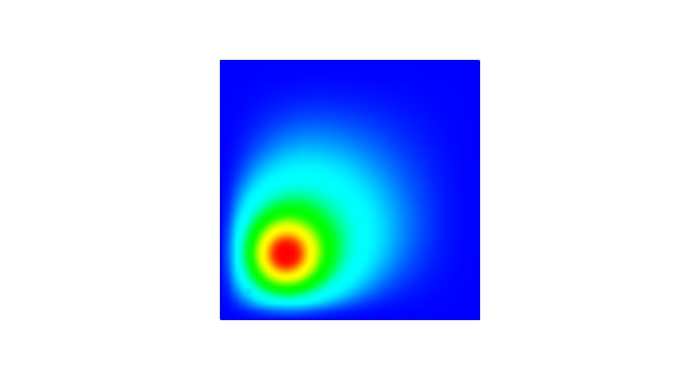}
  \end{minipage}
  \hspace{5mm}
  \begin{minipage}[b]{4.5cm}
    \includegraphics[scale=0.3] 
    {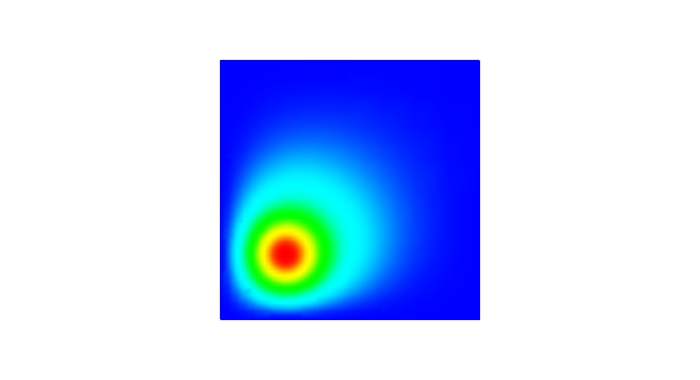}
  \end{minipage}
  \caption{FLTR: exact state $u^\dagger$, reconstructed state by NT, reconstructed state by GGN
    for example (a) (ii) with $\zeta=1000$, $1\%$ noise}
 \label{fig:GaussuL2zeta=1000}
\end{figure}
%

\begin{figure}[htbp]  
\hspace{53mm}
  \begin{minipage}[b]{4cm}
    \includegraphics[scale=0.16]
    {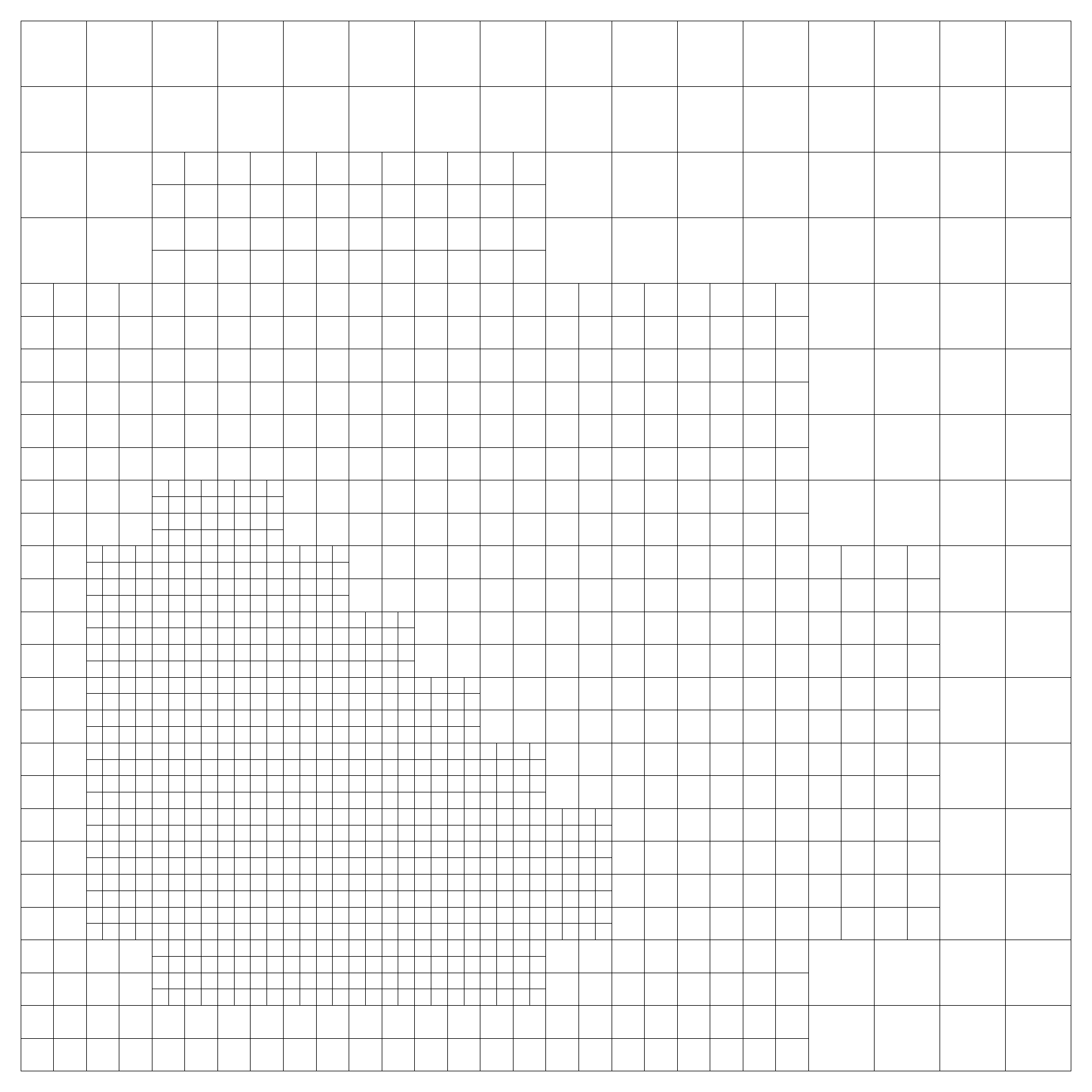}
  \end{minipage}
  \hspace{10mm}
  \begin{minipage}[b]{4cm}
    \includegraphics[scale=0.16] 
    {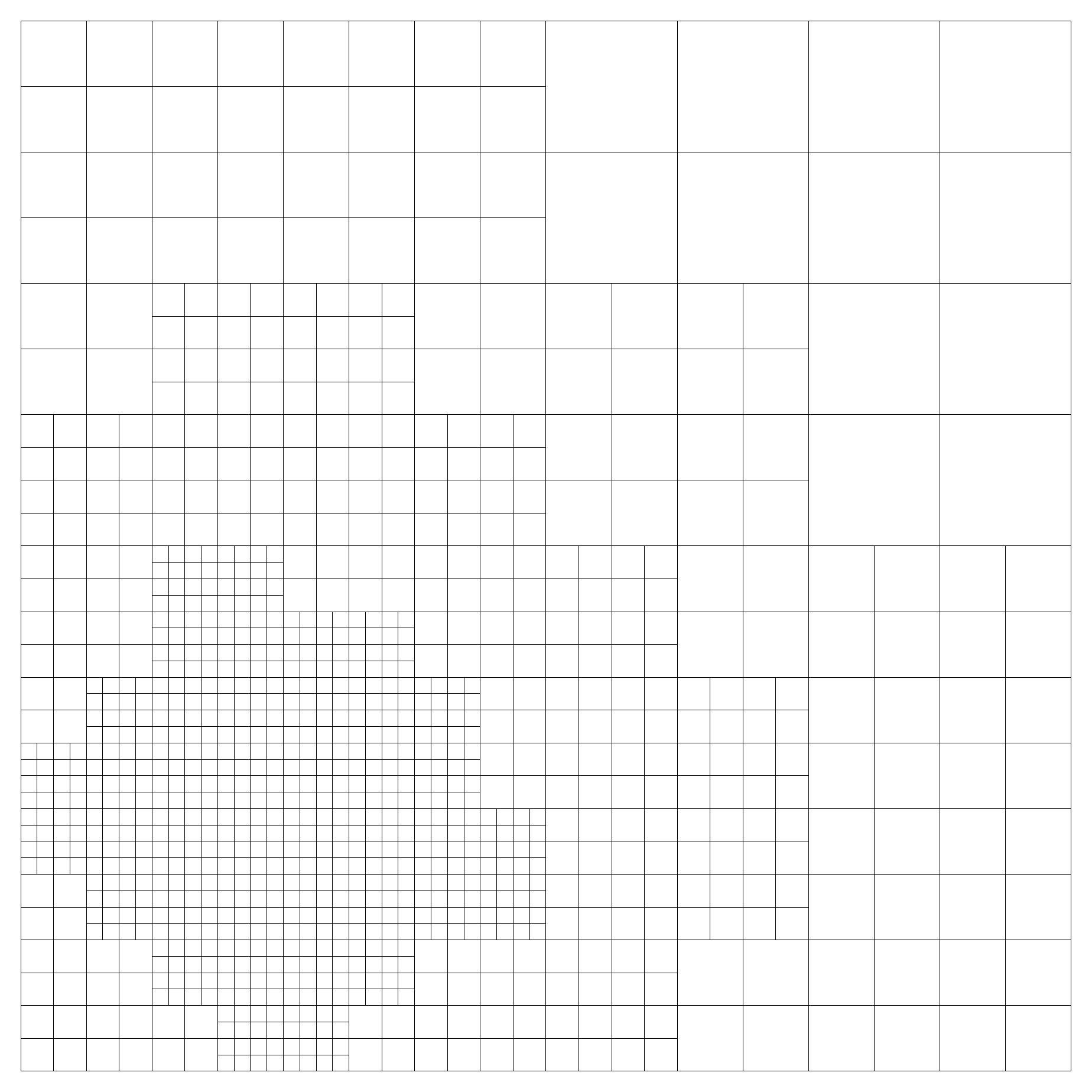}
  \end{minipage}
  \caption{FLTR: adaptively refined mesh by NT , adaptively refined mesh by     GGN
    for example (a) (ii) with $\zeta=1000$, $1\%$ noise}
  \label{fig:GaussmeshL2zeta=1000}
\end{figure}

(GGN) yields a regularization parameter $\beta = 4400806$, a discretization with $1125$ nodes and a relative control error of $0.268$. 
(NT) leads to a much larger error of $1.472$, a finer discretization with $1405$ nodes and a much larger regularization parameter $\beta = 60875207$.
Although (GGN) refines only a little less than (NT), (GGN) is much faster than (NT), namely $81\%$. 
Compared to the point measurement evaluation, the $L^2$-projection causes smoother solutions, which seem to reconstruct the exact data better, 
but at the same time this is probably the less realistic case with respect to real applications.

In the figures \ref{fig:HuetchenqPointzeta=1000} and \ref{fig:zweiHuetchenmeshPointzeta=1000}, we can see the results using (GGN) and (NT) 
for a different source, namely example (b) with point measurements (i) and again $\zeta=1000$ and $1\%$ noise. Since we are interested in idenfying the parameter 
$q$, we take a pass on presenting the reconstructed states and only show the reconstructed controls, as well as the adaptively refined meshes. 

\begin{figure}[htbp]
\hspace*{-2cm}
  \begin{minipage}[b]{4.5cm}
    \includegraphics[scale=0.3]
    {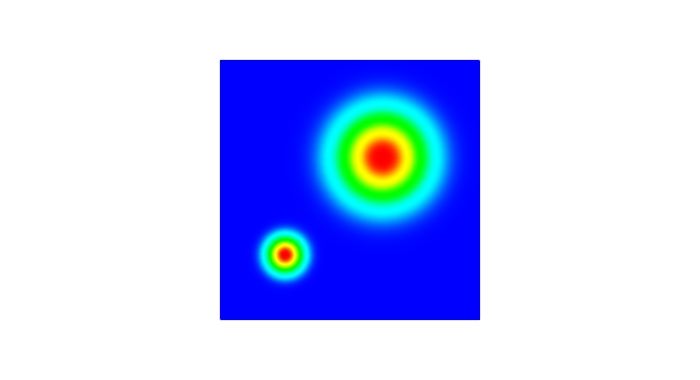}
  \end{minipage}
  \hspace{5mm}
  \begin{minipage}[b]{4.5cm}
    \includegraphics[scale=0.3]
    {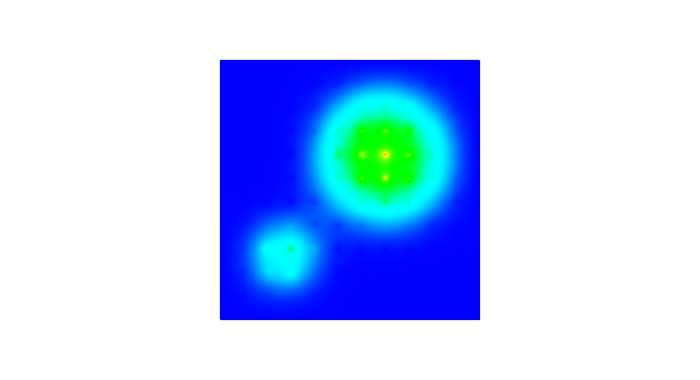}
  \end{minipage}
  \hspace{5mm}
  \begin{minipage}[b]{4.5cm}
    \includegraphics[scale=0.3] 
    {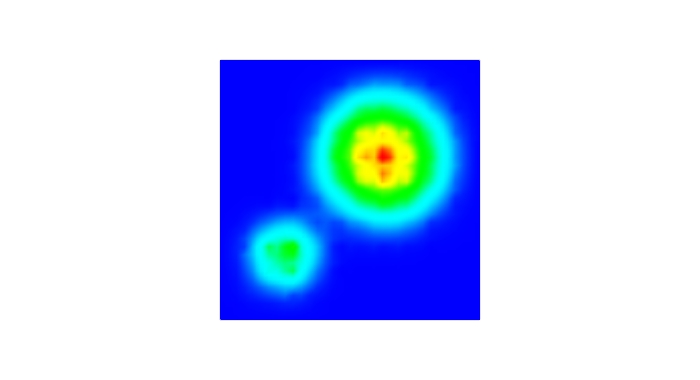}
  \end{minipage}
  \caption{FLTR: exact control $q^\dagger$, reconstructed control by NT, reconstructed control by GGN
    for example (b) (i) with $\zeta=1000$, $1\%$ noise}
  \label{fig:HuetchenqPointzeta=1000}
\end{figure}


\begin{figure}[htbp]  
\hspace{53mm}
   \begin{minipage}[b]{4cm}
    \includegraphics[scale=0.16]
    {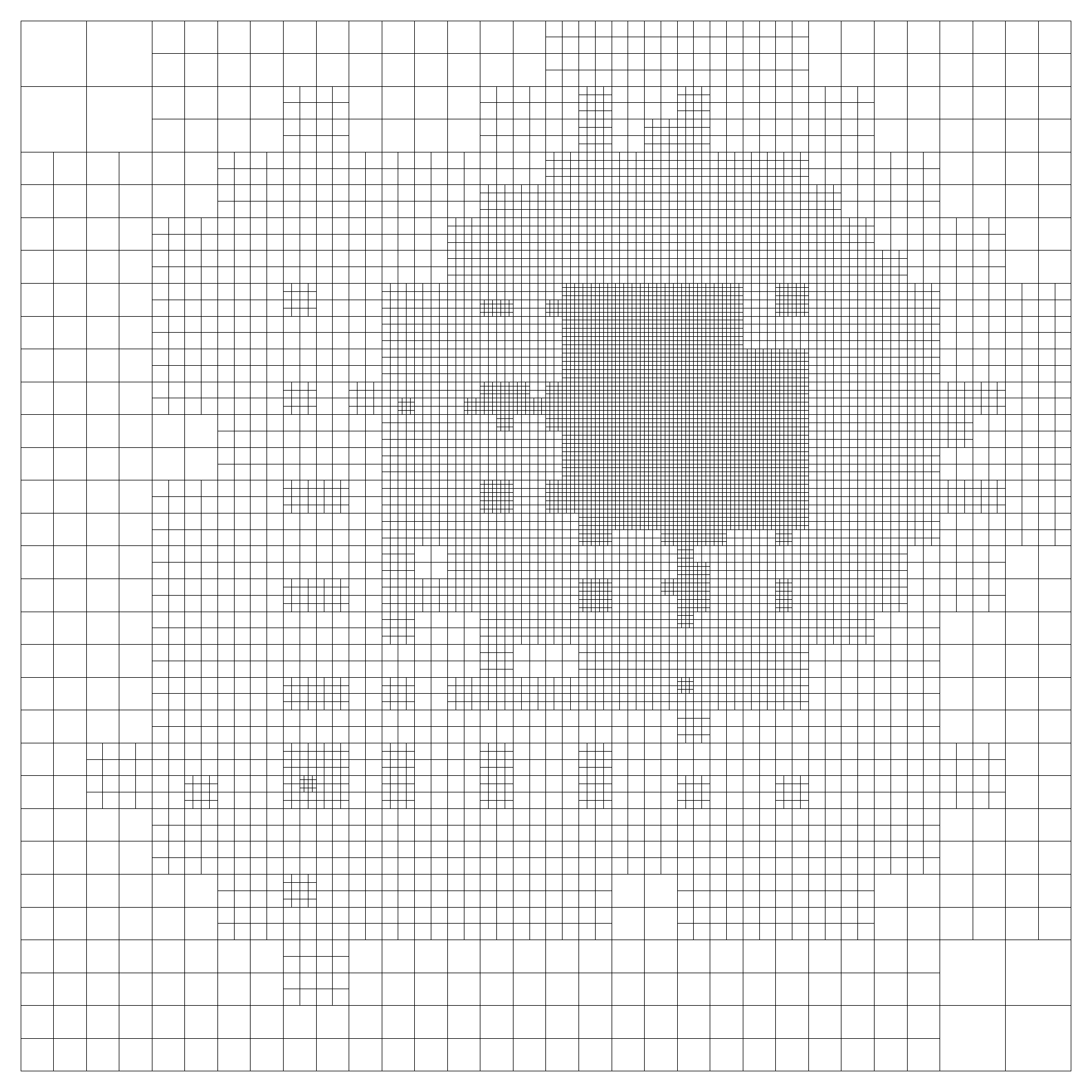}
  \end{minipage}
  \hspace{10mm}
  \begin{minipage}[b]{4cm}
    \includegraphics[scale=0.16] 
    {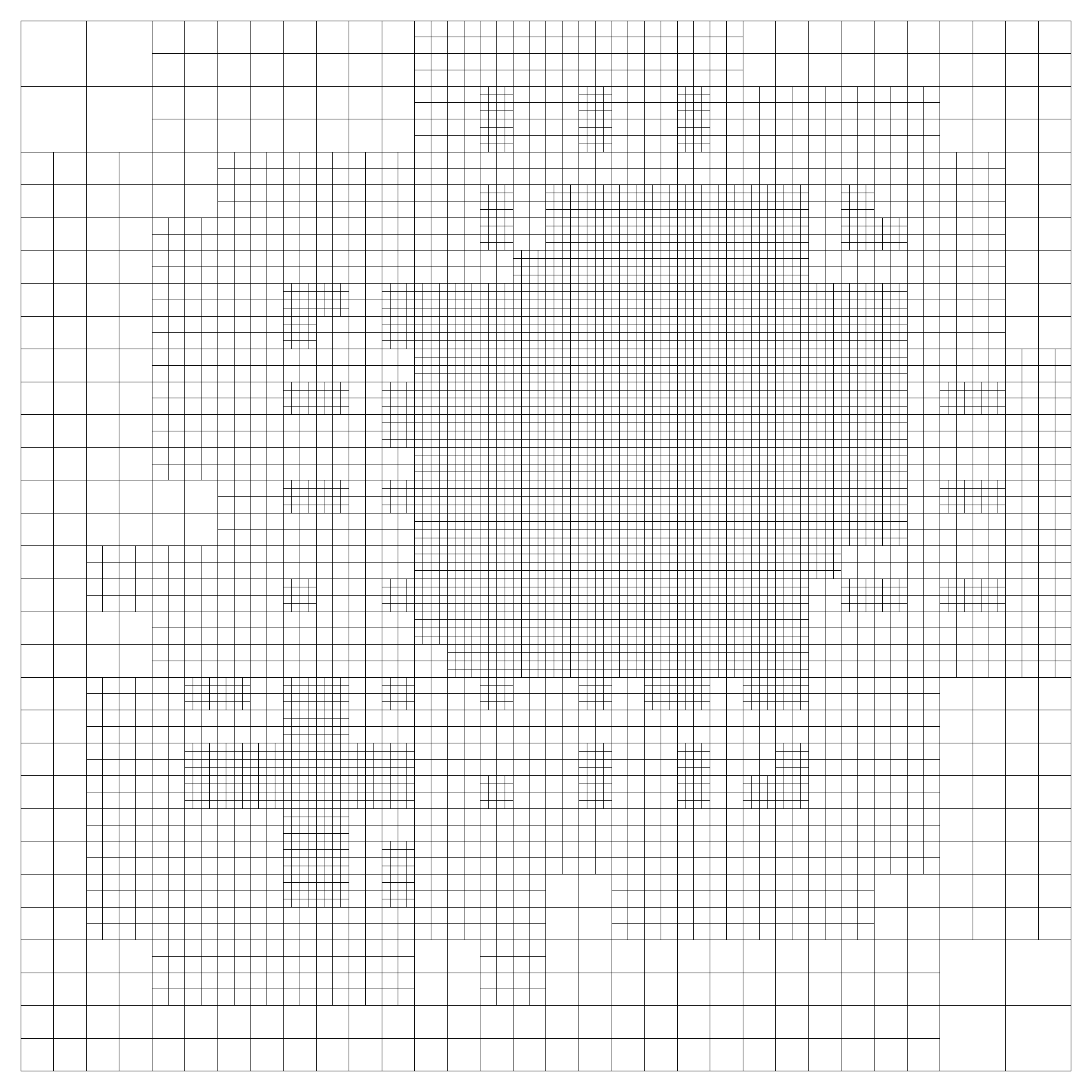}
  \end{minipage}
  \caption{FLTR: exact (very fine) mesh, adaptively refined mesh by NT , adaptively refined mesh by     GGN
    for example (b) (i) with $\zeta=1000$, $1\%$ noise}
  \label{fig:zweiHuetchenmeshPointzeta=1000}
\end{figure}

(GGN) stops with a regularization parameter $\beta=1616$, a mesh with $6697$ nodes, and a reconstruction yielding a relative error of $0.247$, 
whereas (NT) terminates with $\beta = 539$, $10063$ nodes and a larger error of $0.366$. 
Due to the much coarser discretization obtained by (GGN), it is not surprising, that we save about $26\%$ of computation time in this case.

The corresponding results for the source (c) are shown in Figure \ref{fig:SprungPointzeta=1000} and Figure \ref{fig:SprungmeshPointzeta=1000}.

\begin{figure}[htbp]
\hspace*{-2cm}
  \begin{minipage}[b]{4.5cm}
    \includegraphics[scale=0.3]
    {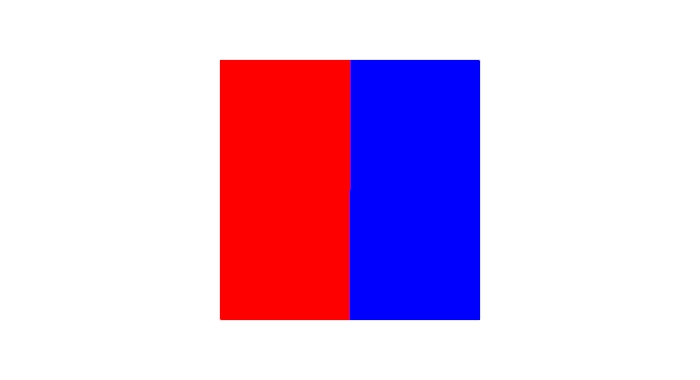}
  \end{minipage}
  \hspace{5mm}
  \begin{minipage}[b]{4.5cm}
    \includegraphics[scale=0.3]
    {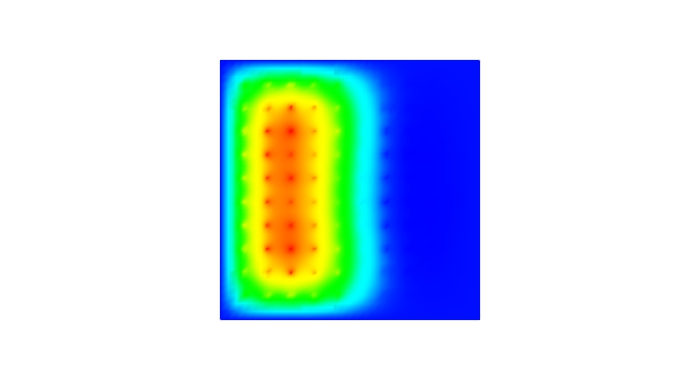}
  \end{minipage}
  \hspace{5mm}
  \begin{minipage}[b]{4.5cm}
    \includegraphics[scale=0.3] 
    {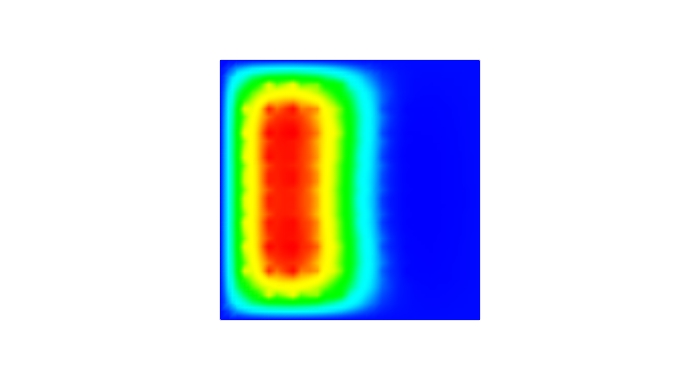}
  \end{minipage}
  \caption{FLTR: exact control $q^\dagger$, reconstructed control by NT, reconstructed control by GGN
    for example (c) (i) with $\zeta=1000$, $1\%$ noise}
  \label{fig:SprungPointzeta=1000}
\end{figure}


\begin{figure}[htbp]  
\hspace{53mm}
  \begin{minipage}[b]{4cm}
    \includegraphics[scale=0.16]
    {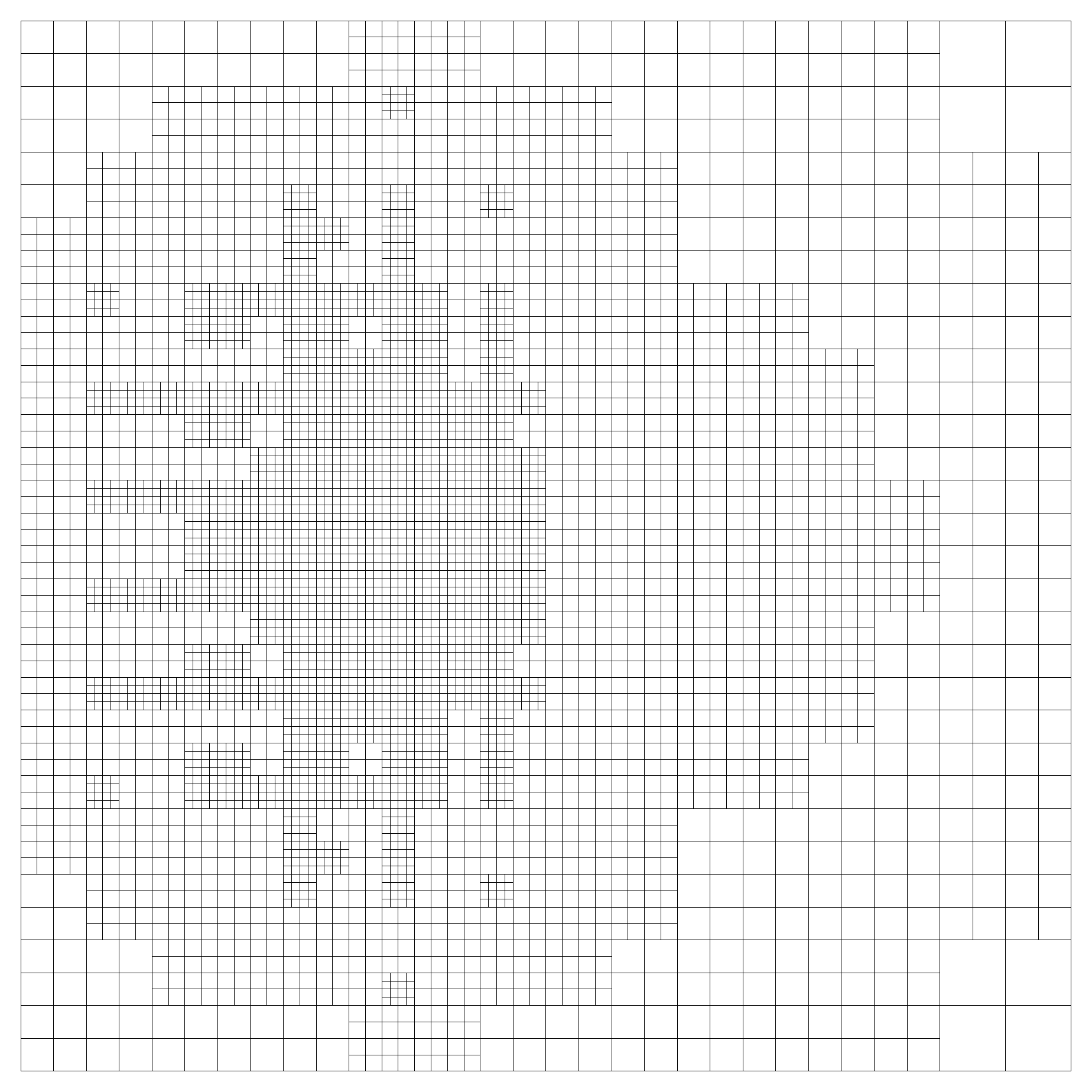}
  \end{minipage}
  \hspace{10mm}
  \begin{minipage}[b]{4cm}
    \includegraphics[scale=0.16] 
    {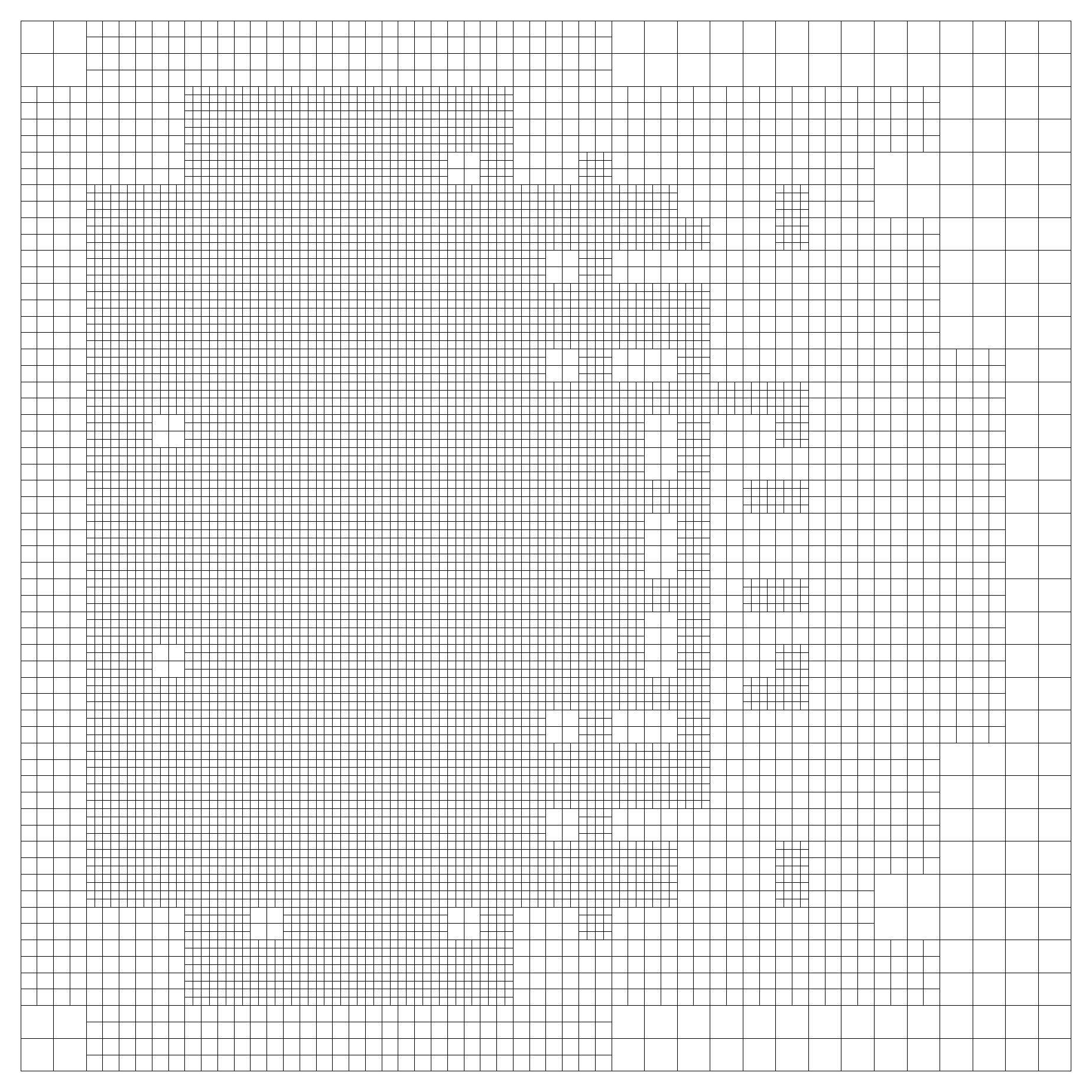}
  \end{minipage}
  \caption{FLTR: exact (very fine) mesh, adaptively refined mesh by NT , adaptively refined mesh by     GGN
    for example (c) (i) with $\zeta=1000$, $1\%$ noise}
  \label{fig:SprungmeshPointzeta=1000}
\end{figure}

Using (GGN) we obtain a regularization parameter $\beta=379$, a discretization with $9565$ nodes and a relative control error of $0.433$, while 
(NT) yields $\beta=24$, $5367$ nodes and an error of $0.615$. Also for this configuration (GGN) is faster than (NT), if only $9.4\%$. 
To put this in perspective, we would like to mention that the step function (c) is a very challenging example, since 
the intial guess $q_0=0$ and the source $q^\dagger$ have different values on the boundary. 
Moreover, for piecewise constant functions total variation regularization is known to yield much better results than $L^2$ regularization.

\section{Conclusions and Remarks}\label{sec_conclusions}
In this paper we consider all-at-once formulations of the iteratively regularized 
Gauss-Newton method and their adaptive discretizations using a posteriori error estimators. 
This allows us to consider only the linearized PDE (instead of the full potentially nonlinear one) 
as a constraint in each Newton step, which safes computational effort. Alternatively, in a least squares approach, 
the measurement equation and the PDE are treated simultaneously via unconstrained minimization of the squared 
residual. In both cases we show convergence and convergence rates which we carry over to the discretized setting 
by controlling precision only in four real valued quantities per Newton step. The choices of the regularization 
parameters in each Newton step and of the overall stopping index are done a posteriori, via a discrepancy type 
principle. From the numerical tests we have seen, that the presented method yields reasonable reconstructions and
can even lead to a large reduction of computation time compared to similar non-iterative methods. 

\section{Acknowledgments}\label{sec_acknowledgments}

The authors would like to thank the Federal Ministry of Education and Research (BMBF) for financial
support within the grant 05M2013 ``ExtremSimOpt: Modeling, Simulation and Optimization of Fluids in Extreme Conditions'',
as well as the German Science Foundation (DFG) for their support within the grant KA 1778/5-1 and VE 368/2-1 ``Adaptive Discretization
Methods for the Regularization of Inverse Problems''.

\section*{Appendix}
For proving convergence rates of the iterates according to \eqref{eq_IRGNM_aao}, we consider source conditions of the form 
\beq\label{source_aao}
\exists (s,v) \in Q\times V \mbox{ s.t. }(q^\dag -q_0,u^\dag-u_0)
 = \kappa(\mathbf{F}'(q^\dag,u^\dag)^*\mathbf{F}'(q^\dag,u^\dag)) (s,v)\,.
\eeq 
with $\kappa=\kappa_\nu$ or $\kappa=\kappa_p$ as in the following Lemma. The case $\kappa=\kappa_\nu$ with $\nu=0$ corresponds to the pure convergence case without rates. 
Using the interpolation inequality and Lemma 3.13 in \cite{HohageDiss} we immediately get the 
following result, that is crucial for convergence and convergence rates.
\begin{lem}\label{lemTalphabeta2}
Under Assumption \ref{ass_auinv} we have for $\alpha>0$ $\mu\in[0,\alpha]$,
 as well as any $\nu\in[0,1]$ and any $p>0$
\beq\label{Talphabeta_nu}
\alpha \left\|\left(\mathbf{T}^*\mathbf{T}+\left(\begin{array}{cc}\alpha I &0\\
0&\mu I \end{array}\right)\right)^{-1}
\kappa_\nu(\mathbf{T}^*\mathbf{T})\right\| 
\leq C_\nu \alpha^\nu
\eeq
\beq\label{Talphabeta_p}
\alpha \left\|\left(\mathbf{T}^*\mathbf{T}+\left(\begin{array}{cc}\alpha I &0\\
0&\mu I \end{array}\right)\right)^{-1}
\kappa_p(\frac{1}{e\|\mathbf{T}\|^2}\mathbf{T}^*\mathbf{T})\right\| 
\leq C_p f_p(\alpha)
\eeq
where
$\kappa_\nu(\lambda)=\lambda^\nu$, $\kappa_p(\lambda)=\ln(\frac{1}{\lambda})^{-p}$, $\lambda\in(0,1/e]$. 
\end{lem}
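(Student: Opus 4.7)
The plan is to reduce to the standard scalar Tikhonov (commuting) estimates by factorising $\bY_{\alpha,\mu}^{-1}$ through the genuinely commuting operator $(\alpha\idQV+\mathbf{T}^*\mathbf{T})^{-1}$, so that the spectral calculus on $\mathbf{T}^*\mathbf{T}$ can be applied directly, and absorbing the resulting ``commutator-type'' prefactor into the bound already supplied by Lemma \ref{lem_IRGNM_Talphamu}(iii). Concretely, writing $D=\operatorname{diag}(\alpha\idQ,\mu\idV)$, I would start from the identity $\alpha\idQV+\mathbf{T}^*\mathbf{T}=\bY_{\alpha,\mu}+(\alpha\idQV-D)$ and premultiply by $\bY_{\alpha,\mu}^{-1}$ to obtain
\[
\bY_{\alpha,\mu}^{-1}=\bigl[\idQV+\bY_{\alpha,\mu}^{-1}(\alpha\idQV-D)\bigr]\,\bigl(\alpha\idQV+\mathbf{T}^*\mathbf{T}\bigr)^{-1}.
\]
Since $\mu\in[0,\alpha]$, the perturbation $\alpha\idQV-D=\operatorname{diag}(0,(\alpha-\mu)\idV)\ge 0$ has operator norm at most $\alpha$, so Lemma \ref{lem_IRGNM_Talphamu}(iii) yields
\[
\bigl\|\idQV+\bY_{\alpha,\mu}^{-1}(\alpha\idQV-D)\bigr\|_{Q\times V}\le 1+\alpha\,\|\bY_{\alpha,\mu}^{-1}\|_{Q\times V}\le 1+c_T(1+\alpha)=:\tilde C,
\]
uniformly in $\mu\in[0,\alpha]$ and $\alpha\in(0,1]$. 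Composing with $\kappa(\mathbf{T}^*\mathbf{T})$ on the right and taking norms therefore gives
\[
\alpha\bigl\|\bY_{\alpha,\mu}^{-1}\,\kappa(\mathbf{T}^*\mathbf{T})\bigr\|_{Q\times V}\le\tilde C\,\alpha\bigl\|(\alpha\idQV+\mathbf{T}^*\mathbf{T})^{-1}\kappa(\mathbf{T}^*\mathbf{T})\bigr\|_{Q\times V}
\]
for any continuous $\kappa$ defined on the spectrum of $\mathbf{T}^*\mathbf{T}$.

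The right-hand side is now a function of the single self-adjoint nonnegative operator $\mathbf{T}^*\mathbf{T}$ and is handled purely spectrally. For $\kappa=\kappa_\nu$ with $\nu\in[0,1]$ the classical scalar Tikhonov maximisation $\sup_{\lambda\ge 0}\alpha\lambda^\nu/(\alpha+\lambda)=\nu^\nu(1-\nu)^{1-\nu}\alpha^\nu\le\alpha^\nu$ delivers \eqref{Talphabeta_nu} with $C_\nu=\tilde C$. For $\kappa=\kappa_p$ the corresponding scalar bound $\sup_{\lambda\in(0,\|\mathbf{T}\|^2]}\alpha\,\kappa_p\!\bigl(\lambda/(e\|\mathbf{T}\|^2)\bigr)/(\alpha+\lambda)\le C_p f_p(\alpha)$ is precisely Lemma 3.13 in \cite{HohageDiss}, yielding \eqref{Talphabeta_p}; this is the interpolation step referred to in the statement, since $f_p(\alpha)$ is obtained by interpolating the polynomial bounds $\alpha^\nu$ against the logarithmic index function $\kappa_p$.

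\textbf{Main obstacle.} The only genuine difficulty — and the reason one cannot just invoke the scalar Tikhonov estimate for $\bY_{\alpha,\mu}$ itself — is that $D$ does not commute with $\mathbf{T}^*\mathbf{T}$, so $\bY_{\alpha,\mu}$ is not a function of $\mathbf{T}^*\mathbf{T}$ and spectral calculus on $\bY_{\alpha,\mu}$ is unavailable. The factorisation above sidesteps this by pushing the non-commutativity into the uniformly bounded prefactor $\idQV+\bY_{\alpha,\mu}^{-1}(\alpha\idQV-D)$, whose control is the genuinely nontrivial ingredient already carried out via the Schur-complement computation of Lemma \ref{lem_IRGNM_Talphamu}, leaving a purely commuting Tikhonov remainder to which the scalar estimates of the classical setting and of \cite[Lemma~3.13]{HohageDiss} apply verbatim.
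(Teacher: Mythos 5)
Your proposal is correct, and it is in fact more detailed than what the paper offers: the authors give no proof of Lemma \ref{lemTalphabeta2} at all, but only the one-sentence attribution that it follows ``using the interpolation inequality and Lemma 3.13 in \cite{HohageDiss}'', i.e.\ they invoke exactly the two scalar ingredients you use in your last step (the elementary maximisation $\sup_{\lambda\ge 0}\alpha\lambda^\nu/(\alpha+\lambda)\le\alpha^\nu$ for $\kappa_\nu$, and Hohage's logarithmic estimate for $\kappa_p$), while leaving the treatment of the non-commuting block $D=\operatorname{diag}(\alpha\idQ,\mu\idV)$ tacit. What you do differently is to supply precisely that missing operator-theoretic reduction: the factorisation $\bY_{\alpha,\mu}^{-1}=\bigl[\idQV+\bY_{\alpha,\mu}^{-1}(\alpha\idQV-D)\bigr](\alpha\idQV+\mathbf{T}^*\mathbf{T})^{-1}$ pushes the non-commutativity into a prefactor controlled by Lemma \ref{lem_IRGNM_Talphamu}(iii), with $\mu\in[0,\alpha]$ entering only through $\|\alpha\idQV-D\|\le\alpha$, and leaves a purely commuting Tikhonov expression to which the spectral calculus applies; this yields a self-contained argument and makes explicit where Assumption \ref{ass_auinv} (through $c_T$) is needed. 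The only caveat you should state is that your constant $\tilde C=1+c_T(1+\alpha)$ is uniform only on bounded ranges of $\alpha$ (e.g.\ $\alpha\in(0,1]$, which is also the range in which Lemma \ref{lem_IRGNM_Talphamu}(iii) is formulated), whereas the lemma is nominally stated for all $\alpha>0$; since the convergence analysis only uses $\alpha_k=1/\beta_k\to 0$ and $f_p(\alpha)$ is in any case only meaningful for small $\alpha$, this restriction is harmless, but it should be made explicit.
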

Therewith, the following convergence and convergence rates result 
with a priori chosen sequence $\alpha_k$ and stopping index $k_*$ follow directly along the lines 
of the proofs of Theorem 2.4 in \cite{BNS94} and Theorem 4.7 in \cite{HohageDiss}, see also 
Theorem 4.12 in \cite{KNSBuch}:
\begin{thm}
Let $\beta_k$ be a positive sequence decreasing monotonically to zero and satisfying 
\[
  \sup_{k\in\mathbb{N}}\frac{\beta_{k+1}}{\beta_{k}}<\infty\,,
\] 
and let $k_*=k_*(\delta)$ be chosen according to 
\[
k_*\to\infty\qquad\mbox{and}\qquad\eta\ge\delta\beta_{k_*}^{\frac12}\to 0\;\;\mbox{as}\;\;
 \delta\to 0\,
\]
and
\[
 \eta\beta_{k_*}^{-\nu-\frac12}\le\delta<\eta\beta_k^{-\nu-\frac12}\,,\qquad 0\le k<k_*\,,
\]
in case of \eqref{source_aao} with $\kappa(\lambda)=\lambda^\nu$, $\nu\in(0,1]$ 
or 
\[
 \frac{\eta}{\beta_{k_*}}\le\delta<\frac{\eta}{\beta_k}\,,\qquad 0\le k<k_*\,,
\]
in case of \eqref{source_aao} with $\kappa(\lambda)=\ln(\frac{1}{\lambda})^{-p}$, $p>0$ respectively.
\begin{enumerate}
\item If \eqref{source_aao} holds with $\kappa(\lambda)=\lambda^\nu$, $\nu\in[0,\frac12]$ 
or $\kappa(\lambda)=\ln(\frac{1}{\lambda})^{-p}$, $p>0$, we assume that 
\begin{eqnarray*}
 \mathbf{F}'(\tilde{q},\tilde{u}) &=& R((\tilde{q},\tilde{u}),(q,u))\mathbf{F}'(q,u)
 +Q((\tilde{q},\tilde{u}),(q,u)) \\
 \|I-R((\tilde{q},\tilde{u}),(q,u))\| &\le& c_R \\
\|Q((\tilde{q},\tilde{u}),(q,u))\| &\le& c_Q\|\mathbf{F}'(q^\dag,u^\dag)((\tilde{q},\tilde{u})
-(q,u))\| 
\end{eqnarray*}
for all $(q,u),(\tilde{q},\tilde{u})\in \mathcal{B}_{\rho}(q_0,u_0)$ and that
$\|(q^\dag,u^\dag)-(q_0,u_0)\|$, $\|(s,v)\|$, $\eta$, $\rho$, $c_R$ are sufficiently small.
\item If \eqref{source_aao} holds with $\kappa(\lambda)=\lambda^\nu$, $1/2\le\nu\le 1$, we assume that 
\[
	 \|\mathbf{F}'((\tilde{q},\tilde{u}))-\mathbf{F}'(q,u)\|\le L\|(\tilde{q},\tilde{u})-(q,u)\|
\]
for all $(q,u),(\tilde{q},\tilde{u})\in \mathcal{B}_{\rho}(q_0,u_0)$ and 
$\|(q^\dag,u^\dag)-(q_0,u_0)\|$, $\|(s,v)\|$, $\eta$, $\rho$ are sufficiently small.
\end{enumerate}
Then for $(\qksterndelta,\uksterndelta)$ defined by \eqref{eq_IRGNM_aao} (i.e., 
\eqref{eq_IRGNM_aao_var}), 
we obtain convergence $(\qksterndelta,\uksterndelta)\to(q^\dag,u^\dag)$ as $\delta\to0$ and 
convergence rates
\begin{equation}\label{rates0_aao}
\|\qhkstern -q^\dag\|^2+\|\uhkstern -u^\dag\|^2
\leq \frac{\bar{C}^2\delta^2}{\Theta^{-1} \left(\tfrac{\bar{C}}{2\|(s,v)\|}\delta\right)}
= 4\|(s,v)\|^2 \kappa^2(\Theta^{-1}(\tfrac{\bar{C}}{2\|(s,v)\|}\delta))
\end{equation}
where $\Theta(\lambda)\coloneqq\kappa(\lambda)\sqrt{\lambda}$. 
\end{thm}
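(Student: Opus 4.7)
The plan is to follow the classical analysis of the IRGNM for nonlinear ill-posed problems (as in \cite{BNS94,HohageDiss,KNSBuch}), adapted to the all-at-once setting where the forward operator is $\mathbf{F}$ with derivative $\mathbf{T}$ and the regularization acts only on the $q$-component, so that the block weight matrix is $\mathrm{diag}(\beta_k^{-1}\mathrm{id}_Q,0)$. The central resolvent bounds of Lemma \ref{lemTalphabeta2} are tailor-made for precisely this asymmetric configuration ($\alpha=\beta_k^{-1}>0$, $\mu=0$), and the non-regularization of the $u$-component is admissible thanks to the invertibility of $K$ exploited in Lemma \ref{lem_IRGNM_Talphamu}.

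First I would set $e_k \coloneqq \vektor{\qkdelta-q^\dag}{\ukdelta-u^\dag}$ and, subtracting $(q^\dag,u^\dag)^\top$ on both sides of \eqref{eq_IRGNM_aao} and using $\mathbf{F}(q^\dag,u^\dag)=\mathbf{g}$, derive the canonical recursion
\[
e_k = \bY_{\beta_k^{-1},0}^{-1}\mathbf{T}_{k-1}^*\Bigl[\mathbf{T}_{k-1} e_{k-1}-\bigl(\mathbf{F}(q^{k-1},u^{k-1})-\mathbf{F}(q^\dag,u^\dag)\bigr)+(\mathbf{g}^\delta-\mathbf{g})\Bigr] -\bY_{\beta_k^{-1},0}^{-1}\beta_k^{-1}\vektor{q^\dag-q_0}{0},
\]
with $\mathbf{T}_{k-1}=\mathbf{F}'(q^{k-1},u^{k-1})$, by the telescoping identity $\mathrm{id}-\bY_{\beta_k^{-1},0}^{-1}\mathrm{diag}(\beta_k^{-1},0)=\bY_{\beta_k^{-1},0}^{-1}\mathbf{T}_{k-1}^*\mathbf{T}_{k-1}$. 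The last, \emph{approximation} term is controlled by inserting the source representation \eqref{source_aao} for $(q^\dag-q_0,u^\dag-u_0)$ (the vanishing second block of the weight matrix makes the $u$-component harmless), transferring the source factor $\kappa(\mathbf{F}'(q^\dag,u^\dag)^*\mathbf{F}'(q^\dag,u^\dag))$ from $(q^\dag,u^\dag)$ to $(q^{k-1},u^{k-1})$ via items 1 or 2 of the assumptions, and invoking Lemma \ref{lemTalphabeta2} to obtain a bound $C_\nu \beta_k^{-\nu}\|(s,v)\|$ (resp.\ its logarithmic analog).

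Next I would bound the remaining two terms. The \emph{noise} contribution is handled by the spectral inequality $\|\bY_{\alpha,0}^{-1}\mathbf{T}^*\|\le c\,\alpha^{-1/2}$ (which follows from Lemma \ref{lem_IRGNM_Talphamu}(ii) together with $\|\mathbf{T}\bY_{\alpha,0}^{-1}\mathbf{T}^*\|\le 1$), giving a bound of order $\delta\,\beta_k^{1/2}$. For the \emph{nonlinear remainder} $\mathbf{T}_{k-1} e_{k-1}-(\mathbf{F}(q^{k-1},u^{k-1})-\mathbf{F}(q^\dag,u^\dag))$, in the low-smoothness regime ($\nu\in[0,1/2]$ or logarithmic source) the Rieder/Hohage-type factorization of item 1 converts it into a term controlled by $c_R\|e_{k-1}\|$ plus a piece absorbable into the approximation estimate of the previous step; in the high-smoothness regime ($\nu\in[1/2,1]$), the Lipschitz hypothesis of item 2 yields the quadratic bound $L\|e_{k-1}\|^2$, which, combined with the interpolation inequality underlying Lemma \ref{lemTalphabeta2}, feeds back into a recursion dominated by the target rate.

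The main obstacle will be closing the induction: one must simultaneously maintain $e_k \in \mathcal{B}_\rho(q_0,u_0)$ (so that the nonlinearity hypotheses remain valid on all iterates) and the a priori decay $\|e_k\| \le C\bigl(\kappa(\beta_k^{-1})\|(s,v)\|+\delta\beta_k^{1/2}\bigr)$ for every $k \le k_*$. This is exactly where the smallness hypotheses on $\|(q^\dag,u^\dag)-(q_0,u_0)\|$, $\|(s,v)\|$, $\eta$, $\rho$, and $c_R$ enter: they guarantee that the coefficient of $\|e_{k-1}\|$ in the resulting recursion remains strictly below one, so that the induction propagates. Once the bound is established, the a priori stopping rule $\eta\beta_{k_*}^{-\nu-1/2}\le\delta<\eta\beta_k^{-\nu-1/2}$ (resp.\ $\eta/\beta_{k_*}\le\delta<\eta/\beta_k$) balances the approximation error $\kappa(\beta_k^{-1})\|(s,v)\|$ against the propagated noise $\delta\beta_k^{1/2}$ and delivers the announced rate \eqref{rates0_aao} with $\Theta(\lambda)=\kappa(\lambda)\sqrt\lambda$. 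For the pure convergence claim in the case $\kappa\equiv 0$ (no quantitative source assumption), I would conclude by a standard density argument as in \cite[Thm.~4.12]{KNSBuch}: approximate $(q^\dag-q_0,u^\dag-u_0)$ by source-representable elements and pass to the limit using the uniform bound on $\|e_k\|$.
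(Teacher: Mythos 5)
Your overall strategy --- the error recursion obtained from \eqref{eq_IRGNM_aao} via $\operatorname{id}-\bY^{-1}\operatorname{diag}(\beta_k^{-1}\idQ,0)=\bY^{-1}\mathbf{T}^*\mathbf{T}$, the splitting into approximation, noise and nonlinearity parts, Lemma \ref{lemTalphabeta2} for the approximation part, the Rieder/Hohage-type conditions resp.\ Lipschitz bound for the nonlinearity, an induction closed by the smallness assumptions, and the balancing through the a priori stopping rule --- is exactly the route the paper intends: its ``proof'' consists of the reference to Theorem 2.4 in \cite{BNS94}, Theorem 4.7 in \cite{HohageDiss} and Theorem 4.12 in \cite{KNSBuch}, with Lemma \ref{lemTalphabeta2} supplied as the new ingredient for the asymmetric regularization. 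Your noise estimate is correct (though $\|\bY_{\alpha,0}^{-1}\mathbf{T}^*\|\le c\,\alpha^{-1/2}$ follows from $\mathbf{T}^*\mathbf{T}\le\bY_{\alpha,0}$ and Lemma \ref{lem_IRGNM_Talphamu}(iii), not really from Lemma \ref{lem_IRGNM_Talphamu}(ii)), and the nonlinearity and induction parts are the standard ones.

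The genuine gap is in your treatment of the approximation term, which is precisely the point where the $q$-only penalty differs from the classical setting. Since the weight is $\operatorname{diag}(\beta_k^{-1}\idQ,0)$, the term in your recursion is $\alpha\,\bY^{-1}\vektor{q^\dag-q_0}{0}$ with $\alpha=\beta_k^{-1}$, whereas Lemma \ref{lemTalphabeta2} bounds $\alpha\,\|\bY^{-1}\kappa(\mathbf{T}^*\mathbf{T})\|$, i.e.\ what one would need if the penalty acted on the whole pair. Your proposed fix --- insert \eqref{source_aao} after writing $(q^\dag-q_0,0)=(q^\dag-q_0,u^\dag-u_0)-(0,u^\dag-u_0)$ and declare the second piece ``harmless'' because of the vanishing second block of the weight --- fails: by the block formula \eqref{eq_IRGNM_Tinv} with $\mu=0$, the $Q$-component of $\alpha\bY^{-1}(0,u^\dag-u_0)$ is $-\alpha N^{-1}L^*KM^{-1}(u^\dag-u_0)$, and the only available bound is $\|N^{-1}\|_{Q\to Q}\le 1/\alpha$. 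In fact $N-\alpha\idQ$ is, up to bounded factors, the normal operator of the reduced linearized map $C'(u)K^{-1}L$, so $\alpha N^{-1}$ has norm of order one on its ill-posed spectral directions; hence $\alpha\bY^{-1}(0,u^\dag-u_0)$ is generically $O(1)$ and need not tend to zero at all, let alone like $\kappa(\alpha)$. Consequently the claimed bound $C_\nu\beta_k^{-\nu}\|(s,v)\|$ (and with it the rate, and even plain convergence) is not established by your argument. To close this step one has to estimate the coupled quantity $(\operatorname{id}-\bY^{-1}\mathbf{T}^*\mathbf{T})\,\kappa(\mathbf{T}^*\mathbf{T})(s,v)$ as a whole, exploiting the structure of the projection onto the $Q$-block together with the invertibility of $K$ and the joint source element --- a bound that Lemma \ref{lemTalphabeta2} as stated does not provide --- or work with a source representation adapted to the $q$-only penalty. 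The paper does not spell this out either (it defers to the cited proofs), but your sketch makes a concrete claim at exactly this point, and that claim is not correct as stated.
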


\end{document}